   \newtheorem{theorem}{Theorem}
\newtheorem{lemma}[theorem]{Lemma}
\newtheorem{proposition}[theorem]{Proposition}
\newtheorem{defn}[theorem]{Definition}
\def\beq{\begin{equation}}
\def\eeq{\end{equation}}
\def\dist{{\rm dist}}
\def\bB{\mathbf{B}}
\def\bH{\mathbf{H}}
\def\bI{\mathbf{I}}
\def\bg{\mathbf{g}}
\def\bh{\mathbf{h}}
\def\bs{\mathbf{s}}
\def\by{\mathbf{y}}
\def\cA{\mathcal{A}}
\def\cB{\mathcal{B}}
\def\cC{\mathcal{C}}
\def\cD{\mathcal{D}}
\def\cF{\mathcal{F}}
\def\cH{\mathcal{H}}
\def\cK{\mathcal{K}}
\def\cM{\mathcal{M}}
\def\cO{\mathcal{O}}
\def\cR{\mathcal{R}}
\def\cS{\mathcal{S}}
\def\cT{\mathcal{T}}
\def\cV{\mathcal{V}}
\def\cW{\mathcal{W}}
\def\tf{\tilde{f}}
\def\tg{\tilde{g}}
\def\dist{\text{\rm dist}}
\newcommand{\eps}{\varepsilon}
\def\pQ{\partial Q}
\numberwithin{equation}{section}
\begin{document}

\title{Decay of correlations for billiards\\
 with flat points II:
cusps effect}
\author{ Hong-Kun Zhang$^{1}$ }

\date{\today}

\maketitle
\footnotetext[1]{ Department of Mathematics \& Statistics, University of Massachusetts Amherst, MA 01003; Email: hongkun@math.umass.edu}

\begin{abstract}
 In this paper we continue to study billiards with flat points, by constructing a special	 family of  dispersing billiards with cusps. All boundaries of the table have  positive curvature except that the curvature vanishes at  the vertex of  cusps, i.e. the boundaries intersect at the flat point tangentially.
We study the mixing rates of this  one-parameter family of
billiards parameterized by $\beta\in (2,\infty)$, and  show that the
correlation functions of the collision map decay polynomially with
order $\cO(n^{-\frac{1}{\beta-1}})$ as $n\to\infty$. In particular, this solves an open question raised by Chernov and Markarian in  \cite{CM05}.

\end{abstract}

\tableofcontents
\section{Background and the main results.}

Dispersing billiards introduced by Sinai are uniformly hyperbolic
and have strong statistical properties. However, if the billiard table has
cusps or flat points, then its hyperbolicity
is nonuniform and statistical properties deteriorate.

Billiards with flat points	 were constructed and studied by Chernov and Zhang in \cite{CZ2, Z12, Z15}. It was proved that the mixing rates vary between $\cO(1/n)$ and exponentially fast depending on the parameter $\beta\geq 2$, as $n\to\infty$.
The main reason	 is that there exists one periodic trajectory between two flat points, which acts as a trap to slow down the mixing rates of nearby trajectories. In \cite{Z12,Z15},  statistical properties of a semidispersing billiard with flat points on the convex boundary were investigated. The decay rates were proven to be dominated by the so-called ``channel effect", which is essential for semidispersing billiards, with the existence of a pair of parallel trajectories tangential to all convex boundaries.

The first rigorous analysis of correlations
for dispersing billiards with cusps was given by Chernov and Markarian in \cite{CM05}, where they proved the rates of correlations (see the correlation function defined as in (\ref{Cn})) decay at $\cO((\ln n)^2/n)$, as $n\to\infty$. The rates were improved to $\cO(n^{-1})$ in \cite{CZ3}.  This model was further investigated in \cite{BCD1, BCD2}. In \cite{CM05}, Chernov and Makarian also raised an open question: ``It is interesting to let the curvature vanish at the vertex of the
cusp, ....  would this affect the rate of the
decay of correlations?"
Another interesting question to ask is that since all known billiards have decay rates at least of order $\cO(n^{-1})$, are there any  billiards with slower mixing rates?

 To answer both questions, in this paper we investigate the model proposed by Chernov and Markarian in \cite{CM05}, and show that this family of billiards enjoys arbitrarily slower decay rates.    More precisely, we first take MachtaÕs three-arc table $Q_1$  as studied by Chernov and Markarian \cite{CM05}, with boundary consisting of $3$  smooth curves $\Gamma_i'$, $i=1\cdots, 3$; and  $Q_1$ has three cusps at the intersection points. Then we smoothly deform these curves at their end points, and denote the new curves as $\Gamma_i$; such that they all have zero  derivatives up to $\beta-1$ order  at  end points, and the $\beta$-th order derivative is not zero, for $\beta\in (2,\infty)$.  We let $Q=Q_{\beta}$ be  bounded by $\Gamma_1, \Gamma_2,\Gamma_3$, for $\beta\in (2,\infty)$.  Indeed according to our above assumption,  if we choose a Cartesian coordinate system $(s,z)$ with origin at any of these cusp point, denoted as $P$, with the horizontal $s$-axis being the tangent line to the boundary of the billiard table, then   the billiard table satisfies the following three conditions:
\begin{itemize}
\item[\textbf{(h1)}]  We assume  for some small $\eps_0>0$,  the pair of boundary adjacent to $P$ can be represented in the $\eps_0-$neighborhood of the cusp $P$ as:
 \beq\label{z1s}
 z_1(s)=\beta^{-1} s^{\beta},\,\,\,\,\,\,z_2(s)=-\beta^{-1} s^{\beta},\,\,\,\,\,\,\forall s\in [0,\eps_0]\eeq
     \item[\textbf{(h2)}]  We also assume  that the tangent line of the table at the cusp  $P$ will hit the opposite boundary at a point, called $D$,  perpendicularly.
  \end{itemize}

   We will investigate the statistical properties of the billiard system on $Q_{\beta}$.  The billiard	 flow $\Phi^t$ is defined on the unit sphere bundle $Q\times	 \mathbf{S}^1$ and preserves the
Liouville measure.
This type of billiards  can be viewed as a special type of semi-dispersing billiards, as its boundary contains points with zero curvature. Semi-dispersing billiards have been proven to	 enjoy strong ergodic
properties: their continuous time dynamics and the billiard ball maps
are both completely hyperbolic, ergodic, K-mixing and Bernoulli, see
\cite{CH96, GO, OW98, Sin70,SC87,CM} and the references therein. However, these systems	
have quite different statistical properties depending on the geometric properties of the billiard table. Figure \ref{fig1}.
describes  a billiard table with cusp at the flat point $P$ for $\beta>2$.
\begin{figure}[h]
\centering \psfrag{Q}{\scriptsize$Q$} \psfrag{z}{\scriptsize$z$}
 \psfrag{s}{\scriptsize$s$}\psfrag{1}{\scriptsize$\bB$}
\includegraphics[width=3in]{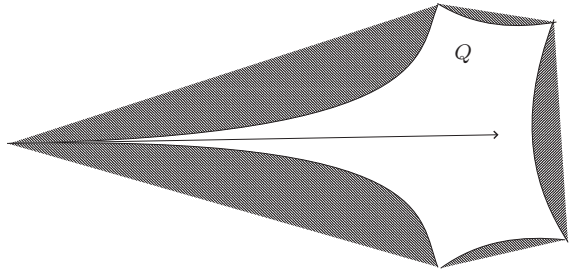}
\renewcommand{\figurename}{Fig.}
\caption{\small\small\small\small\small{A	 table with a cusp at a flat point for $\beta\in (2, \infty)$}}\label{fig1}
\end{figure}

There is a natural cross section $\cM$ in	 $Q\times	 \mathbf{S}^1$ that contains all postcollision vectors based at the boundary of the table $\partial Q$. The set $\cM=\partial Q \times [-\pi/2, \pi/2]$  is called the collision space.  Any postcollision vector $x\in \cM$ 	 can be represented by $x=(r, \varphi)$, where $r$ is the
arclength parameter along $\partial Q$,	starting from an end point of $\partial Q$, measured in the clockwise direction; and $\varphi\in [-\pi/2, \pi/2]$ is the angle that $x$	 makes with the inward unit normal vector to the boundary.

The corresponding Poincar\'{e} map (or the billiard map) $\cF: \cM\to \cM$ generated by the
collisions of the particle with $\pQ$ preserves a natural
absolutely continuous measure $\mu$ on the collision space $\cM$, such that
$$d\mu=\frac{1}{2|\partial Q|}\, \cos\varphi dr\,d\varphi.$$

    For any square-integrable observable  $f,g\in L^2_{\mu}(\cM)$,  {\em
correlations} of $f$ and $g$ are defined
by \beq
   \cC_n(f,g,\cF,\mu) = \int_{\cM} (f\circ \cF^n)\, g\, d\mu -
	\int_{\cM} f\, d\mu    \int_{\cM} g\, d\mu.
	   \label{Cn}
\eeq

The mixing speed of the system $(\cM,\cF,\mu)$ is characterized by
the \emph{rate of decay of correlations}, i.e., by the speed of
convergence to $0$ of (\ref{Cn}) for ``good enough'' functions $f$ and $g$.

Let $S_{\pm n}$ be the singular set of the map $\cF^{\pm n}$, for any $n\geq 1$, and $S_{n_1, n_2}:=\cup_{m=n_1}^{n_2} S_{m}$ the union of these singular sets, for any integers $n_1, n_2$.   For any $\gamma\in (0,1)$, let $\cH(\gamma)$	be the set of all  bounded real-valued  functions $f\in L_{\infty}(\cM,\mu)$, such that  there exist integer $n_1,n_2$, for any connected component $A\in \cM\setminus S_{n_1, n_2}$, any $x, y\in A$,
\beq \label{DHC-} 	|f(x) - f(y)| \leq \|f\|_{\gamma} \dist(x,y)^{\gamma},\eeq
with
$$\|f\|_{\gamma}\colon = \sup_{A\in S_{n_1,n_2}}\sup_{ x, y\in A}\frac{|f(x)-f(y)|}{\dist(x,y)^{\gamma}}<\infty.$$

For every $f\in \cH(\gamma)$ we define
\beq \label{defCgamma}
   \|f\|_{C^{\gamma}}\colon=\|f\|_{\infty}+\|f\|_{\gamma}.
\eeq

In this paper we  obtain the following  results.
\begin{theorem} \label{TmMain}
For the family of  billiards on $Q_{\beta}$ defined as (\textbf{(h1)}-\textbf{(h2)}), with  $\beta>2$,  Then for any  $\gamma\in (0,1]$, any
observables $f,g\in \cH(\gamma)$  on $\cM$, there exists $C_{f,g}=C(f,g)>0$, such that
$$|\mu(f\circ \cF^n\cdot g)-\mu(f)\mu(g)|\leq  C_{f,g} n^{-\frac{1}{\beta-1}},
$$ for $n\geq 1$.
\end{theorem}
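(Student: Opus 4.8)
The plan is to realize $(\cM,\cF,\mu)$ as a tower over a uniformly hyperbolic base and to reduce the estimate of $\cC_n$ to a tail bound on the return time, following the standard scheme for billiards whose hyperbolicity degenerates on a small set. Concretely, I would fix a reference set $M\subset\cM$ bounded away from the three cusps (for instance the part of the collision space on which $\cF$ is uniformly hyperbolic, with a fixed lower bound on the expansion factor), let $R\colon M\to\IN$ be the first-return time to $M$ and $\bar\cF=\cF^{R}\colon M\to M$ the induced map. Every long excursion of an orbit — which can only be a deep passage into one of the cusps — is then absorbed into a single large value of $R$. The general polynomial-decay theorem for hyperbolic maps with singularities (via the tower/coupling method of \cite{CM05,CZ3}) asserts that if $\bar\cF$ has exponential decay of correlations and
$$\mu(R>n)\le C\,n^{-q},\qquad q>1,$$
then $\cC_n(f,g,\cF,\mu)=\cO\!\left(n^{-(q-1)}\right)$ for $f,g\in\cH(\gamma)$. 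Thus it suffices to prove the return-time tail bound with exponent $q=\beta/(\beta-1)$, since $q-1=1/(\beta-1)$.

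The first task is to verify that $\bar\cF$ meets the hypotheses of that theorem: uniform hyperbolicity with invariant cone fields, bounded distortion along unstable curves, absolute continuity of the holonomy map, and the one-step expansion (complexity) estimate controlling how singularity lines cut unstable curves. On $M$ these are inherited from the dispersing character of the $\Gamma_i$ away from the cusps; the only mechanism that can spoil them is the accumulation of collisions near a flat cusp vertex, and that is precisely what inducing removes. I would carry this out in the standard-pairs formalism, obtaining exponential decay for $\bar\cF$ so the reduction applies. This part is technically heavy but follows the established template, so I do not expect it to be the essential difficulty.

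The crux is the tail estimate, which requires a quantitative analysis of a single cusp passage in the coordinates $(s,z)$ of \textbf{(h1)}. By the symmetry $z_{1,2}(s)=\pm\beta^{-1}s^{\beta}$, an orbit entering the cusp bounces in a funnel whose local half-opening angle at depth $s$ is $\theta(s)\approx s^{\beta-1}$. Writing $\alpha$ for the angle of the trajectory with the axis, the transverse action $J\approx \alpha\,s^{\beta}$ is an adiabatic invariant of the passage. Hence the orbit turns around at the depth $\delta$ where $\alpha$ becomes of order one, giving $\delta\approx J^{1/\beta}$; and since the longitudinal progress per collision is $\approx s^{\beta}/\alpha\approx s^{2\beta}/J$, the number of reflections is
$$N\;\approx\;\int_{\delta}^{\eps_0}\frac{J}{s^{2\beta}}\,ds\;\approx\;J\,\delta^{-(2\beta-1)}\;\approx\;\delta^{-(\beta-1)}.$$
Thus a passage of minimal depth $\delta$ produces $R\approx N\approx\delta^{-(\beta-1)}$, so $R>n$ corresponds to $\delta\lesssim n^{-1/(\beta-1)}$, equivalently $J\lesssim n^{-\beta/(\beta-1)}$. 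Since $J$ is, up to a smooth bounded factor, one of the entry coordinates and $d\mu\approx\cos\varphi\,dr\,d\varphi$ is comparable to $dJ$ times a bounded density, the set of orbits penetrating below depth $\delta$ has measure of order $\delta^{\beta}$; here \textbf{(h2)} is used to guarantee that each excursion has a well-defined entry and exit (the axis meets the opposite wall orthogonally) and does not immediately re-enter. This yields
$$\mu(R>n)\;\approx\;\bigl(n^{-1/(\beta-1)}\bigr)^{\beta}\;=\;n^{-\beta/(\beta-1)},$$
which is exactly $q=\beta/(\beta-1)$. A reassuring check: $\beta=2$ gives $q=2$ and rate $\cO(n^{-1})$, matching the positive-curvature cusp of \cite{CZ3}, while $\beta\to\infty$ drives the rate toward $\cO(1)$, the claimed arbitrarily slow mixing.

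The main obstacle is turning this adiabatic heuristic into rigorous two-sided control over a passage lasting $\approx n$ collisions. I would need to show that $J$ is conserved up to a multiplicative error that stays bounded along the entire passage — i.e.\ control the cumulative distortion of the cusp dynamics — and that the map sending entry data to minimal depth $\delta$ has Jacobian comparable to $\delta^{\beta-1}$, so that $\mu(R>n)\le C n^{-\beta/(\beta-1)}$ holds as a genuine upper bound. The vanishing of the curvature at the vertex is what makes these estimates delicate, since the usual dispersing expansion is lost exactly where the orbit spends most of its time; handling this degeneracy uniformly in the penetration depth is the technical heart of the argument. Once the tail bound is established, feeding $q=\beta/(\beta-1)$ into the general theorem gives $\cC_n(f,g,\cF,\mu)\le C_{f,g}\,n^{-1/(\beta-1)}$ for all $f,g\in\cH(\gamma)$, which is Theorem \ref{TmMain}.
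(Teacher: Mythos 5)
Your overall strategy coincides with the paper's: induce on a set $M$ away from the cusps, prove exponential decay of correlations for the induced map, and estimate the tail of the return time. Your adiabatic invariant $J\approx \alpha\, s^{\beta}$ is exactly the paper's almost-invariant quantity $H_N=|r-r_f|^{\beta}\cos\varphi$ of Proposition \ref{prop3}, your turning-depth and collision-count relations ($\delta\approx J^{1/\beta}$, $N\approx\delta^{-(\beta-1)}$) match the asymptotics there, and your tail exponent $\mu_M(R>n)\approx n^{-\beta/(\beta-1)}$ agrees with Lemma \ref{muC}. So the geometric heart of your proposal is sound and is the same mechanism the paper exploits.

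There is, however, a genuine gap at the final reduction. The statement you invoke as a ``general polynomial-decay theorem'' --- exponential decay for the induced map plus $\mu_M(R>n)\le Cn^{-q}$, $q>1$, implies $\cC_n(f,g,\cF,\mu)=\cO(n^{-(q-1)})$ --- does not exist in that clean form in the coupling/tower framework you are using. What the general scheme of \cite{CZ} actually yields (Lemma \ref{LmMain2} in the paper) is $\cC_n=\cO\bigl(n^{-(q-1)}(\ln n)^{q}\bigr)$. The logarithm is not an artifact: the return time to the base $\Delta_0$ of Young's tower under $\cF$ is a sum of up to $b\ln n$ excursion lengths, and with no information on their joint distribution one can only guarantee that the longest excursion exceeds $n/(b\ln n)$, which, after inserting the cell-measure bound, produces the factor $(\ln n)^{1+a}$ with $a=1/(\beta-1)$. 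Since the theorem asserts the clean bound $C_{f,g}\,n^{-1/(\beta-1)}$, removing this logarithm is part of the statement, and it is not a black-box consequence of \cite{CZ3} either: the method of \cite{CZ3} requires proving, for this specific family of tables, that a point finishing a corner series of length $n$ cannot soon begin another long series. In the paper this occupies all of Section 7: the transition estimates of Proposition \ref{Cor4} (the image $FM_n$ meets $M_m$ only for $c_1 n^{(\beta-1)/\beta}\le m\le c_2 n^{\beta/(\beta-1)}$, with explicit transition probabilities), Lemma \ref{lem3} and Lemma \ref{barMm} (outside a set of relative measure $\cO\bigl(m^{-\beta^2/(2(\beta-1)(3\beta^2-3\beta+1))}\bigr)$, the next $b\ln m$ iterates of a point of $M_m$ land in cells of index at most $m^{1-1/(2\beta)}$), and the longest-excursion counting argument giving $\mu(A_m\setminus B_{m,b})=\cO(m^{-1/(\beta-1)})$. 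These estimates in turn rest on the $\beta$-dependent expansion factor of Proposition \ref{KBbd}, which you also defer. As written, your proposal therefore proves only the weaker bound $\cO\bigl(n^{-1/(\beta-1)}(\ln n)^{\beta/(\beta-1)}\bigr)$, not Theorem \ref{TmMain}.
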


For the case when $\beta=2$, the system  corresponds to the dispersing billiards with cusps  and enjoys  	 mixing rates of order $\cO(n^{-1})$, see \cite{CM05,CZ3}.

  \noindent Convention. We use the following notation: $A\sim B$ means that $C^{-1}\leq A/B\leq C$ for some constant $C>1$ . Also, $A=\cO(B)$  means that
$|A|/B<C$  for some constant $C>0$. From now on, we will denote by $C>0$ various constants (depending only on the table) whose exact
values are not important.

\section{General scheme}\label{Sec:3}

 Based upon the methods by Young \cite{Y98},   a general scheme was developed by Markarian, Chernov and Zhang
\cite{M04, CZ, CZ2, CZ3, CZ09}  on obtaining slow rates of hyperbolic systems with singularities and applied the method on different models. Let $M\subset \cM$ be a nice subset, such that the induced map $F : M\rightarrow M $
 is strongly (uniformly) hyperbolic. One can easily check that it preserves the measure
$\mu_M$ obtained by conditioning $\mu$ on $M$.  For our billiards, hyperbolicity deteriorates only as the
moving particle gets trapped by a cusp, when it experiences a large number of
rapid collisions near the corner point of the cusp.

 In this paper, we first fix a number $K_0>1$, and  call any sequence of successive collisions
of length $>K_0$ in a cusp a {\it{corner series}}.  In particular, we post an upper bound for $K_0$,  such that for  any grazing collision $x\in \cM$,  its forward trajectory will enter a corner series of length $> K_0$. We thus define
\beq\label{defM}
M=\{x\in \cM\,:\,   \text{forward successive collisions of $x$ in any cusp has
 length} \leq K_0\}.
 \eeq
  Clearly, there exists $\varphi_{K_0}\in (0,\pi/2)$, such that \beq\label{MphiKo}
 |\varphi|\leq \varphi_{K_0},\,\,\,\,\,\forall \varphi\in M,
 \eeq
 i.e. $M\subset \cM$ stays away from $\varphi=\pm \pi/2$.
 For any $x\in M$
we call $$R(x) = \min\{n\geq  1: \cF^n(x)\in M\}$$ the return time function and thus the  return map $F\colon M \to M$ is defined by
\beq \label{Fdef}
   F(x) = \cF^{R(x)}(x),\,\,\,\,\,\forall x\in M.
\eeq

In order to prove Theorem~\ref{TmMain},  the
the strategy  consists of two steps;  they were fully described in \cite{CZ,CZ3}, and has been applied to several
classes of billiards with slow mixing rates, see  \cite{CZ2,CM05}, so we will not bring
up unnecessary details here.\\

\noindent \textbf{(F1)} First, the map $F\colon M\to M$ enjoys
exponential decay of correlations.

\noindent{\small{ More precisely, for any Holder observables $f,g\in\cH(\gamma)$ on $M$ with H\"older exponent $\gamma\in (0,1)$,
$$|\int_{M} (f\circ F^n)\, g\, d\mu_M -
    \int_{M} f\, d\mu_M    \int_{M} g\, d\mu_M|\leq C \|f\|_{C^{\gamma}}\|g\|_{C^{\gamma}} \vartheta^n,$$
for some uniform constant $\vartheta=\vartheta(\gamma)\in (0,1)$ and $C>0$. }}\medskip

\noindent \textbf{(F2)} Second, the distribution of the return time function $R: M\to [1, \infty)$ satisfies:
  $$\mu_M( R\geq n)\sim \frac{1}{n^{1+a}},$$
for  some $a>0$ and any large $n $.

 \medskip

\noindent It was proved in \cite{CZ} that assumption \textbf{(F1)-(F2)} imply polynomial decay rates of order  $\cO(n^{-a}(\ln n)^{1+a})$. Also, the proof of (\textbf{F1}) is reduced in
\cite{CZ} to the verification of a one-step expansion condition, as well as the regularities of the invariant manifolds for the induced map. To improve the upper bound for the decay rates, one needs to analyze the statistical properties of the return time function.  In \cite{CZ3}, the upper bound of decay rates of correlations was improved by dropping the logarithmic factor.  Mainly because points in the region $(R>n)$  has tendency to move to $(R<n)$ fast enough under further iterations of the billiard map.

The paper is organized as following.  In  Section 3, we investigate the asymptotic quantities for any typical, long corner series. In Section 4, we construct the induced system $(F,M)$, by removing  those corner series. The hyperbolicity of the reduced map is also proved in Section 4. The assumption \textbf{(F2)} is verified in Section 5, by analyzing the distribution of the return time function. The exponential decay of correlations for the reduced map and \textbf{(F1)} was proved in Section 6, by verifying the One-step expansion estimates, see Lemma \ref{onestep}.    In Section 7, we get the improved upper bound using method as in \cite{CZ3}.

\section{The corner series}
In this section,  we first investigate the geometry of corner series, which correspond to  certain billiard trajectories
entering the cusp and  experiencing a large number of refections there before
getting out.  To simplify our analysis we consider here the cusp made by $\Gamma_1, \Gamma_1$ with a common tangent line at the end point $P$.   For any $N\geq K_0$, we define $M_N$ to be  the set of points in $M$ whose forward trajectories go down  the cusp at $P$ for a corner series of length $N$.
For simplicity,  \textbf{we assume the flat point $P$ has $r$-coordinate $r_f. $}

Let $N>K_0$ be the number of reflections in the corner series  starting from a vector $x\in M_N$. For any $x=(r,\varphi)\in M_N$, let  $x_N=F x$, and $x_n:=\cF^{n} x=(r_n, \varphi_n)$, for $n=1,\cdots, N-1$, denote the set of all points of reflection on $\Gamma_1'\cup\Gamma_2'$.  We also call $\{x_n, n=1,\cdots, N-1\}$ a corner series of length $N$ generated by $x\in M_N$.

We choose a Cartesian coordinate system $(s,z)$ with origin at $P$, the horizontal $s$-axis being the tangent line to the boundary of the billiard table. By \textbf{(h1)},     for some small $\eps_0>0$,  the pair of boundary adjacent to $P$ can be represented in the $\eps_0-$neighborhood of the cusp $P$ as:
 \beq\label{zsP}
 z(s)=\pm \beta^{-1} s^{\beta},\,\,\,\,\,\,\forall s\in [0,\eps_0]\eeq

We denote $s_n$ to be the $s$-coordinate of the base point of $x_n=(r_n,\varphi_n)$. By the smoothness of the boundary curves,
\beq\label{rnsn}|r_n-r_f|=\int_0^{s_n} \sqrt{1+|z'(s)|^2} \,ds=s_n+\cO(s_n^{\beta}).\eeq

To estimate the tail distribution of $\mu_M(R\geq n)$, we will fix a large number $N_0$, and only consider those corner series, such that $N>N_0$. We will also work
with more convenient coordinates:
$$\gamma_n=\pi/2-|\varphi_n|, \,\,\,\,\,\,\,\text{ and }\,\,\,\,\,\,\,\alpha_n=\tan^{-1}(s_n^{\beta-1}).$$

Note that by  (\ref{zsP}),  the tangent vector of $\partial Q$ at $s_n$ is $(1, s_n^{\beta-1})$, which implies that
\beq \label{alphansn}\alpha_n=\tan^{-1}(s_n^{\beta-1})=s_n^{\beta-1}+\cO(s_n^{3(\beta-1)})\eeq stands for  the angle of the tangent vector  at $s_n$ made with the horizontal axis, or equivalently, with the tangent line through the flat point $P$.  Note  that both $\alpha_n$ and
$\gamma_n$ are positive for $1\leq n\leq N-1$; $\alpha_n$ are all small  if $N$ is  large enough. While $\gamma_n$
are initially small, they slowly grow to
about $\pi/2$ for $n\sim N/2$, and then again decrease and
get small.   We use notations similar to that of \cite{CM05}, and define
$$\alpha_{\bar N}:=\min\{\alpha_n\,:\, 1\leq n\leq N\}.$$
It was proved in \cite{CM05}~ Lemma 3.1, that $\bar N$ is almost the middle point of $N$, i.e. $|\bar N-N/2|\leq 2$, if $\beta=2$. The proof only relies on the symmetry of the boundary of the billiard table near the cusp, so it also apply to our case words by words, even for $\beta>2$.
We further subdivide the corner series into three segments. We fix a small enough
$\bar\gamma$ and  let $$N_1=\max\{ n\leq \bar N\,:\, \gamma_n<\bar\gamma\},\,\,\,\, N_3=\max\{n\geq \bar N\,:\, \gamma_n>\bar\gamma\}.$$ And put $N_2=\bar N$.
We call the segment on $[1, N_1]$ the ``entering period" in the corner series, the
segment $[N_1 + 1;N_3-1]$ the ``turning period", and the segment $[N_3,N]$
its ``exiting period". Clearly $|N_3+N_1-N|\leq 2$.

By the symmetry of the billiard table, it is enough to consider the first half of the series, $1\leq n\leq N_1$.

Using these relations, one has the following proposition for a corner series of length $N$ generated by any $x\in M_N$.
\begin{proposition}\label{prop3} The following are true:\\
(1)  $N_1\sim N_2-N_1\sim N_3-N_2\sim N-N_3\sim N.$ i.e. all  three segments in the corner series have length of order $N$;\\
(2) $\alpha_1\sim N^{-\frac{\beta}{2\beta-1}},\,\,\,\, \alpha_{n}\sim n^{-1}\sim N^{-1}$, for $n\in [N_1, N_2]$;\\
(3) $\alpha_n\sim (n^{\beta-1}N^{\beta})^{-\frac{1}{2\beta-1}}$, for $n\in [1,N_1]$;\\
(4) $\gamma_1=\cO(N^{-\frac{\beta}{2\beta-1}}), \,\,\gamma_2\sim N^{-\frac{\beta}{2\beta-1}}$;\\
(5) $\gamma_n\sim (n N^{-1})^{\frac{\beta}{2\beta-1}}$, for $n\in [1,N_1]$;\\
(6) For  $N$ sufficiently large, the  quantity $\{H_{N}((r_n,\varphi_n)), n=1,\cdots, N-1\}$ is almost invariant along a corner series of length $N$:
$$H_{N}((r_n,\varphi_n))=|r_n-r_f|^{\beta} \cos \varphi_n=C_N+\cO(N^{-\frac{2\beta-1}{\beta-1}}),$$
for any $n=N_1,\cdots, N_2$, with $C_N=C N^{-\frac{\beta}{\beta-1} }$, for some uniform constant $C>0$. For $n=1,\cdots, N_1$, we have
$$H_N((r_n,\varphi_n))=|r_n-r_f|^{\beta} \cos \varphi_n=C_N+\cO(n^{-1}N^{-\frac{\beta}{\beta-1}}).$$

\end{proposition}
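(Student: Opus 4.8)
The plan is to reduce the whole proposition to two elementary recursions for consecutive reflections inside the cusp, and then to read off every asymptotic from a single, approximately conserved quantity. First I would fix the reflection geometry. With the two branches written as $z=\pm\beta^{-1}s^\beta$, the wall tangent at the $n$-th collision makes signed angle $\pm\alpha_n$ with the $s$-axis, $\alpha_n=\arctan(s_n^{\beta-1})=s_n^{\beta-1}+\cO(s_n^{3(\beta-1)})$, and I would freely pass between $s_n$ and $|r_n-r_f|$ using $|r_n-r_f|=s_n+\cO(s_n^{\beta})$. Since successive collisions alternate between the two branches and the velocity is constant along each free leg, the specular law (reflection of the direction across the tangent line) shows that the angle $\psi_n:=\alpha_n+\gamma_n$ of the outgoing leg with the axis obeys $\psi_{n+1}=\psi_n+2\alpha_{n+1}$ on the entering half $1\le n\le\bar N$; equivalently
\[ \gamma_{n+1}-\gamma_n=\alpha_n+\alpha_{n+1}. \]
A second, metric relation comes from equating $\tan\psi_n$ with the slope of the chord from $(s_n,\beta^{-1}s_n^{\beta})$ to $(s_{n+1},-\beta^{-1}s_{n+1}^{\beta})$:
\[ \tan(\alpha_n+\gamma_n)=\frac{\beta^{-1}(s_n^{\beta}+s_{n+1}^{\beta})}{s_n-s_{n+1}}. \]
I would derive both with honest $\cO(\cdot)$ error terms and invoke $|\bar N-N/2|\le 2$ (\cite{CM05}, Lemma 3.1) to restrict to $1\le n\le\bar N$, treating the exiting half by symmetry.

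Next I would establish the invariant of part (6). Passing to the continuous approximation (replacing $\alpha_n+\alpha_{n+1}$ by $2\alpha_n$ and $s_n^{\beta}+s_{n+1}^{\beta}$ by $2s_n^{\beta}$ with controlled error), the two recursions give $\gamma'\approx 2\alpha=2s^{\beta-1}$ and $-s'\approx 2\beta^{-1}s^{\beta}/\tan(\alpha+\gamma)$. A direct computation then shows $H:=s^{\beta}\sin\gamma$ (which equals $|r_n-r_f|^{\beta}\cos\varphi_n$ up to a factor $1+\cO(\alpha_n)$) satisfies $d\ln H/dn=\cO(n^{-2})$ on the entering period, so $H$ drifts only by a convergent amount. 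I would fix $C_N$ by matching at the bottom, where $\gamma\approx\pi/2$, $\sin\gamma\approx 1$, hence $H\approx s_{\bar N}^{\beta}=\alpha_{\bar N}^{\beta/(\beta-1)}$; the turning-period balance forces $\alpha_{\bar N}\sim N^{-1}$ and thus $C_N\sim N^{-\beta/(\beta-1)}$. Summing per-step relative errors then yields the two bounds in (6): $\sum_{k\ge n}k^{-2}\sim n^{-1}$ gives $\cO(n^{-1}N^{-\beta/(\beta-1)})$ on $[1,N_1]$, and the turning-period sum gives $\cO(N^{-(2\beta-1)/(\beta-1)})$.

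With $H\equiv C_N$ in hand, parts (1)--(5) are essentially a matter of substitution. On the entering period $\sin\gamma\approx\gamma$, so $s^{\beta}=C_N/\gamma$ turns the angle recursion into the separable relation $\gamma^{(\beta-1)/\beta}\gamma'\approx 2C_N^{(\beta-1)/\beta}$, integrating to $\gamma^{(2\beta-1)/\beta}\sim C_N^{(\beta-1)/\beta}\,n$, that is $\gamma_n\sim(n/N)^{\beta/(2\beta-1)}$ (part 5). Then $\alpha_n=(C_N/\gamma_n)^{(\beta-1)/\beta}\sim(n^{\beta-1}N^{\beta})^{-1/(2\beta-1)}$ (part 3), and setting $n=1,2$ gives part (4) and $\alpha_1$ in part (2); the one-sided bound on $\gamma_1$ reflects an arbitrarily grazing entry, while $\gamma_2\ge\alpha_1$ from the recursion supplies the matching lower bound. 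On the turning period $\sin\gamma\approx 1$ forces $s_n\sim C_N^{1/\beta}\sim N^{-1/(\beta-1)}$ and $\alpha_n\sim N^{-1}\sim n^{-1}$ (part 2). Finally, solving $\gamma_n=\bar\gamma$ gives $N_1\sim N$, and symmetry yields $N_2-N_1\sim N_3-N_2\sim N-N_3\sim N$ (part 1).

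The main obstacle is uniform control through the turning region, where $\gamma$ is no longer small and the approximations $\sin\gamma\approx\gamma$ and $\tan\psi\approx\psi$ break down. There I would change variables to $\eta_n=\pi/2-\gamma_n=|\varphi_n|$ and rederive the recursion in the near-perpendicular regime, then patch the three regimes so that the accumulated multiplicative error in $H$ stays within the stated orders. The accompanying burden is honest bookkeeping: keeping the discrete-to-continuous replacements (and the $|r_n-r_f|$ versus $s_n$ interchange) under genuine $\cO(\cdot)$ control, and making sure the two-sided $\sim$ estimates, rather than mere upper bounds, survive composition over $\sim N$ collisions.
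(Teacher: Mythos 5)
Your proposal is correct, and it rests on the same two pillars as the paper's proof: the pair of collision recursions $\gamma_{n+1}=\gamma_n+\alpha_n+\alpha_{n+1}$, $\tan v_n=\beta^{-1}(s_n^{\beta}+s_{n+1}^{\beta})/(s_n-s_{n+1})$ with $v_n=\gamma_n+\alpha_n$, and the almost-conserved quantity $s^{\beta}\sin\gamma$ (the paper works with $A_n=s_n^{\beta}\sin v_n$, which agrees with your $H$ up to negligible factors), normalized by matching at the turning region. Where you genuinely diverge is in how the power laws (2)--(5) are extracted. The paper stays discrete: it introduces $u_n=\alpha_n/\alpha_{n+1}$ and $w_n=v_n/\alpha_n$, proves $w_{n+1}=w_nu_n+2$, bootstraps $w_n=(2+\bar a)n+E_n$ with logarithmic error control, deduces $\alpha_n\sim\alpha_1 n^{-(\beta-1)/(2\beta-1)}$, and only then pins down $\alpha_1\sim N^{-\beta/(2\beta-1)}$ from $v_{N_1}\sim\bar\gamma$. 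You instead substitute the invariant into the angle recursion to eliminate $s$, obtaining the separable relation $\gamma^{(\beta-1)/\beta}\gamma'\approx 2C_N^{(\beta-1)/\beta}$, and integrate; this is shorter and makes the exponents transparent, but shifts the entire burden onto justifying the discrete-to-continuum replacement with two-sided bounds over $\sim N$ steps -- which is precisely the work the $u_n,w_n$ machinery does in the paper. One point worth making explicit in your write-up: your per-step drift bound $d\ln H/dn=\cO(n^{-2})$ needs the a priori estimates $\alpha_n/v_n\lesssim 1/n$ and $\alpha_n\lesssim 1/n$ \emph{before} any asymptotics are available, and these follow elementarily from summing the angle recursion ($v_n\geq 2n\alpha_n$ by monotonicity of $\alpha_n$) together with $v_n\leq\pi/2$; this is exactly the paper's bound $w_n\geq 2n$, and citing it removes any appearance of circularity in deriving (6) before (1)--(5). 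Your normalization argument (each of the three periods lasts $\sim 1/\alpha_{\bar N}$ collisions, hence $\alpha_{\bar N}\sim 1/N$ and $C_N\sim N^{-\beta/(\beta-1)}$) is a clean, self-consistent alternative to the paper's route and simultaneously delivers part (1).
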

\begin{proof}
 By the symmetric property of $\Gamma_1'$ and $\Gamma_2'$, we will now only concentrate on $n=1,\cdots, N_2\sim N/2$. Note  that both $\alpha_n$ and
$\gamma_n$ are positive for $1\leq n\leq N_2$. $\{\alpha_n\}$ is a decreasing sequence, and  are all small; while $\gamma_n$ is an increasing sequence, which
is initially small, the terms slowly grow to
about $\pi/2$ for $n\sim N/2$.

The following
equations are simple geometric facts:
\beq\label{3.61}
\gamma_{n+1}=\gamma_{n}+\alpha_n+\alpha_{n+1},\eeq
\beq\label{3.71}
s_{n+1}=s_n-\tau_n \cos(\gamma_n+\alpha_n),
\eeq
where we denote
\beq\label{taun}
\tau_n=\frac{1}{\beta}\cdot \frac{s_n^{\beta}+s_{n+1}^{\beta}}{\sin(\gamma_n+\alpha_n)}.\eeq
 Here $\tau_n$ is the free path between two collisions based at $s_n$ and $s_{n+1}$.

Now we denote $v_n=\gamma_n+\alpha_n$, then
\beq\label{3.6}
v_{n+1}=v_{n}+2 \alpha_{n+1}.\eeq
In addition (\ref{3.71}) can also be written as
\beq\label{3.7}
s_{n+1}=s_n-  \frac{s_{n+1}^{\beta}+s_n^{\beta}}{ \beta \tan v_n}.
\eeq
Using the mean-value theorem, we know for any $d>1$, and $n$ large,  there exists $s_{n,d,*}\in [s_{n+1}, s_n]$, such that \beq\label{snpower}
s_{n+1}^{d}-s_n^{d}=d \,s_{n,d,*}^{d-1}(s_{n+1}-s_n)=- d \,s_{n,d,*}^{d-1} \frac{s_{n+1}^{\beta}+s_n^{\beta}}{ \beta \tan v_n}.
\eeq

Combining with (\ref{alphansn}), we know that by the mean-value theorem, there exists $s_{n,*}\in [s_n, s_{n+1}]$, such that
\begin{align*}
\alpha_{n+1}-\alpha_n&=\tan^{-1}(s_{n+1}^{\beta-1})- \tan^{-1}(s_{n+1}^{\beta-1})=\frac{s_{n+1}^{\beta-1}-s_n^{\beta-1}}{1+s^{2(\beta-1)}_{n,*}}\\
&=(s_{n+1}^{\beta-1}-s_n^{\beta-1})(1-s^{2(\beta-1)}_{n,*}+\cO(s^{4(\beta-1)}_{n}))\\
&=- (\beta-1) \,s_{n,\beta-1,*}^{\beta-2} \frac{s_{n+1}^{\beta}+s_n^{\beta}}{ \beta \tan v_n}(1-s^{2(\beta-1)}_{n}+\cO(s^{3(\beta-1)}_{n}/\tan v_n))\end{align*}
where we used (\ref{snpower}) for $d=\beta-1$ in the last step. Using  (\ref{snpower}) one more time, we get
\begin{align*}
s_{n+1}^{\beta}s_{n,\beta-1,*}^{\beta-2}+s_n^{\beta}s_{n,\beta-1,*}^{\beta-2}&=s_{n+1}^{2\beta-2}+s_n^{2\beta-2}+s_{n+1}^{\beta}(s_{n,\beta-1,*}^{\beta-2}-s_{n+1}^{\beta-2})+s_{n}^{\beta}(s_{n,\beta-1,*}^{\beta-2}-s_{n}^{\beta-2})\\
&=s_{n+1}^{2\beta-2}+s_n^{2\beta-2}+\cO(s^{3(\beta-1)}_{n}/\tan v_n))\end{align*}
Combining above facts, as well as (\ref{alphansn}), we get
\beq\label{3.7'}
\alpha_{n+1}-\alpha_n= -  \bar a\cdot \frac{\alpha_{n+1}^2+\alpha_n^2}{2  \tan v_n}+\cO(\alpha_n^3(\tan v_n)^{-2})=\alpha_n-  \bar a\cdot \frac{\alpha_n^2}{  \tan v_n}+\cO(\alpha_n^3(\tan v_n)^{-2}),\eeq
where $\bar a=2(\beta-1)/\beta$.

By (\ref{3.6}) and (\ref{3.7}), we have
\beq\label{3.8}
v_2>2\alpha_2,\,\,\,\,\sum_{n=1}^{N_2}\alpha_n\leq v_{N_2}/2\leq \pi/4.
\eeq
(\ref{3.7'}) implies that
$$ \bar a\cdot \frac{\alpha_n}{  \tan v_n}-1=-\frac{\alpha_{n+1}}{\alpha_n}+\cO(\alpha_n^2(\tan v_n)^{-2})<1.$$
Thus \beq\label{3.8'}\frac{\alpha_n}{\tan v_n}<1/\bar a.\eeq


We denote $A_n= s_n^{\beta}\sin v_n$, for $n=1,\cdots, N_2$. We would like to show that $A_n\sim A_{N_2}$ is almost invariant,  as long as $N$ is large.  To see this, using (\ref{3.6})-(\ref{3.7}), we get
 \begin{align}\label{en}e_n&:=A_{n+1}-A_n=s_{n+1}^{\beta}\sin v_{n+1}-s_n^{\beta}\sin v_n\nonumber\\
 &=\left(s_{n+1}^{\beta}-s_n^{\beta}\right)\sin v_{n+1}+s_{n}^{\beta}\left(\sin v_{n+1}-\sin v_n\right)\nonumber\\
 &=\beta s_{n,*}^{\beta-1}(s_{n+1}-s_n)\sin v_{n+1}+s_n^{\beta}\cos v_n^*(v_{n+1}-v_n)\nonumber\\
 &=-\frac{s_{n,*}^{\beta-1}(s_n^{\beta}+s_{n+1}^{\beta})\sin v_{n+1}}{\tan v_n}+2s_{n}^{\beta}s_{n+1}^{\beta-1}\cos v_n^*\nonumber\\
 &=-2s_{n}^{2\beta-1}\cos v_n + 2s_{n}^{2\beta-1}\cos v_n^*+\cO(s_n^{3\beta-2}\cos v_n)\nonumber\\
 &=4s_{n}^{3\beta-2}\sin v_n+\cO(s_n^{3\beta-2}\cos v_n), \end{align}
 where $s_{n,*}\in [s_{n+1},s_n]$ and $v_n^*\in [v_n, v_{n+1}]$.

 Note that for $n\in [N_1, N_2]$, $\bar\gamma < v_n\leq \pi/2$. This above estimation implies that
 $$s_n^{\beta}\sin v_n=s_{N_2}^{\beta}\sin v_{N_2}+\cO(\sum_{k=n}^{N_2}s_k^{3\beta-2} ),$$
 which implies that the error term  satisfies $$\sum_{k=n}^{N_2}s_k^{3\beta-2}\leq s_n^{\beta}.$$

Using (\ref{3.7}), we know that
\begin{align*}
s_{n+1}^{2\beta-1}-s_n^{2\beta-1}=-\frac{2\beta-1}{\beta}\frac{s_{n+1}^{3\beta-2}}{ \tan v_n}+\cO\left(\frac{s_n^{4\beta-3}}{\tan v_n}\right).\end{align*}
If we sum over for $k\in [n, N_2]$, this implies that
$$s_{n}^{2\beta-1}=s_{N_2}^{2\beta-1}+\frac{2\beta-1}{\beta}\sum_{k=n}^{N_2}\frac{s_{k+1}^{3\beta-2}}{ \tan v_k}+\cO\left(\sum_{k=n}^{N_2}\frac{s_k^{4\beta-3}}{\tan v_k}\right).$$
Again using the fact that $v_n<\pi/2$, for $n\in [1,  N_2]$, then we get
$$ \sum_{k=n}^{N_2}s_{k+1}^{3\beta-2}\cos v_k\leq \frac{2\beta}{2\beta-1} s_{n}^{2\beta-1}<s_{n}^{2\beta-1}.$$

Next we fix another very small number $\bar\gamma_1$, and $\tilde N_2<N_2$, such that $\tilde N_2=\max\{n<N_2\,:v_{n}<\pi/2-\bar\gamma_1\}$. Then for any $n\in [1,  \tilde N_2]$, then we get
$$ \sin\bar \gamma_1\sum_{k=n}^{\tilde N_2}s_{k+1}^{3\beta-2}\leq  \sum_{k=n}^{\tilde N_2}s_{k+1}^{3\beta-2}\cos v_k <s_{n}^{2\beta-1}.$$

Combining the above estimations together with the expression for $e_n$, we know that
$$\sum_{k-n}^{\tilde N_2} |e_k|\leq 4s_{n}^{2\beta-1}/\sin\bar\gamma_1.$$
This suggests that for $N$ large enough, for any $n=1,\cdots, \tilde N_2$, the quantity
$$s_n^{\beta} \sin v_{n} = s_{\tilde N_2}^{\beta}\sin v_{\tilde N_2}+\cO(s_n^{2\beta-1}).  $$ should be almost invariant. We define \beq\label{defCN}
C_N:=s_{\tilde N_2}^{\beta}\sin v_{\tilde N_2}=s_{\tilde N_2}^{\beta}\cos \bar\gamma_1,\eeq and \beq\label{defHn} A_n:=s_n^{\beta} \sin v_{n}.\eeq Then we have shown
\beq\label{Hn}
A_n=C_N(1+\cO(C_N^{-1}s_n^{2\beta-1}))\sim C_N,
\eeq
for any  $n\in [1, \tilde N_2]$.

Moreover for any $n\in [N_1, \tilde N_2]$, using the fact that $\bar\gamma<v_n<\pi/2-\bar\gamma_1$, thus
\beq\label{alphaN1N2}
s_n\sim s_{\tilde N_2},\,\,\,\,\,\forall n\in [N_1, \tilde N_2]
\eeq

Below, we will only concentrate on $n\in [1,N_1]$. Note (\ref{Hn}) implies that for $n\in [1,N_1]$, we have
\beq\label{Hn11}s_n^{\beta}\sin v_n = C_N^{-1}+\cO(s_n^{2\beta-1}),\,\,\,\,\,\,\text{ and }\,\,\,\,\,\sin v_n= \cos\bar\gamma_1\cdot \frac{s_{\tilde N_2}^{\beta}}{s_{n}^{\beta}}+\cO(C_Ns_n^{\beta-1})\sim \frac{s_{\tilde N_2}^{\beta}}{s_{n}^{\beta}}. \eeq

We now define \beq\label{uwn}
u_n=\frac{\alpha_n}{\alpha_{n+1}},\,\,\,\,\,w_n=\frac{v_n}{\alpha_{n}}
\eeq
It is important to find the asymptotic for $w_n$ and $u_n$. First note that equation (\ref{3.6}) yields
\beq\label{3.12}
w_{n+1}=w_n u_n+2=w_n+2+w_n(u_n-1).\eeq
 Since $u_n-1\geq 0$ and $w_n\geq 2$ by (\ref{3.8}), thus
 \beq\label{wnn}w_n\geq 2n,\eeq for any $1\leq n\leq N_1$.

Thus for $n\in [1,N_1]$, we have
\begin{align*}\label{un-1b}
 u_n-1&=\frac{\alpha_n-\alpha_{n+1}}{\alpha_{n+1}}= \frac{ \bar a(\alpha_{n+1}+u_n\alpha_n)}{2\tan v_n} +\cO(\alpha_n^2)=\cO(\frac{\alpha_n}{\tan v_n})<C,\end{align*}for some uniform constant $C>0$.
Combining with (\ref{3.8}), it  implies that for $1\leq n\leq N_1$, $0<u_n-1< C$ is uniformly bounded.

 Moreover,
 (\ref{3.7}) implies that
\beq\label{3.14} 1-u_n^{-1}=\frac{v_n}{\tan v_n}\cdot \frac{\bar a(u_n^2+1)}{2 w_n} +\cO(\alpha_n^2)\leq \frac{\bar a}{ w_n}+\cO(\alpha_n^2)\leq \frac{\bar a}{ 2 n}+\cO(\alpha_n^2),\eeq
and
$$\alpha_{n+1}\geq \frac{\alpha_n}{1+w_n^{-1}}+\cO(\alpha_n^3)\geq \alpha_n(1-\frac{1}{2n})+\cO(\alpha_n^3)+\cO(\alpha_n n^{-2}).$$
This implies that
$$\alpha_{n}\geq \frac{\alpha_1}{2} \prod_{m=2}^n (1-\frac{1}{m}) \geq \frac{\alpha_1}{cn}$$
for some constant $c>0$.

 We denote $u_n=1+b_n$, with $b_n\leq \frac{\bar a}{n}$, then one can check that
$$u_n^{-1}+u_n=2+\frac{b_n^2}{u_n}.$$
(\ref{3.14}) implies that
 $$u_n-1=\frac{v_n}{\tan v_n}\cdot \frac{\bar a( u_n^{-1}+u_n)}{2 w_n} +\cO(\alpha_n^2)=\frac{v_n}{\tan v_n}\cdot\frac{ \bar a}{ w_n} +\cO(\alpha_n^2).$$

Now(\ref{3.12}) implies that
$$w_{n+1}=w_n u_n+2\leq 2+w_n(1+b_n).$$

Combining (\ref{3.14}) with (\ref{3.12}) gives
\begin{align}\label{3.15}
w_{n+1}&=w_n+ 2+\bar a +\cO(n\alpha_n^2)+\cO(n^{-2})+\cO(v_n^2).
\end{align}
By (\ref{Hn11}), we know that
$$\sin v_n \sim \frac{\alpha_{N_1}^{\frac{\beta}{\beta-1}}}{\alpha_n^{\frac{\beta}{\beta-1}}}=\prod_{k=n}^{N_1} u_k^{-^{\frac{\beta}{\beta-1}}}\sim (n/N_1)^{\frac{\beta}{\beta-1}}.$$ Thus
$$v_n\sim (n/N_1)^2.$$
Let $\Gamma_n=\sum_{i=1}^n v_i^2$, one can now show that   $\sum_{i=1}^n \Gamma_i/i^2$ is bounded:
$$\sum_{i=1}^n\Gamma_i/i^2\leq C\sum_{i=1}^n i^3/N_1^4\leq C,$$
for any $n\in [1,N_1]$.
The lower bound in (\ref{wnn}) now implies
\begin{align}\label{wnEn}
w_n&=w_1+(2+\bar a)n  +E_n,
\end{align}
where $E_n\leq C_1\ln n+C_2 \Gamma_n$,
for some constant $C_1, C_2>0$.

Now we use (\ref{wnEn}) to estimate $u_n$. Note that
 \begin{align}\label{anan+1}
 1-u_n^{-1}&=\frac{v_n}{\tan v_n}\cdot \frac{\bar a(u_n^2+1)}{2w_n} +\cO(\alpha_n^2)\nonumber\\
 &=\frac{\bar a}{(2+\bar a)n}\cdot \frac{v_n}{\tan v_n}\cdot \frac{1}{1+\frac{\bar a( w_1+C_1\ln n+C_2\Gamma_n)}{3n}} +\cO(\alpha_n^2)\nonumber\\
 &= \frac{\bar a}{(2+\bar a)n +E_{2,n}}  +\cO(\alpha_n^2),
 \end{align}
 where $0\leq E_{2,n}<C_3\ln n+C_4\Gamma_n$ for some uniform constants $C_i>0$, $i=1,2,3$. Using the definition of $u_n$, we have obtain
 $$1-u_n^{-1}=\frac{\alpha_{n+1}-\alpha_n}{\alpha_n}=\frac{\bar a}{(2+\bar a)n}+\cO(\frac{1}{n^2\ln n}).$$
 Combining with (\ref{3.7'}), we get
 \beq\label{antanvn}
\frac{\alpha_n}{\tan \gamma_n}=\frac{1}{(2+\bar a)n}+\cO(\frac{1}{n^2\ln n}).
 \eeq

 This implies that
 $$\frac{\bar a}{(2+\bar a)n }  +\cO(\alpha_n^2)\leq u_n-1\leq \frac{\bar a}{(2+\bar a)n +C_3\ln n+C_4 \Gamma_n}  +\cO(\alpha_n^2).$$
Using the definition $\alpha_{n+1}=\alpha_n/u_n$, we get
$$\alpha_n=\alpha_1 \exp\left(-\sum_{i=1}^n \ln (1+ \frac{\bar a}{(2+\bar a)n +E_{2,n}})\right) \sim \alpha_1 n^{-\bar a/(2+\bar a)}.$$
Thus implies that  for $n\in [1,N_1]$,
\beq\label{alphan}
\alpha_n\sim \alpha_1 n^{-\bar a/(2+\bar a)}\sim \alpha_1 n^{-\frac{\beta-1}{2\beta-1}}.
\eeq
Combining with (\ref{wnEn}), we have
$$w_n=(w_1+(2+\bar a)n) \alpha_n \sim (2+\bar a)  \alpha_1 n^{\frac{2}{2+\bar a}}\sim \alpha_1 n^{\frac{\beta}{2\beta-1}}.$$
Since $v_{n}\sim \bar \gamma$, for $n\in[N_1,N_2]$, so $$\alpha_1\sim N_2^{-\frac{2}{2+\bar a}}\sim N_1^{-\frac{2}{2+\bar a}}\sim N^{-\frac{\beta}{2\beta-1}}.$$Thus we have $N_1\sim N_2\sim N$.

This further implies that for $n\in [2,N_2]$, we have
\beq\label{N1N2}\alpha_n\sim \frac{1}{(n^{\beta-1}N^{\beta})^{\frac{1}{2\beta-1}}},\,\,\,\,\,\gamma_n\sim v_n\sim (n N^{-1})^{\frac{\beta}{2\beta-1}}.\eeq
In particularly, for $n\in [N_1,N_2]$, using the above fact that $N_1\sim N_2\sim N$, we have
\beq\label{N1N22}\alpha_n\sim n^{-1}\sim N^{-1},\,\,\,\,\,v_n\sim \bar\gamma.
\eeq

Now we move back to improve the estimation of $A_n$.
 Note that $$C_N= \sin v_{\tilde N_2} s_{\tilde N_2}^{\beta} =\cos\bar\gamma_1 s_{\tilde N_2}^{\beta}\sim N^{-\frac{\beta}{\beta-1}}.$$
Now by (\ref{Hn}), we know that
$$A_n=s_n^{\beta}\sin v_n=C_N+\cO(s_n^{2\beta-1})),$$
for any $n=1,\cdots, \tilde N_2$.

On the other hand, (\ref{en}) implies that for $n=1,\cdots, N_2$,
$$A_n-A_{n+1}=\cO(s_{n}^{3\beta-2}\sin v_n)+\cO(s_n^{3\beta-2}\cos v_n).$$
Thus for $n\in [1,N_1]$, we have
$$A_n=A_{N_1}+\cO(\sum_{k=n}^{N_1}s_{n}^{3\beta-2}\sin v_n)+\cO(\sum_{k=n}^{N_1}s_n^{3\beta-2}\cos v_n)=A_N+\cO(n^{-1}N^{-\frac{\beta}{\beta-1}}),$$
where we have used (\ref{N1N2}) in the last step. Moreover, using (\ref{N1N22}), we also get
$$A_n=s_n^{\beta}\sin v_n=C_N+\cO( N^{-\frac{2\beta-1}{\beta-1}})),$$
for any $n=N_1,\cdots,  N_2$.
Since $v_n=\gamma_n+\alpha_n$, we will now define a new quantity:
\beq\label{defnHn}
H_{N}((r_n,\varphi_n))=|r_n-r_f|^{\beta}\cos \varphi_n= s_n^{\beta}\sin\gamma_n+\cO(s_n^{2\beta}\sin\gamma_n).
\eeq
The above estimation implies that
\begin{align*}
H_{N}((r_n,\varphi_n))&=A_n-s_n^{\beta}(\sin \gamma_n-\sin v_n)\\
&=A_n+2s_n^{\beta} \sin\frac{\alpha_n}{2} \, \cos(\gamma_n+\frac{\alpha_n}{2})\\
&=A_n+\cO(s_n^{2\beta-1})=A_n+\cO(n^{-1}N^{-\frac{\beta}{\beta-1}})\\
&=C_N+\cO(n^{-1}N^{-\frac{\beta}{\beta-1}}).
\end{align*}

For $n\in[1,N_1]$, we denote $\cK_n$ as the curvature of the boundary at $x_n$. Then one can check that by (\ref{z1s}), the curvature satisfies
\beq\label{cKn}\cK_n=(\beta-1)s_n^{\beta-2}+\cO(s_n^{\beta-1})\eeq
Using (\ref{taun}), we get
\begin{align}\label{bartaun}
\frac{2\cK_n\tau_n}{\sin\gamma_n}&=\frac{2(\beta-1) (s_n^{2\beta-2}+s_n^{\beta-2}s_{n+1}^{\beta})}{\beta\sin\gamma_n\sin v_n}\nonumber\\
&=\frac{2(\beta-1) (s_n^{2\beta-2}+s_n^{2\beta-2}+s_n^{\beta-2}(s_{n+1}^{\beta}-s_n^{\beta}))}{\beta\sin\gamma_n\sin v_n}\nonumber\\
&= \frac{\bar a(\alpha_n^2+\alpha_{n+1}^2)}{\sin v_n\sin\gamma_n}+\cO(\alpha_n^3)\nonumber\\
&= \frac{\bar a(1+u_n^{-2})}{ w_n^2} +\cO(\alpha_n^3)\nonumber\\
&=\frac{2\bar a}{(2+\bar a)^2 n^2}+\cO(\alpha_n^3)
\end{align}
where we have used $\bar a=2(\beta-1)/\beta$, as well as (\ref{3.7}) and (\ref{snpower}).

Using (\ref{taun}), we know that
$$\frac{\tau_{n+1}}{\tau_{n}}=\frac{s_{n+1}^{\beta}+s_{n+2}^{\beta}}{s_n^{\beta}+s_{n+1}^{\beta}}\cdot\frac{\sin(\gamma_n+\alpha_n)}{\sin(\gamma_{n+1}+\alpha_{n+1})}.$$
Note that for $n\in[1,N_1]$,
$$E_1:=\frac{s_{n+1}^{\beta}+s_{n+2}^{\beta}}{s_n^{\beta}+s_{n+1}^{\beta}}=\frac{u_{n+1}^{-\frac{\beta}{\beta-1}}+1}{1+u_n^{\frac{\beta}{\beta-1}}}.$$
Thus we have
$$E_1=1-\frac{\beta}{(2\beta-1)n}+\cO(\alpha_n^2).$$
Moreover,
$$E_2:=\frac{\sin(\gamma_n+\alpha_n)}{\sin(\gamma_{n+1}+\alpha_{n+1})}=\frac{n\alpha_n}{(n+1)\alpha_{n+1}}+o(n^{-1})=1-\frac{2}{(2+\bar a)n}+\cO(\alpha_n^2). $$
Thus we have for $n\in[1,N_1]$,
\beq\label{ratiotau}\frac{\tau_{n+1}}{\tau_{n}}=E_1\cdot E_2=1-\frac{4}{(2+\bar a)n}+\cO(\alpha_n^2)=1-\frac{2\beta}{(2\beta-1)n}+\cO(\alpha_n^2).\eeq

Combining with items (1)-(5), we get for $1<n<N_1$,
$$\tau_n\sim \frac{1}{n^{\frac{2\beta}{2\beta-1}}N^{\frac{\beta}{(2\beta-1)(\beta-1)}}}.$$
For $N_1<n<N_3$, we have $\alpha_n\sim N^{-1}$ and $\gamma_n>\bar\gamma,$ which implies that
$$\tau_n\sim s_n\alpha_n\sim N^{-\frac{\beta}{\beta-1}}.$$
Thus the trajectory during each period in the corner series has
the  order of
$$\sum_{n=1}^{N_1}\tau_n\sim N^{-\frac{1}{\beta-1}},\,\,\,\,\,\,\sum_{n=N_1}^{N_2}\tau_n\sim N^{-\frac{1}{\beta-1}}.$$
 \end{proof}

 Due to the time reversibility of billiard dynamics, all the asymptotic formulas obtained for the entering period remain valid for the exiting period.

\section{Hyperbolicity of $(F,M)$}

\subsection{Stable/unstable cones}
We let  $M$ be the collision space defined as in (\ref{defM}), and we define the return time function $R: M\to\mathbb{N}$ as well as the induced map $F: M\to M$ as (\ref{Fdef}). Then $F$ preserves $\mu_M=\mu/\mu(M)$. Clearly, the induced map is indeed a dispersing billiard system. Thus using techniques in Chapter 4 of  \cite{CM}, one can obtain that the system $(F,M)$ is uniformly hyperbolic.

 In this section we investigate the expansion factors for vectors in unstable cones  of the induced system $(F,M,\mu_M)$. We denote $\cK(x)$ as the curvature of the boundary at the base point of $x$.

We first recall that the differential of the billiard map $\cF$ satisfies:
 \begin{align}\label{DTdiff}
D_x \cF=\frac{-1}{\cos\varphi_1}\left(\begin{array}{cc}\tau\cK(r)+\cos\varphi & \tau \\ \tau\cK(r)\cK(r_1)+\cK(r)\cos\varphi_1+\cK(r_1)\cos\varphi & \tau\cK(r_1)+\cos\varphi_1\end{array}\right).
\end{align}

Next we introduce the concept of the wave front. Let $V\in \cT_xM$ be a tangent vector. For $\eps>0$ small, let us consider  an infinitesimal curve $\gamma=\gamma(s)\subset M$, where $s\in (-\eps,\eps)$ is a parameter, such that $\gamma(0)=x$ and $\frac{d}{ds}\gamma(0)=V$. The forward (backward) trajectories of the points $y\in \gamma$, after leaving $M$, make a  bundle of directed lines in $Q$, which is called the forward (backward) ``wave front". Let $\cB=\cB(x)$ ($\cB^-=\cB^-(x)$) be the curvature of the orthogonal cross-section of the forward (backward)  wave front at the point $x$ with respect to the  vector $V$.	
Indeed we have
\beq\label{cBt}
\cB^-(x_1)=\frac{\cB(x)}{1+\tau(x)\cB(x)}=\frac{1}{\tau(x)+\frac{1}{\cB(x)}}
\,\,\,\text{ and }\,\,\,\,\,{\cB}(x)=\cB^-(x)+\tfrac{2\cK(x)}{\cos\varphi}.\eeq
where $x_1= \cF x$.

By our assumption on the table and the definition of $M$,  there exist $\tau_{\min}>0$,   $0<\cK_{\min}<\cK_{\max}<\infty$ such that for any $x\in M$,
\beq\label{takck}\tau(x)>\tau_{\min},\,\,\,\,\,\,\cK_{\min}\leq \cK(x)\leq \cK_{\max}.
\eeq
Denote by $\cV=d\varphi/dr$ the slope of the tangent line of $W$ at $x$. Then  $\cV$ satisfies
\beq\label{cVx}\cV= \cB^{-}\cos\varphi+\cK(r) = \cB
\cos\varphi-\cK(r).\eeq

 We use the cone method developed by Wojtkowski \cite{Wo85} for establishing hyperbolicity for phase points  $x=(r,\varphi)\in M$. In particular we study stable and unstable wave front. The relations in (\ref{cBt}) imply that a dispersing wave front remains bounded away from zero.
  \begin{defn}\label{defn:1}
  The unstable cone $\cC_{x}^u$ contains all tangent vectors based at ${x}$ whose images  generate dispersing wave fronts:
$$\cC^u_x=\{(dr, d\varphi)\in \cT_{x} M\,:\, \cK(r)\leq d\varphi/dr\leq	 \cK(r)+\tau_{\min}^{-1}
\}.
$$
Similarly the stable cones are defined as
 $$\cC^s_x=\{(dr, d\varphi)\in \cT_{x} M\,:\, -\cK(r)\geq d\varphi/dr\geq	 -\cK(r)-\tau_{\min}^{-1}
 \}.$$
 We  say that a smooth curve $W\subset M$ is an unstable
(stable) curve for the  system $(\cF,\cM)$ if at every point $x \in W$ the tangent line
$\cT_x W$ belongs in the unstable (stable) cone $C^u_x$
($C^s_x$). Furthermore, a curve $W\subset  M$ is an unstable
(resp. stable) manifold for the  system $(\cF,\cM)$ if $\cF^{-n}(W)$ is an unstable (resp.
stable) curve for all $n \geq 0$ (resp. $\leq 0$).
  \end{defn}

We consider a short unstable curve
$W\subset M_N$ with equation $\varphi=\varphi(r)$, for some $N\geq 1$. Let $x=(r,\varphi)\in W$ and $V=(dr,d\varphi)$ be
a tangent vector at $x$ of $W$.
Combining with (\ref{DTdiff}) and (\ref{MphiKo}),  we know for any $x\in M$, the slope of the tangent vector of $\cF W$ at $\cF x$ satisfies:
\beq\label{cV1}\cK(\cF x)+\frac{\cos\varphi_{_{K_0}}}{\tau_{\max}+\frac{1}{2K_{\min}}}\leq\cV(\cF x)=\cK(\cF x)+\frac{\cos\varphi(\cF x)}{\tau(x)+\frac{\cos\varphi}{\cK(x)+\cV(x)}}\leq \cK(\cF x)+\frac{1}{\tau_{\min}+\frac{\cos\varphi_{_{K_0}}}{2\cK_{\max}+\tau_{\min}^{-1}}}.\eeq
This implies that for any $x\in M$, the unstable cone $\cC^u_x$ is  already strictly invariant under $\cF$. Thus we get
$DF \cC^u_{F x}\subset \cC^u_{Fx}$.
 Similarly one can check that $D F(\cC^s_x)\supset \cC^s_{ F x}$.

Next we will show that  any unstable vector in $\cC^u$ gets uniformly expanded under $DF$. We introduce two metrics on the tangent space $\cT M$. Let $x\in M_N$, for any $N\geq 1$.
\begin{enumerate}
\item[(I)]
The first one is the so-called \emph{p-metric} on  vectors $dx = (dr, d\varphi)$ by
\beq\label{pnorm} |dx|_p = \cos \varphi \, |dr|.\eeq
 Put
$\cF x=(r',\varphi')$ and $dx'=(dr',d\varphi')=D \cF (dx) $, for $n\geq 1$.  The expansion factor is \beq
\frac{|D\cF (dx)|_p}{|dx|_p}= 1+\tau(x){\cal
	\cB}(x)\geq 1+\frac{\tau(x)\cK(r)}{\cos\varphi}.
			 \label{DTvu1}
\eeq

\item[(II)] Now consider the expansion factor in the \emph{Euclidean metric} $|dx|^2 =
(dr)^2 + (d\varphi)^2$.

 Note that for any $x=(r,\varphi)\in M_N$, with $N$ large enough,  $\cos\varphi$  is approximately $1$ by our assumptions on the billiard table. Moreover, for any $dx=(dr,d\varphi)\in\cC^u_x$,  we have  $d\varphi/d r$ is approximately $1$. Thus  for any vector $dx\in \cC^u_x$,
 \beq
\label{penorm}
\frac{|D\cF (dx)|}{|dx|} =
\frac{|D\cF (dx)|_p}{|dx|_p}\,
	\frac{\cos \varphi}{\cos\varphi'}\, \frac{\sqrt{1+(\frac{d\varphi'}{dr'})^2}}{\sqrt{1+{(\frac{d\varphi}{dr})^2}}}.\eeq
\end{enumerate}

 Hence, the $p$-norm and the Euclidian norm are equivalent for tangent vectors  at $x$ and $\cF x$.
We let $x\in M_N$ and define $x_n=(s_n,\varphi_n)=\cF^n x$, for $n=1,\cdots, N$, corresponding to iterations for a corner series of length $N$ as introduced in the above section.

\begin{proposition}\label{KBbd} For any $N>K_0$, any unstable curve $W\subset M_N$  such that $F^{-1}W$ is also unstable, let  $x=(r,\varphi)\in W$.   The total expansion factor for unstable
vectors in the course of the corner series of $N$ collisions has lower bound
$$\Lambda(x):=\frac{|D F (dx)|}{|dx|}\geq CN^{1+\frac{\beta}{(2\beta-1)(\beta-1)}},$$where $C>0$ is a constant. Its precise asymptotic is
$$\Lambda(x)\sim N^{1+\frac{\beta}{(2\beta-1)(\beta-1)}}\left(1+\frac{N^\frac{-\beta}{2\beta-1}}{\cos\varphi_1}\right)\left(1+\frac{N^{-\frac{\beta(\beta-2)}{(2\beta-1)(\beta-1)}} } {\cos\varphi_N}\right).$$
 \end{proposition}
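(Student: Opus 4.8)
\section*{Proof proposal for Proposition \ref{KBbd}}

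The plan is to telescope the Euclidean expansion of the return map over the $N$ collisions of the corner series into a product of one-step factors. Writing $x_0=x,\;x_n=\cF^n x,\;x_N=Fx$, I would apply the $p$-metric expansion \eqref{DTvu1} together with the $p$-to-Euclidean conversion \eqref{penorm} at each collision and multiply; the $\cos\varphi$ and slope factors telescope, leaving
\[
\Lambda(x)=\Big[\prod_{n=0}^{N-1}\big(1+\tau_n\cB_n\big)\Big]\cdot\frac{\cos\varphi_0}{\cos\varphi_N}\cdot\frac{\sqrt{1+\cV_N^2}}{\sqrt{1+\cV_0^2}},
\]
where $\cB_n=\cB(x_n)$ is the forward wave-front curvature and $\tau_n$ the free path from $x_n$ to $x_{n+1}$. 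Thus the whole estimate reduces to controlling the $p$-expansion product $P_N:=\prod_n(1+\tau_n\cB_n)$ and the conversion factor, the latter being where the dependence on the (grazing) endpoint angles $\varphi_1,\varphi_N$ survives through \eqref{cVx}.

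Next I would set $b_n:=\tau_n\cB_n$ and turn the wave-front relations \eqref{cBt} into the scalar recursion
\[
b_{n+1}=\frac{\tau_{n+1}}{\tau_n}\,\frac{b_n}{1+b_n}+\frac{2\cK_{n+1}\tau_{n+1}}{\cos\varphi_{n+1}},
\]
and feed in the quantitative output of Proposition \ref{prop3}: the step ratio $\tau_{n+1}/\tau_n$ from \eqref{ratiotau}, the curvature–path product $\tfrac{2\cK_n\tau_n}{\sin\gamma_n}$ from \eqref{bartaun}, and the sizes of $\tau_n,\cK_n,\gamma_n,\alpha_n,s_n$ in each of the entering, turning and exiting periods. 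Solving the recursion to leading order, one finds $b_n\sim \mathrm{const}/n$ in the entering period and (by the time reversibility noted after Proposition \ref{prop3}) $b_n\sim \mathrm{const}/(N-n)$ in the exiting period, while $b_n=\cO(N^{-1})$ throughout the turning period. Correspondingly the curvature $\cB_n$ grows from $\cO(1)$ at entrance, reaches its maximum $\sim N^{1/(\beta-1)}$ near the turning point $n\sim N/2$, and then relaxes.

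Taking logarithms, $\ln P_N=\sum_n\ln(1+b_n)$ splits along the three periods: the entering and exiting sums each behave like $\mathrm{const}\cdot\ln N$ and contribute explicit powers of $N$, the turning sum is $\cO(1)$, and the two long free flights into and out of the cusp—where $\tau\sim1$ multiplies the large accumulated $\cB\sim N^{\beta/((2\beta-1)(\beta-1))}$—contribute the remaining power. Collecting these, together with the conversion factor, I expect the exponent $1+\frac{\beta}{(2\beta-1)(\beta-1)}$ and the two corrections $\big(1+\tfrac{N^{-\beta/(2\beta-1)}}{\cos\varphi_1}\big)$ and $\big(1+\tfrac{N^{-\beta(\beta-2)/((2\beta-1)(\beta-1))}}{\cos\varphi_N}\big)$ to emerge; the numerators are exactly the generic sizes $\gamma_1\sim N^{-\beta/(2\beta-1)}$ and $\cK_1\sim N^{-\beta(\beta-2)/((2\beta-1)(\beta-1))}$ at the first grazing collision, so these factors record how the expansion is amplified when the actual entrance/exit angle is more grazing than typical. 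Since both correction factors are $\ge 1$, the lower bound $\Lambda(x)\ge CN^{1+\beta/((2\beta-1)(\beta-1))}$ follows at once.

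The delicate point will be the analysis at the two grazing ends of the corner series. There $\cos\varphi_n=\sin\gamma_n\to 0$, the individual factors $1+\tau_n\cB_n$ become large, and $\cB_n$ is simultaneously near its maximum, so the convenient linearization $\ln(1+b_n)\approx b_n$ is not uniformly valid and the cone bounds \eqref{takck} alone do not suffice. One must therefore retain the exact constants in the recursion—hence the sharp forms of \eqref{bartaun} and \eqref{ratiotau}—and sum a logarithmically divergent series with its correct coefficient, while keeping the endpoint angles $\varphi_1,\varphi_N$ explicit rather than bounding them away as in the bulk. Managing this endpoint bookkeeping is what produces both the sharp exponent and the two correction factors.
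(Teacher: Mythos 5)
Your proposal follows essentially the same route as the paper's appendix proof: reduce to the $p$-metric product $\prod_n(1+\tau_n\cB_n)$ (the norms being equivalent at the endpoints in $M$), study $\lambda_n=\tau_n\cB_n$ through the recursion (\ref{cBXmtau}) with the inputs (\ref{bartaun}) and (\ref{ratiotau}) from Proposition \ref{prop3}, obtain $\lambda_n\sim 1/n$, $\cO(1/N)$, $1/(N-n)$ in the entering, turning and exiting periods, and extract the last power of $N$ from the exit flight together with the grazing-endpoint correction factors. One caveat to keep in mind when you actually solve the recursion: the entering and exiting logarithmic sums carry \emph{different} coefficients ($A=\frac{\beta-1}{2\beta-1}$ versus $A=\frac{\beta}{2\beta-1}$, since time reversal flips the sign of the $B/m$ term, giving $A^2+A(B-1)=D$ on entry but $A^2-A(B-1)=D$ on exit), and only the exit flight --- not the entry flight, where the fresh wave front has $\cB\sim 1$ --- multiplies the accumulated curvature $\sim N^{\frac{\beta}{(2\beta-1)(\beta-1)}}$; these two asymmetries are precisely what make the exponents add up to $1+\frac{\beta}{(2\beta-1)(\beta-1)}$.
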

The proof of this proposition is rather lengthy, so we put it in the appendix.

\section{Distribution of the return time function}
 In this section, we use the results of the previous sections to analyze the distribution of the return time function $R$, together with its level set $M_n$, $n\geq K_0$,
which consists of points whose trajectories go down a cusp and experience
there a corner series of exactly $n-1$ collisions.
We will use standard facts of the theory of dispersing billiards \cite{BSC90,
BSC91, C99, CM,CM05}. For example, the domains $M_n$ are bounded by singularity curves of the map $F$. These singular curves are made of unstable curves and the preimage of $\partial M$. Due to the time-reversibility of the billiard
dynamics, $F M_n$ is obtained by reflecting $M_n$ across the line
$\varphi=0$. Moreover, a point $(r, \varphi)$ is a singularity point for the map $F$ (i.e. $F(r,\varphi)$ or its differential  is not well-defined ) if
and only if $(r, -\varphi)$ is a singularity point for its inverse $F^{-1}$ (i.e. $F^{-1}(r,-\varphi)$ or its differential is not well-defined).

By our assumption, we know the singular trajectory running out of the cusp at $P$ will land on the point $D$ on the opposite side $\Gamma_3$ perpendicularly. Let $x_D=(r_D, 0)$, where $r_D$ is the $r$-coordinate of the singular point $x_D$. Indeed $x_D$ belongs to a singular curve, which we call  $s_0$, that is made of all  grazing collisions on $\partial Q$. One can check using (\ref{DTdiff}) that the slope of the tangent vector at $x=(r,\varphi)\in s_0$  satisfies
$$d\varphi/dr=-(K(x)+\cos\varphi/\tau(x)).$$
Thus in the vicinity of $x_D$, the curve  $s_0$ can be approximated by a line with slope $-K(x_D)-l_D^{-1}$, where $l_D$ is the distance between the cusp $P$ and the base point of $x_D$ in the billiard table.

The singular curves of $F$ near $x_D$ consist of   two
symmetric  sequences of singularity curves $\{s'_{n}\}$
and $\{s''_{n}\}$, approaching  $x_D$ from both sides of $s_0$.  More precisely, $s'_n$ consists of points whose trajectories enter the cusp by hitting $\Gamma_1$ first, and the last collision in the corner series is grazing. Similarly,  $s_n''$ hits  $\Gamma_2$ first, where the last collision  is grazing when exiting the cusp.  Denote by $M'_n$ the strip bounded between
$s'_{n}$, $s'_{n+1}$; and $M''_n$ bounded by $s''_n$ and $s''_{n+1}$. Then  $M_n=M'_n\cup M''_n$. By the symmetric property, it is enough to concentrate on $M'_n$.

\begin{figure}[h]
\center \psfrag{M}{$M$}
\psfrag{g1}{\scriptsize$pi/2$}
\psfrag{g3}{\scriptsize$\pi/2$}
\psfrag{xp}{\scriptsize$x_D
$}
\psfrag{Hn}{\scriptsize$H_{n}$}\psfrag{fMn''}{\scriptsize$\cF M_n''$}
\psfrag{mn''}{\scriptsize$M_n''$}\psfrag{cM}{\scriptsize$\cM$}\psfrag{M}{\scriptsize$$}
\psfrag{mn'}{\scriptsize$M_n'$}\psfrag{rf}{\scriptsize$r_f$}
\psfrag{s0}{\scriptsize$s_0$}\psfrag{fmn'}{\scriptsize$\cF M_n'$}
\psfrag{g'1}{\scriptsize$\varphi$}\psfrag{r}{\scriptsize$r$}\psfrag{s2}{\scriptsize$S_m$}
\includegraphics[width=6in,height=3in]{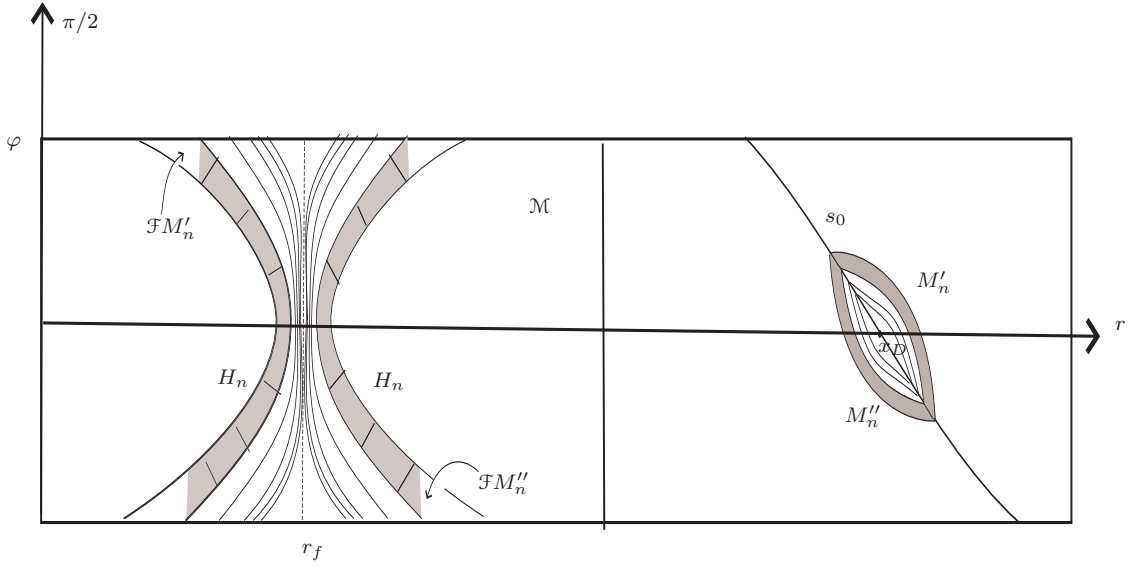}
\renewcommand{\figurename}{Figure.}
\caption{Singularity curves of $F$ in the  vicinity of $x_D$; and their forward images under $\cF$ near the cusp in $\cM$ (that are bounded by $H_n$).}\label{FigSing1}
\end{figure}

Fig \ref{FigSing1}. shows the structure of the singular curves near $x_D$.
In order to determine the rates of the
decay of correlations we need certain quantitative estimates on
the measure of the regions $M_n$, or $\{R\geq n\}$.
Note that for any $N\geq K_0$, the forward images $\{\cF^k M_N, k=1,\cdots, N-1\}$ fill entirely the regions squeezed between the two curves $H_N$ and $H_{N+1}$. By Proposition \ref{prop3}, we know that
the set $\cup_{n=1}^{N-1}  \cF^n M_N'\subset\cM$  is bounded by the  line $r=r_f$, $\varphi=\pi/2$  and a curve described implicitly by the equation of $H_N$:
 \beq\label{rbeta}r^{\beta}= \frac{C_N}{ \sin\varphi}(1+\cO(r^{2\beta-1}C_N^{-1}))\eeq

Equivalently $H_N$ has equation given by:
$$r= \frac{C_N^{\frac{1}{\beta}}}{ (\sin\varphi)^{\frac{1}{\beta}}}+\cO\left(\frac{r^{2\beta-1}}{C_N^{1-\frac{1}{\beta}}\sin^{\frac{1}{\beta}}\varphi}\right)$$
 We extend the definition of $R$ from $M$ to $\cM$, such that for any $x\in \cM$,
  $$R(x)=\min\{n\geq 1\,: \cF^n x\in M\}.$$ Using Propostion \ref{prop3}, we have for $N>K_0$,
 \begin{align}\label{muRn}
 \mu(x\in \cM\,:\, R\geq N)&=\sum_{m\geq N}\sum_{k=0}^{m-N} \mu(\cF^k M_m)\sim \sum_{m\geq N}\sum_{k=0}^{m-1} \mu(\cF^k M_m)\nonumber\\
 &\sim\int_{0}^{\pi}\frac{C_N^{\frac{1}{\beta}}}{ \sqrt[\beta]{\sin\varphi}}\,  \sin\varphi\,d\varphi=C_N^{\frac{1}{\beta}}\int_{0}^{\pi} (\sin\varphi)^{1-\frac{1}{\beta}}\,d\varphi\sim N^{-\frac{1}{\beta-1}}.\end{align}

Another method of calculation of the measure of $M_N$ relies on  the
factor of expansion of unstable manifolds $W \subset M_N$  under the map $F$. We use Proposition \ref {KBbd} to estimate $M_N$.

\begin{lemma}\label{muC} For any $N\geq 1$, $M_N$  has measure  $\sim N^{-2-\frac{1}{\beta-1}}$. Thus $\mu_M(x\in M\,:\, R\geq N)\sim N^{-1-\frac{1}{\beta-1}}$; and  $\mu(x\in \cM\,:\, R\geq N)\sim N^{-\frac{1}{\beta-1}}$.
\end{lemma}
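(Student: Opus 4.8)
The three assertions are not independent, so the plan is to isolate the single estimate $\mu(M_N)\sim N^{-2-\frac1{\beta-1}}$ and obtain the other two from it. The last estimate, $\mu(x\in\cM:R\ge N)\sim N^{-\frac1{\beta-1}}$, is already the content of (\ref{muRn}). For the middle one, note that the level sets of the return time are precisely the $M_m$ (the return time to $M$ equals the length of the corner series), so $\{x\in M:R\ge N\}=\bigsqcup_{m\ge N}M_m$ and therefore $\mu_M(R\ge N)=\mu(M)^{-1}\sum_{m\ge N}\mu(M_m)$. Granting $\mu(M_m)\sim m^{-2-\frac1{\beta-1}}$ and comparing with $\int_N^\infty x^{-2-\frac1{\beta-1}}\,dx$ gives the order $N^{-1-\frac1{\beta-1}}$, with $\mu(M)\sim1$. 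As a consistency check, since each $\cF^k$ preserves $\mu$ and a point of $M_m$ contributes the images $\cF^kx$, $0\le k\le m-N$, to $\{x\in\cM:R\ge N\}$, one also has $\mu(x\in\cM:R\ge N)=\sum_{m\ge N}(m-N+1)\mu(M_m)\sim N^{-\frac1{\beta-1}}$, recovering (\ref{muRn}).

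It remains to prove $\mu(M_N)\sim N^{-2-\frac1{\beta-1}}$, and the cleanest route I would take rests on an exact conservation identity. The images $\cF^kM_N$, $k=1,\dots,N-1$, are pairwise disjoint, because distinct collision indices of a single length-$N$ corner series sit at distinct base points; by the paragraph preceding (\ref{rbeta}) their union is exactly the region $\cR_N$ enclosed between the almost-invariant curves $H_N$ and $H_{N+1}$. Since $\cF$ preserves $\mu$, this gives the exact identity $(N-1)\,\mu(M_N)=\mu(\cR_N)$. Using the explicit equation of $H_N$, the $r$-width of $\cR_N$ at fixed $\varphi$ equals $\bigl(C_N^{1/\beta}-C_{N+1}^{1/\beta}\bigr)(\sin\varphi)^{-1/\beta}$; with $C_N\sim N^{-\frac{\beta}{\beta-1}}$ from Proposition~\ref{prop3} one has $C_N^{1/\beta}-C_{N+1}^{1/\beta}\sim N^{-1-\frac1{\beta-1}}$, and integrating against the weight $\sin\varphi\,d\varphi$ as in (\ref{muRn}) yields $\mu(\cR_N)\sim N^{-\frac{\beta}{\beta-1}}$. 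Hence $\mu(M_N)\sim N^{-1}\cdot N^{-\frac{\beta}{\beta-1}}=N^{-2-\frac1{\beta-1}}$, using $1+\frac{\beta}{\beta-1}=2+\frac1{\beta-1}$.

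The route indicated in the text computes $\mu(M_N)$ intrinsically from the expansion factor of Proposition~\ref{KBbd}. Here I would disintegrate $\mu|_{M_N}$ along the unstable foliation, $\mu(M_N)\sim\int|W_\xi|_p\,d\nu(\xi)$; since $M_N\subset M$ keeps $\cos\varphi$ bounded below by (\ref{MphiKo}) and the cone slopes are of order one, $p$-length and Euclidean length of the fibers are comparable, and likewise near $x_D$ where $\cos\varphi\sim1$. Proposition~\ref{KBbd} controls the stretching of a fiber across the corner series, $|FW_\xi|\sim\Lambda|W_\xi|$ with $\Lambda\sim N^{1+\frac{\beta}{(2\beta-1)(\beta-1)}}$, while time-reversibility gives the congruence $FM_N=\rho M_N$, the reflection across $\varphi=0$; these two facts fix the ratio of the unstable and transverse linear scales of the strip $M_N$, and feeding one scale from Proposition~\ref{prop3} (the entering data $\alpha_1,\gamma_2\sim N^{-\frac{\beta}{2\beta-1}}$) closes the computation and reproduces the same order.

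The main obstacle lies in the expansion route: the delicate point is to control the conditional transverse measure $\nu$ — equivalently, the spacing of consecutive singularity curves $s_N'$, $s_{N+1}'$ and the correct identification of which linear scale of $M_N$ is the short (unstable) one — together with uniform bounded distortion of $F$ along a corner series whose length grows with $N$, so that the disintegration holds with constants independent of $N$. The telescoping route avoids this entirely, at the cost of one check: because the $H_N$ are only almost invariant (Proposition~\ref{prop3}(6)), I must verify that the $\cO(\cdot)$ corrections to $H_N\equiv C_N$ displace the boundary of $\cR_N$ by an amount that perturbs $\mu(\cR_N)$ only at subleading order, which the error estimates of Proposition~\ref{prop3} provide.
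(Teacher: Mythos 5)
Your reductions (summing $\mu(M_m)\sim m^{-2-\frac{1}{\beta-1}}$ to get the two tail estimates) are fine, and your second sketch is in fact the paper's actual argument: the paper computes the two linear dimensions of $M_N$ directly — the long side is $\sim N^{-\frac{\beta}{2\beta-1}}$ because the extreme points of $M_N$ on $s_0$ come from trajectories whose first collision in the cusp is grazing, at distance $\sim\alpha_1\sim N^{-\frac{\beta}{2\beta-1}}$ from $x_D$; then, since time reversibility forces the short sides of $M_N$ to be stretched into the long sides of $FM_N$, the short side is $\sim N^{-\frac{\beta}{2\beta-1}}$ divided by the expansion factor $N^{1+\frac{\beta}{(2\beta-1)(\beta-1)}}$ of Proposition \ref{KBbd}, and $\mu(M_N)\sim$ (length)$\times$(width) with density $\cos\varphi\sim 1$. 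You only sketch this route and defer its completion, so everything hinges on your primary ``telescoping'' route — and that route has a genuine gap.

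The gap is in the deferred ``one check.'' The identity $(N-1)\mu(M_N)=\mu(\cR_N)$ is sound (disjointness of the $\cF^kM_N$ follows from $R\equiv N$ on $M_N$), but the evaluation of $\mu(\cR_N)$ by differencing the curves $H_N$ and $H_{N+1}$ is not supported by Proposition \ref{prop3}. First, the proof of Proposition \ref{prop3}(6) only establishes $C_N=s_{\tilde N_2}^{\beta}\cos\bar\gamma_1\sim N^{-\frac{\beta}{\beta-1}}$, i.e.\ an asymptotic up to bounded multiplicative constants; differencing two sequences known only up to $\sim$ is invalid — $C_N^{1/\beta}-C_{N+1}^{1/\beta}$ need not be $\sim N^{-1-\frac{1}{\beta-1}}$ and could a priori even change sign. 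Second, even granting an exact power law for $C_N$, the curve $H_N$ is located only up to the error in Proposition \ref{prop3}(6), which at collision index $n$ is $\cO(n^{-1}N^{-\frac{\beta}{\beta-1}})$, i.e.\ relative error $\cO(n^{-1})$; near the entrance and exit of the cusp ($n=\cO(1)$) this is order one, vastly exceeding the relative gap $\sim N^{-1}$ between $H_N$ and $H_{N+1}$. Quantitatively, using $\gamma_n\sim(n/N)^{\frac{\beta}{2\beta-1}}$ to convert the error bound into the angular variable, the integrated positional uncertainty of $\cR_N$ is
\begin{equation*}
\int_{\gamma_1}^{\bar\gamma} C_N^{1/\beta}\,\gamma^{-1/\beta}\cdot n(\gamma)^{-1}\cdot \gamma\,d\gamma
\;\sim\; \frac{C_N^{1/\beta}}{N}\int_{\gamma_1}^{\bar\gamma}\frac{d\gamma}{\gamma}
\;\sim\; \frac{C_N^{1/\beta}\ln N}{N},
\end{equation*}
which is larger than the main term $\bigl(C_N^{1/\beta}-C_{N+1}^{1/\beta}\bigr)\int(\sin\varphi)^{1-1/\beta}d\varphi\sim C_N^{1/\beta}N^{-1}$ by a factor $\ln N$. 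So the error estimates of Proposition \ref{prop3} do \emph{not} provide the check you rely on: at best you get $\mu(M_N)=\cO(N^{-2-\frac{1}{\beta-1}}\ln N)$ and no matching lower bound. (This is precisely why the paper uses the $H_N$-curves only for the cumulative quantity $\mu(\cM:R\geq N)$ in (\ref{muRn}), where the $\ln N/N$ relative error is harmless, and switches to the expansion-factor argument for the individual cell $M_N$.) To repair your proof you must either sharpen Proposition \ref{prop3}(6) to control $H_N-H_{N+1}$ with errors $o(N^{-1}C_N)$ uniformly down to $n=\cO(1)$, or carry out the expansion route you sketched, which is the paper's proof.
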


\begin{proof}

To determine the dimensions of the strips $M_N'$, observe that the two intersection
 points in $s'_N$ with the curve $s_0$ are  located farthest from the central point
$x_D$, which are made by trajectories whose very first collision in the cusp is grazing. Let $x$ be such an end point, and $x_1=\cF x$. By our assumption, we know that the tangent vector at $x_1=(r_1,\varphi_1)$ makes an angle approximately $r_1^{\beta-1}$ with the horizontal line. By Proposition \ref{prop3}, we know that $\alpha_1\sim N^{-\frac{\beta}{2\beta-1}}$. Thus we conclude
that the trajectory originates at the distance $\sim N^{-\frac{\beta}{2\beta-1}}$ from the point $x_D$. Thus
the diameter of $M_N$ (i.e. the `length' of these strips) is $\sim N^{-\frac{\beta}{2\beta-1}}$.

Due to the time-reversibility of the billiard dynamics, the
singular curves in $F M$ have a similar structure. Furthermore,  the  short sides of $M_N'$ stretch completely  under $F M_N'$,  and are transformed into long sides of $FM_N'$.
Let $W\subset M_N'$ be a  short unstable curve  that stretches completely in $M_N'$ between two long sides. Then $|FW|\sim N^{-\frac{\beta}{2\beta-1}}$.
Thus by Proposition  \ref{KBbd}, the expansion factor on $W$ is  $\sim N^{1+\frac{\beta}{(2\beta-1)(\beta-1)}}$. This implies that the width is:
\beq\label{hmk}
|W|\sim N^{-\frac{\beta}{2\beta-1}}/N^{1+\frac{\beta}{(2\beta-1)(\beta-1)}}\sim N^{-1-\frac{\beta}{2\beta-1}-\frac{\beta}{(2\beta-1)(\beta-1)}}.\eeq
 Note that the length of $M_N'$ is $\sim N^{-\frac{\beta}{2\beta-1}}$, and  the density on  $M_N'$
is $\sim \cos\varphi\sim 1$, as the  collision at $x$ is almost perpendicular.
 According to  (\ref{hmk}),  the
measure of $M_N$ is of order
$$\mu_M(M_N)\sim N^{-1-\frac{\beta}{(2\beta-1)(\beta-1)}}\cdot N^{-\frac{2\beta}{2\beta-1}}=N^{-2-\frac{1}{\beta-1}}.$$

\end{proof}

 It is
immediate that for any sufficiently  large $N$,  \begin{align}\label{Mpol2}
 \mu_M&(x\in M:\ R(x)\geq N)
	\sim    \,  N^{-1-\frac{1}{\beta-1}}\,\,\,\,\,\,\text{ and }\,\,\,\,\,\,\mu(x\in \cM:\ R(x)\geq N)
	\sim    \,  N^{-\frac{1}{\beta-1}}.\end{align}

This verifies condition (\textbf{F2}) with $a=\frac{1}{\beta-1}$.

\section{Exponential decay rates for the reduced system}

According to the general scheme proposed in Section 2, we  need to check condition (\textbf{F1}), i.e.,  prove that the induced system $(F,M,\hat\mu)$ enjoys exponential decay of correlations.
Here we use a simplified  method to prove exponential decay of
correlations for our reduced billiard map. It is mainly based on
recent results in \cite{ Y98, C99, CZ}.

Since billiards have singularities, if the orbit of $x$ approaches  the singularity set $\cS_1$ too fast under $F$, then $x$ may not have a stable/unstable manifold.  The situation is kind of complicated here as we have  accumulated sequences of new types of singular curves. Indeed we will first show that a small neighborhood of the singular set has small measure.
\begin{lemma}  For any $\delta>0$, the $\delta-$neighborhood of $\cS_{\pm 1}$ has measure:
 \beq\label{mu0m}\mu(B_{\delta}(\cS_{\pm 1}^{ }))\leq C  \delta^{\frac{\beta(2\beta-1)}{3\beta^2-3\beta+1}}.\eeq
  Here $B_{\delta}(\cS_{\pm 1}^{ })=\{x\in M\,:\, d_M(x,\cS_{\pm 1}^{ })\leq \delta\}$ for any $\delta>0$, and $C>0$ is a constant, where $d_M(\cdot,\cdot)$ is the distance in $M$.
\end{lemma}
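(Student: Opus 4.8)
The goal is to estimate the measure of a $\delta$-neighborhood of the singular set $\cS_{\pm 1}$, and the exponent $\frac{\beta(2\beta-1)}{3\beta^2-3\beta+1}$ must emerge from balancing the two geometric scales attached to the strips $M_N'$. The plan is to decompose $\cS_{\pm 1}$ into the ``old'' dispersing-billiard singularities (which behave as in the uniformly hyperbolic theory of \cite{CM}) and the new accumulating families $\{s_N'\}, \{s_N''\}$ that pile up near $x_D$. For the former, standard estimates give a neighborhood measure that is $\cO(\delta)$ or better, so the dominant contribution comes from the cusp-generated curves. First I would fix the threshold index $N_\delta$ separating the strips $M_N'$ that are \emph{entirely} swallowed by the $\delta$-neighborhood (because their own width is smaller than $\delta$) from those that are merely grazed on their boundary.

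The key quantitative inputs are from Lemma \ref{muC} and the proof there: the strip $M_N'$ has length (long direction) $\sim N^{-\frac{\beta}{2\beta-1}}$ and width (short/unstable direction) $\sim N^{-1-\frac{\beta}{2\beta-1}-\frac{\beta}{(2\beta-1)(\beta-1)}}$, with total measure $\sim N^{-2-\frac{1}{\beta-1}}$. Next I would split the count at the critical $N_\delta$ determined by setting the \emph{width} of $M_N'$ equal to $\delta$, i.e.
\beq\label{Ndeltawidth}
N_\delta^{-1-\frac{\beta}{2\beta-1}-\frac{\beta}{(2\beta-1)(\beta-1)}}\sim \delta,\qquad
\text{so}\qquad N_\delta\sim \delta^{-\frac{1}{1+\frac{\beta}{2\beta-1}+\frac{\beta}{(2\beta-1)(\beta-1)}}}.
\eeq
For $N\geq N_\delta$ the whole strip $M_N'$ lies within distance $\delta$ of its bounding singular curves, so these strips are counted with their full measure, summing to $\sum_{N\geq N_\delta} N^{-2-\frac{1}{\beta-1}}\sim N_\delta^{-1-\frac{1}{\beta-1}}$. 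For $N<N_\delta$ the strip is wider than $\delta$, so only a tube of width $\delta$ along each of its two long sides (of length $\sim N^{-\frac{\beta}{2\beta-1}}$) lies in the neighborhood, contributing measure $\sim \delta\cdot N^{-\frac{\beta}{2\beta-1}}$ per strip; summing $\sum_{N<N_\delta}\delta\, N^{-\frac{\beta}{2\beta-1}}$. Since $\frac{\beta}{2\beta-1}<1$, this sum behaves like $\delta\cdot N_\delta^{1-\frac{\beta}{2\beta-1}}=\delta\,N_\delta^{\frac{\beta-1}{2\beta-1}}$.

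Then I would check that the two contributions balance at the same power of $\delta$, which is the mechanism producing the stated exponent: substituting \eqref{Ndeltawidth} into both $N_\delta^{-1-\frac{1}{\beta-1}}$ and $\delta\,N_\delta^{\frac{\beta-1}{2\beta-1}}$ should yield $\delta$ to the power $\frac{\beta(2\beta-1)}{3\beta^2-3\beta+1}$; indeed $1+\frac{\beta}{2\beta-1}+\frac{\beta}{(2\beta-1)(\beta-1)}=\frac{3\beta^2-3\beta+1}{(2\beta-1)(\beta-1)}$, so that $N_\delta\sim\delta^{-\frac{(2\beta-1)(\beta-1)}{3\beta^2-3\beta+1}}$, and feeding this into $N_\delta^{-1-\frac{1}{\beta-1}}=N_\delta^{-\frac{\beta}{\beta-1}}$ gives exactly $\delta^{\frac{\beta(2\beta-1)}{3\beta^2-3\beta+1}}$. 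I expect the main obstacle to be the bookkeeping near $x_D$ itself: verifying that the new singular curves $s_N'$ are genuinely $\cO(\delta)$-close to lines with controlled slope (so that ``distance in $M$'' to them is comparable to the transversal width coordinate used above), and confirming that the accumulation of infinitely many curves at $x_D$ does not create extra measure beyond the geometric-series tail $N\geq N_\delta$. This requires the transversality/slope control from \eqref{DTdiff} and the approximation of $s_0$ near $x_D$, together with the fact that the strips $M_N'$ are nested and disjoint so their neighborhoods do not overlap wastefully; once that is established, the two-regime sum above closes the estimate with the claimed exponent, and the symmetric family $s_N''$ and the inverse set $\cS_{-1}$ are handled identically by the reflection symmetry $\varphi\mapsto-\varphi$.
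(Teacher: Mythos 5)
Your proposal is correct and follows essentially the same route as the paper's proof: you split at the same threshold $N_\delta$ determined by equating the strip width $N^{-1-\frac{\beta}{2\beta-1}-\frac{\beta}{(2\beta-1)(\beta-1)}}$ with $\delta$, count the full measure of the strips with $N\geq N_\delta$, and add $\delta$-tubes of length $\sim N^{-\frac{\beta}{2\beta-1}}$ along the remaining strips, exactly as in the paper. Your verification that both contributions balance at $\delta^{\frac{\beta(2\beta-1)}{3\beta^2-3\beta+1}}$ is a correct (and slightly more explicit) rendering of the paper's concluding estimate.
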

\begin{proof}
For any given small $\delta>0$,
we first need to find the smallest $N_{\delta}$ such that $\cup_{n\geq N_{\delta}} M_n\subset \mu_M(B_{\delta}(\cS_{ 1}^{ }))$.
According to Lemma \ref{muC}, the width of $M_N$ is approximately $\cO(N^{{-1-\frac{\beta}{2\beta-1}-\frac{\beta}{(2\beta-1)(\beta-1)}}})$. Thus we get $N_{\delta}=\delta^{-\frac{1}{1+\frac{\beta}{2\beta-1}+\frac{\beta}{(2\beta-1)(\beta-1)}}}$. This implies that
$$\mu_M(\cup_{n\geq N_{\delta} }M_n)\sim \delta^{\frac{\beta(2\beta-1)}{3\beta^2-3\beta+1}}.$$
Thus we have
$$\mu(B_{\delta}(\cS_{\pm 1}^{ }))\leq \mu(\cup_{n\geq N_{\delta} }M_n)+ C\sum_{n=1}^{N_{\delta}} \delta n^{-\frac{\beta}{2\beta-1}}\sim \delta^{\frac{\beta(2\beta-1)}{3\beta^2-3\beta+1}}.$$

\end{proof}

The above lemma  implies that almost every point in $M$ has a regular stable (resp. unstable) manifold and there are plenty of reasonable long stable (resp. unstable)  manifolds for $F$.   These manifolds are denoted as $\cW^{s/u}$. If the forward image of $x\in W\in\cW^u$ is almost tangential, then the expansion factors along  $W$ may be highly nonuniform. To overcome this difficulty we divide $\cM$ into horizontal strips as introduced in \cite{BSC90, BSC91}.  More precisely, one divides $\cM $ into countably many sections (called
\emph{homogeneity strips}) defined by
$$
	\bH_k=\{(r,\varphi)\in \cM \colon \pi/2-k^{-2}<\varphi <\pi/2-(k+1)^{-2}\},
$$
and
$$
	\bH_{-k}=\{(r,\varphi)\in \cM \colon -\pi/2+(k+1)^{-2}<\varphi < -\pi/2+k^{-2}\},
$$
for all $k\geq k_0$ and \beq \label{bbH0}
	\bH_0=\{(r,\varphi)\in \cM \colon -\pi/2+k_0^{-2}<\varphi <
	\pi/2-k_0^{-2}\},
\eeq where $k_0 \geq 1$ is a fixed (and usually large) constant,  whose value will be chosen to guarantee the one-step expansion  -- for details, see the end of the proof of Lemma  5.56 in \cite{CM05} for such a  choice of $k_0$. We now add the boundary of these homogeneous strips in the the singular set $\cS^H_{\pm 1}:=\cS_{\pm 1}\cup \{\partial \bH_k\cup F^{\pm 1}\bH_k, k\geq k_0\}$. We denote the resulting collection of stable/unstable manifolds as $\cW^{s/u}_H$, and call them the homogeneous invariant manifolds.

\begin{lemma}\label{regularity} For the induced map $(F,M)$, the invariant manifolds in $\cW^{s/u}_H$ have the regularity properties: bounded curvature, distortion bounds and absolute continuity.
\end{lemma}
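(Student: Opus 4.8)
The plan is to deduce all three properties from the general regularity theory for dispersing billiards with homogeneity strips (Chapters 4--5 of \cite{CM}), verifying only the inputs specific to our cusp geometry. Since $(F,M)$ is a genuine uniformly hyperbolic dispersing map (Section 4) equipped with the homogeneity layers $\bH_k$, the abstract arguments apply once we control the wave-front curvature $\cB$ along an invariant manifold and its variation from point to point; the delicate issue, unlike in classical dispersing billiards, is that a single step of $F$ comprises an entire corner series of $N$ near-grazing collisions, so every estimate must be tracked collision by collision and then summed.

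First I would establish bounded curvature. For an unstable manifold $W$ the tangent lines lie in $\cC^u$, so the forward wave front is dispersing, i.e. $\cB>0$; iterating (\ref{cBt}) with the bounds (\ref{takck}) gives $\cB\geq 2\cK_{\min}$ and $\cB^-(x_1)=(\tau+\cB^{-1})^{-1}\leq \tau_{\min}^{-1}$, so in the bulk of $M$ the curvature stays in a bounded positive interval and, after differentiating the slope relation (\ref{cVx}), the second derivative $d^2\varphi/dr^2$ of $W$ contracts toward a bounded invariant family. The real work is the passage of $\cB$ through the $N$ intermediate collisions $x_1,\dots,x_{N-1}$ of the corner series, where $\cos\varphi_n\to 0$; here the homogeneity strips keep the otherwise divergent factor $1/\cos\varphi_n$ comparable to a constant within each layer, and the asymptotics of $s_n,\gamma_n,\tau_n$ and $\cK_n$ from Proposition \ref{prop3} let one bound the accrued curvature term by term.

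Next I would prove the distortion bounds. Along a homogeneous unstable curve I would estimate the oscillation of $\log J^u_W F$ between two points $x,y\in W$ by writing the expansion through (\ref{DTvu1})--(\ref{penorm}) as a product over the corner series and bounding the variation of each factor $\log(1+\tau_n\cB_n)$ and of each ratio $\cos\varphi_n/\cos\varphi_{n+1}$. Within a single strip $\bH_k$ the value of $\cos\varphi$ at $x$ and $y$ is comparable, so each contribution is controlled by $C\,\dist(x,y)^{1/3}$, and the per-collision bounds telescope into a uniform H\"older constant for the unstable Jacobian; the stable case is symmetric by the time-reversibility of the dynamics.

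Finally, absolute continuity of the holonomy maps follows from the bounded curvature and distortion estimates together with the growth lemma, exactly as in the corresponding arguments of \cite{CM, CM05}. The main obstacle is making all these bounds uniform across the accumulating families of singular curves $\{s'_n\},\{s''_n\}$ near $x_D$ and through arbitrarily long corner series: because the one-step expansion $\Lambda(x)\sim N^{1+\frac{\beta}{(2\beta-1)(\beta-1)}}$ of Proposition \ref{KBbd} is wildly nonuniform in $N$, one must show that the cumulative distortion over the $N$ collisions of a series still converges. This is precisely where the almost-invariance of $H_N$ in Proposition \ref{prop3}(6) is decisive, since it pins down $s_n,\gamma_n,\tau_n,\cK_n$ collision by collision so that the telescoping sums remain bounded, while the measure estimate $\mu(B_\delta(\cS_{\pm 1}))\leq C\delta^{\frac{\beta(2\beta-1)}{3\beta^2-3\beta+1}}$ of the preceding lemma guarantees that the exceptional set of points without long regular manifolds is negligible.
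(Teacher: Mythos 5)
Your proposal is correct and follows essentially the same route as the paper: the paper's entire proof is a citation to the standard regularity theory of dispersing billiards (Chapter 5 of \cite{CM} and \cite{C99}), on the grounds that the induced map $(F,M)$ is essentially a dispersing billiard map with corner points, which is exactly the reduction you carry out. Your collision-by-collision tracking through the corner series via Proposition \ref{prop3} and the homogeneity strips simply supplies the model-specific verification that the paper leaves implicit, so it is a more detailed instance of the same argument rather than a genuinely different one.
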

The proof of this lemma follows the arguments  in \cite{CM}~Chapter 5 as well as \cite{C99}, as  the induced map $(F,M)$ is essentially the dispersing billiards (with corner points).  Thus we will not repeat here.
\begin{lemma}\label{onestep}(\textbf{One-step expansion estimate}) Assume $\beta\in(2, \infty)$. Let $W$ be a short unstable curve in $M$ and $\{W_i\}$ be the collection of smooth components in $W$. Then \beq
   \liminf_{\delta_0\to 0}\
  \sup_{W\colon |W|<\delta_0}\sum_{i\geq 1}
\frac{1}{\Lambda_i}<1,
	  \label{step1}
\eeq where the supremum is taken over unstable curves $W\subset M$ and $\Lambda_i=\frac{|FW_i|}{|W_i|}$,
$i \geq 1$, denote the minimal local expansion factors of the
connected component $W_i$ under the map $F$.
\end{lemma}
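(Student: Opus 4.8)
The plan is to partition the smooth components $\{W_i\}$ of a short unstable curve $W\subset M$ according to the return time $R$ and to bound the two resulting contributions to $\sum_i \Lambda_i^{-1}$ separately. Recall from (\ref{MphiKo}) that every collision recorded in $M$ is bounded away from grazing, and that near each landing point $x_D$ the singular set of $F$ is formed by the two nested families $\{s'_n\}$, $\{s''_n\}$ accumulating at $x_D$, together with the ordinary singularities of the dispersing map $\cF$ and the homogeneity boundaries $\partial\bH_k$. I call $W_i$ a \emph{cusp component} if it lies in some $M_N$ with $N>K_0$, so that on it $R\equiv N$ and one application of $F$ runs through an entire corner series, and a \emph{regular component} otherwise, so that $F$ acts there as a dispersing billiard map with bounded return time.

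For the cusp components the contribution is harmless and in fact small. An unstable curve is a graph $\varphi=\varphi(r)$ whose slope lies in $\cC^u_x$ and is therefore transverse to each $s'_n$ and $s''_n$ (these have the negative slope $-\cK(x_D)-l_D^{-1}$ near $x_D$); since the strips $M'_N$, $M''_N$ are linearly ordered as they cluster at $x_D$, the curve $W$ meets each of them in at most one arc, so at most two cusp components occur for each value $N$. On every such arc Proposition~\ref{KBbd}, together with the distortion bounds of Lemma~\ref{regularity}, gives the uniform lower bound $\Lambda_i\geq c\,N^{1+\frac{\beta}{(2\beta-1)(\beta-1)}}$; moreover, since $M_N$ has diameter $\sim N^{-\beta/(2\beta-1)}\to 0$ and the $M_N$ all cluster at $x_D$ where $\varphi\approx 0$, these arcs lie in $\bH_0$ and are not further cut by homogeneity boundaries. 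Hence
\[
\sum_{\text{cusp }i}\frac{1}{\Lambda_i}\ \leq\ 2c^{-1}\sum_{N>K_0} N^{-(1+\frac{\beta}{(2\beta-1)(\beta-1)})}\ \leq\ C\,K_0^{-\frac{\beta}{(2\beta-1)(\beta-1)}},
\]
where convergence is guaranteed because $\beta>2$ makes the exponent strictly larger than $1$, and the bound tends to $0$ as $K_0\to\infty$.

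For the regular components the map is the ordinary dispersing dynamics with both the base point and its image inside $M$; this is exactly the situation treated by the one-step expansion estimate for dispersing billiards with homogeneity strips. I would quote the argument of \cite{CM}, Chapter~5, and of \cite{CM05} (the choice of the cut-off $k_0$ at the end of the proof of Lemma~5.56 there), which produces
\[
\liminf_{\delta_0\to 0}\ \sup_{|W|<\delta_0}\ \sum_{\text{regular }i}\frac{1}{\Lambda_i}\ \leq\ \theta_0\ <\ 1
\]
once $k_0$ is taken large enough. Combining the two bounds and then enlarging $K_0$ so that $C\,K_0^{-\beta/((2\beta-1)(\beta-1))}<1-\theta_0$ yields (\ref{step1}); note that increasing $K_0$ only enlarges the regular region and strengthens its expansion, so $\theta_0$ is not spoiled.

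The step I expect to be the real obstacle is not the cusp series, whose convergence is automatic from Proposition~\ref{KBbd}, but the geometric bookkeeping that makes the two families genuinely decouple: verifying that $W$ meets each cusp strip in a single arc (transversality of $\cC^u$ to $\{s'_n\}$, $\{s''_n\}$) and that these strips sit inside $\bH_0$ near $x_D$, so that the homogeneity refinement interacts only with the regular single-reflection components. Once this separation is secured, passing from the pointwise expansion of Proposition~\ref{KBbd} to the length ratio $\Lambda_i=|FW_i|/|W_i|$ through bounded distortion is routine.
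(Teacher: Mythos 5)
Your decomposition into ``cusp'' and ``regular'' components misses the central difficulty of this lemma, and the step where you dismiss that difficulty rests on a false claim. The components $\{W_i\}$ here are the \emph{homogeneous} components: the paper works with the singularity set $\cS^H_{\pm1}$, which for the induced map $F=\cF^R$ forces cutting $W$ by the preimages of homogeneity boundaries under \emph{all intermediate iterates} of $\cF$ along the corner series, not merely at the base point. You argue that since the cells $M_N$ cluster at $x_D$, where $\varphi\approx0$, the arcs $W\cap M_N$ lie in $\bH_0$ and ``are not further cut by homogeneity boundaries.'' This confuses the location of the base points with the location of their intermediate collisions: by Proposition \ref{prop3}, the first collision inside the cusp of a point of $M_N$ has $\gamma_1=\pi/2-|\varphi_1|=\cO(N^{-\beta/(2\beta-1)})$, i.e.\ it is nearly grazing, and $\cF(W\cap M_N')$ reaches up to $\varphi=\pi/2$ and crosses \emph{infinitely many} strips $\bH_m$, $m\geq m_N$; symmetrically the last collision $\cF^N x$ of the corner series crosses infinitely many $\bH_k$, $k\geq k_N$. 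Hence each arc $W\cap M_N$ is cut into countably many components $W_{N,m,k}=W\cap\cF^{-1}\bH_m\cap\cF^{-N}\bH_k$. With only the uniform lower bound $\Lambda_i\geq cN^{1+\beta/((2\beta-1)(\beta-1))}$ that you extract from Proposition \ref{KBbd}, the sum over these infinitely many pieces already diverges for a single $N$. What saves the estimate---and this is the actual content of the paper's proof---is the full asymptotic of Proposition \ref{KBbd}: the factors $\bigl(1+N^{-\beta/(2\beta-1)}/\cos\varphi_1\bigr)$ and $\bigl(1+N^{-\beta(\beta-2)/((2\beta-1)(\beta-1))}/\cos\varphi_N\bigr)$ become $\sim\bigl(1+m^2N^{-\beta/(2\beta-1)}\bigr)$ and $\sim\bigl(1+k^2N^{-\beta(\beta-2)/((2\beta-1)(\beta-1))}\bigr)$ on $W_{N,m,k}$, and this quadratic growth in $m,k$ makes the triple sum $\sum_{N\geq n_0}\sum_{m\geq m_N}\sum_{k\geq k_N}\Lambda^{-1}_{N,m,k}$ converge, with value $\cO\bigl(n_0^{-2\beta/(2\beta-1)}\bigr)$. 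Your proof never invokes these correction factors, so the key mechanism of the lemma is absent.

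There is also a structural problem with your closing step: you make the cusp contribution small by ``enlarging $K_0$.'' But $K_0$ is fixed in the construction of $M$, and the paper imposes an \emph{upper} bound on $K_0$ (every grazing collision must enter a corner series of length $>K_0$); tuning $K_0$ upward at the end of the proof changes $M$, $F$, and the statement itself, and may violate that constraint. The correct source of smallness, which matches the $\liminf_{\delta_0\to0}$ structure of the statement, is that a short curve meeting the accumulating family of cells can only meet cells $M_N$ with $N\geq n_0$, where $|W|\sim n_0^{-\beta^2/((2\beta-1)(\beta-1))}$; the whole sum is then $\cO\bigl(|W|^{2(\beta-1)/\beta}\bigr)$ and tends to $0$ as $|W|\to0$, with $K_0$ untouched.
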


\begin{proof}
Let $W\subset M$ be an unstable curve.  Note that the upper bound of (\ref{step1}) is only achieved when $W$ intersects one of the accumulating sequences of the singular set.

    We consider the worst case by assuming $W$ touches the singular point $x_D$, and intersects $M_N$, for $N\geq n_0$, where $n_0$ depends on the length of $W$. Note that  $W$ crosses  $s_{N+1}$, the boundary of cell $M_N$, for some $N\geq n_0$. Since $s_{N+1}$ consists of points whose last iteration in the corner series is tangential to the boundary of the table, then it must belong to a cell $ \cF^{-N}\bH_k$, for some $k\geq k_N$. Moreover, if $x\in s_{N+1}$ is very close to the curve $s_0$, then its first collision must  cross some region $ \cF^{-1}\bH_m$, for some $m\geq m_N$.
              Define $W_{N,m,k}=W\cap \cF^{-N}\bH_k\cap \cF^{-1}\bH_m$.
               Let $x\in W_{N,m,k}$, and $x_n=\cF^n x$, for $n\geq 1$, then  Proposition \ref{KBbd} implies that  the expansion factor satisfies:
          \begin{align*}
          \Lambda_{N,m,k}:=\frac{|FW_{N,m,k}|}{|W_{N,m,k}|}&\sim N^{1+\frac{\beta}{(2\beta-1)(\beta-1)}}\left(1+\frac{N^\frac{-\beta}{2\beta-1}}{\cos\varphi_1}\right) \cdot \left(1+\frac{N^{-\frac{\beta(\beta-2)}{(2\beta-1)(\beta-1)}} } {\cos\varphi_N}\right)\\
          &\sim N^{1+\frac{\beta}{(2\beta-1)(\beta-1)}}\left(1+m^2N^\frac{-\beta}{2\beta-1}\right) \cdot \left(1+k^2N^{-\frac{\beta(\beta-2)}{(2\beta-1)(\beta-1)}} \right).
          \end{align*}

          Next we will find $m_N$, which is the smallest integer such that $W\cap \cF^{-1}\bH_m$ is not empty. Note that the expansion factor for $\cF W$ is approximately $1$, thus $\cF M_N'$ is a cell  bounded by $\varphi=\pi$, with $r$-dimension $\sim N^{-1}$ and $\varphi$-dimension $\sim N^{-1}$. Thus it intersects infinitely many homogeneous strips $\bH_m$, with $m\geq m_N$. Thus $k_N\sim N$, which implies that $m_N\sim N$. Further images $\cF^i M_N'$ moves away from $\varphi=\pi$, they only intersects homogeneous strips $\bH_m$, with $m<m_N$, for $i=2,\cdots N_1$.
          By the symmetric property of the billiard table, when $i$ approaches  $N$, similar patterns repeat, with $\cF^N M_N'$ intersecting infinitely many $\bH_k$, for $k\geq m_N$.
Thus we have
\begin{align*}
\sum_{N\geq n_0}\sum_{m\geq m_N}\sum_{k\geq k_N}\frac{1}{\Lambda_{N,m,k}}&\leq C \sum_{N\geq n_0}\sum_{m\geq m_N}\sum_{k\geq k_N} \frac{N^{-1-\frac{\beta}{(2\beta-1)(\beta-1)}}}{\left(1+m^2N^\frac{-\beta}{2\beta-1}\right) \cdot \left(1+k^2N^{-\frac{\beta(\beta-2)}{(2\beta-1)(\beta-1)}} \right)}\\
 &\leq C n_0^{-\frac{2\beta}{2\beta-1}}.
\end{align*}

Note that we have assumed $W$ intersects $M_N$, for all $N\geq n_0$, thus by (\ref{hmk}),
$$|W|\sim \sum_{N\geq n_0} N^{-1-\frac{\beta^2}{(2\beta-1)(\beta-1)}} \sim n_0^{-\frac{\beta^2}{(2\beta-1)(\beta-1)}}.$$

 Combining the above estimations, we have
 $$\sum_{N\geq n_0}\sum_{m\geq N}\sum_{k\geq N}\Lambda^{-1}_{N,m,k}\leq C |W|^{\frac{2(\beta-1)}{\beta}}.$$
   Thus by taking $|W|$ small, we can make the above sum $<1$.\end{proof}
Given  an unstable curve $W$, a point $x\in W$ and an integer $n\geq 0$, we denote by $r_{n}(x)$ the distance between $F^n x$ and the
boundary of the homogeneous component of $F^n W$ containing $F^n x$. Clearly $r_n(x)$ is a function on $W$ that characterizes the size of the smooth components of $F^n W$.
We first state  the Growth Lemma, proved in \cite{CZ09}, which is key in the analysis hyperbolic systems with singularities. It expresses the fact that the expansion of unstable curves dominates the cutting by singular curves, in a uniform fashion for all sequences. The reason behind this fact is that unstable curves expand at a uniform exponential rate, whereas the cuts accumulate at only a finite number of singular points. The following Growth Lemma can be derived directly from Lemma \ref{onestep} -- the One-step Expansion Estimates, see \cite{CM}, \cite{CZ09} for details.
\begin{lemma}\label{lmmstep} (Growth Lemma). There exist uniform constants $C_{\bg},c > 0 $ and $\vartheta\in (0,1)$, $q=\frac{\beta(2\beta-1)}{3\beta^2-3\beta+1}$,  such that,
for any probability measure $\nu$ supported on an unstable curves $W$ with positive density $d\nu/dm_W\in \cH_{\gamma}$ for some $\gamma\in (0,1)$, and $n\geq 1$ :
\beq\label{firstgrowth1}
F^n\nu(r_n<\eps)\leq C_{\bg}\,\vartheta^{n}
\nu(r_n<\eps)^q+c\eps,
\eeq
where $m_W$ is the Lebesgure measure on $W$.
\end{lemma}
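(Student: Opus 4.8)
The plan is to obtain (\ref{firstgrowth1}) as the iterated form of the One-step Expansion Estimate (Lemma~\ref{onestep}), following the scheme of \cite{CM} Chapter~5 and \cite{CZ09}. For an unstable curve $W$ and $n\geq 0$, let $\cG_n(W)$ be the (countable) collection of maximal smooth homogeneous components of $F^nW$; thus for $x\in W$ the quantity $r_n(x)$ is the distance from $F^nx$ to $\partial V$, where $V\in\cG_n(W)$ is the component containing $F^nx$. First I would reduce to short curves: by Lemma~\ref{onestep} there exist $\delta_0>0$ and $\vartheta_1\in(0,1)$ with $\sup_{|W|<\delta_0}\sum_i\Lambda_i^{-1}<\vartheta_1$, where $\Lambda_i$ is the minimal expansion factor of the parent arc $W_i=W\cap F^{-1}V_i$ over a component $V_i\in\cG_1(W)$. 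Any longer curve is first partitioned into pieces of length in $[\delta_0/2,\delta_0]$; this only refines $\cG_n$ and enlarges the left-hand side by a bounded factor absorbed into $C_{\bg}$.

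The base step is a one-step inequality for $F_{*}\nu(r_1<\eps)$. A point $y=Fx$ with $r_1(y)<\eps$ belongs to one of two families. Either $y$ lies within $\eps$ of a boundary of $V_i\in\cG_1(W)$ coming from an endpoint of the parent arc $W_i$; pulling back through $F$ and invoking the bounded distortion and absolute continuity of Lemma~\ref{regularity}, the $\nu$-mass of the corresponding $x$ is at most $\sum_i\Lambda_i^{-1}$ times the mass of the $(\eps\Lambda_i^{-1})$-collar of $\partial W_i$, hence at most $\vartheta_1\,\nu(r_0<\eps)$ up to a fixed distortion constant. Or $y$ lies within $\eps$ of a freshly created boundary, i.e.\ a piece of $\cS_{1}$, of $F^{-1}\partial M$, or of a homogeneity boundary $\partial\bH_k$ cutting $FW$; by transversality of these curves to the unstable cone their $\eps$-collars meet $FW$ in a set of $F_{*}\nu$-mass $\leq c\eps$. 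Combining the two families gives $F_{*}\nu(r_1<\eps)\leq\vartheta_1\,\nu(r_0<\eps)+c\eps$.

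Iterating this inequality along $W,FW,\dots,F^{n-1}W$ and summing the geometric series in $\vartheta_1$ produces a bound of the form $C\vartheta_1^{\,n}(\cdots)+c'\eps$, giving the exponential prefactor $\vartheta^{n}$ of (\ref{firstgrowth1}) with $\vartheta=\vartheta_1$. The exponent $q=\frac{\beta(2\beta-1)}{3\beta^{2}-3\beta+1}$ enters exactly where an iterate $F^{k}W$ meets the accumulating sequences $\{s'_m\},\{s''_m\}$ clustering at $x_D$: there the number of fresh cuts is unbounded, and to keep the floor term under control I would bound the mass these cuts can trap not by a naive count but by the measure estimate $\mu(B_{\delta}(\cS_{\pm1}))\leq C\delta^{q}$ established above. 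Substituting that estimate for the collar mass near the accumulation point is what produces the power $\nu(r_n<\eps)^{q}$ on the right of (\ref{firstgrowth1}).

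The main obstacle is precisely this accumulation. Unlike uniformly hyperbolic dispersing billiards, here the local complexity is infinite, since infinitely many singular curves $s'_m,s''_m$ and homogeneity boundaries $\partial\bH_k$ pass arbitrarily close to $x_D$, so the classical bounded-complexity derivation of the Growth Lemma does not apply verbatim. The saving feature is that Lemma~\ref{onestep} already supplies a \emph{convergent} sum $\sum_i\Lambda_i^{-1}<1$ over these infinitely many components, while the $\delta^{q}$ bound limits the mass they trap; the remaining work is to verify that the distortion and absolute-continuity constants of Lemma~\ref{regularity} are \emph{uniform} across all homogeneity strips, so that the one-step constants $\vartheta_1$ and $c$ may be chosen independently of which strips are crossed. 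Once this uniformity is in hand, the iteration and summation proceed exactly as in \cite{CZ09}.
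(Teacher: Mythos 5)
Your overall route is the one the paper itself points to: the paper gives no proof of Lemma \ref{lmmstep}, saying only that it ``can be derived directly from Lemma \ref{onestep}'' and citing \cite{CM}, \cite{CZ09}, and your first three paragraphs are a reasonable sketch of that standard derivation (subdivision into curves of length $<\delta_0$, a one-step dichotomy between inherited and freshly created boundaries, iteration using the uniform distortion and absolute continuity of Lemma \ref{regularity}). One caveat on that part: the $c\eps$ bound for the fresh cuts is not a consequence of transversality alone. Infinitely many singularity curves $s_m'$, $s_m''$ and homogeneity boundaries $\partial\bH_k$ cut $FW$, so the sum of the $F_{*}\nu$-masses of their $\eps$-collars diverges unless each collar is weighted by the reciprocal expansion $\Lambda_i^{-1}$ of the component carrying it; it is precisely the convergent sum $\sum_i\Lambda_i^{-1}<1$ of Lemma \ref{onestep} that makes this term $\leq c\eps$. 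You state this correctly in your closing paragraph, so this is a misattribution in the middle of the argument rather than a gap.

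The genuine gap is your mechanism for the exponent $q$. You propose to substitute the bound $\mu(B_{\delta}(\cS_{\pm 1}))\leq C\delta^{q}$ of (\ref{mu0m}) for ``the collar mass near the accumulation point'' and claim this produces the factor $\nu(r_n<\eps)^{q}$. That step cannot work: (\ref{mu0m}) controls the two-dimensional invariant measure $\mu$ of a neighborhood in $M$, whereas in the growth lemma you must control the measure $\nu$, which is carried by a single unstable curve and is mutually singular with respect to $\mu$; no inequality transfers the former into the latter (and in the paper (\ref{mu0m}) is itself deduced from the cell structure via Lemma \ref{muC} and (\ref{hmk}), i.e.\ such neighborhood bounds are outputs of curve-wise expansion estimates, not inputs to them). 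Fortunately, no such mechanism is needed, because the power $q$ in (\ref{firstgrowth1}) is essentially vacuous. Write $X_n=\nu\{x\in W: r_n(x)<\eps\}\leq 1$, noting that both occurrences of the measure in (\ref{firstgrowth1}) denote this same quantity. The crude conclusion of your iteration, $X_n\leq C_1\vartheta_1^{n}+c_1\eps$, already implies (\ref{firstgrowth1}) for \emph{every} $q\in(0,1)$: either $X_n\leq 2c_1\eps$, and the claim holds with $c=2c_1$; or $X_n>2c_1\eps$, in which case $X_n\leq 2C_1\vartheta_1^{n}$, hence $X_n=X_n^{1-q}X_n^{q}\leq (2C_1)^{1-q}\vartheta_1^{(1-q)n}X_n^{q}$, which is (\ref{firstgrowth1}) with $C_{\bg}=(2C_1)^{1-q}$ and $\vartheta=\vartheta_1^{1-q}$. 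So you should delete the appeal to (\ref{mu0m}) and insert this two-line algebraic step; the specific value $q=\frac{\beta(2\beta-1)}{3\beta^2-3\beta+1}$ plays no role in proving the lemma itself and matters only for how the lemma is invoked later, in Lemma \ref{barMm}.
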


In \cite{C99,CZ,CZ09}, the following lemma was proved.
\begin{lemma}\label{lem:1}
If the induced billiard map $F$ satisfies (\ref{step1}), and the unstable manifolds have regularities: bounded curvature, distortion bounds and absolute continuity, then there is a hyperbolic horseshoe $\Delta_0\subset M$ such
that \beq\label{Yexp1}
	\mu_M\bigl(x\in M:\ R(x;F,\Delta_0)>m\bigr)\leq
	\,C\theta^m\quad\quad \forall m\in \mathbb N,
\eeq for some $\theta<1$, where $R(x; F, \Delta_0)$ is the return
time of $x$ to $\Delta_0$ under the map $F$. Thus
 the map $F:M\to M$ enjoys exponential decay of correlations. More precisely,
for every pair of dynamically H{\"o}lder continuous functions
 $f,g\in \cH_{\gamma}$ and $n \geq
 1$,
\beq  \label{expbound} 	
\cC_n(f,g,F,\mu_M)\leq C \|f\|_{C^{\gamma}}\|g\|_{C^{\gamma}} \theta^{n},
\eeq where  $C>0$ is a uniform constant.
\end{lemma}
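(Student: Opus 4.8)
The plan is to realize $(F,M,\mu_M)$ as the base dynamics of a Young tower \cite{Y98} and then invoke Young's theorem on exponential decay of correlations. The three hypotheses we are handed --- the one-step expansion (\ref{step1}), uniform hyperbolicity of the induced map, and the regularity of the homogeneous invariant manifolds from Lemma \ref{regularity} --- are precisely the qualitative inputs needed to verify Young's axioms; the only quantitative statement that requires real work is the exponential tail bound (\ref{Yexp1}) on the first-return time to a reference horseshoe.

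First I would fix a ``magnet'': choose a density point $x_0\in M$ carrying a local unstable manifold $W^u_0$ and a local stable manifold $W^s_0$ of definite size, and let $\Delta_0$ be the measurable rectangle swept out by a positive-measure Cantor family of unstable manifolds crossing a positive-measure Cantor family of stable manifolds through a neighborhood of $x_0$. The absolute continuity of the stable/unstable holonomies (Lemma \ref{regularity}) guarantees $\mu_M(\Delta_0)>0$ and endows $\Delta_0$ with the product structure demanded of a Young tower base. I then define the return time $R(\,\cdot\,;F,\Delta_0)$ as the first moment a point returns to $\Delta_0$ in a way that crosses it completely in the unstable direction.

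The heart of the argument is the exponential tail estimate. By the Growth Lemma \ref{lmmstep}, any unstable curve $W$ satisfies $F^n\nu(r_n<\eps)\leq C_{\bg}\vartheta^{n}\nu(r_n<\eps)^q+c\eps$, which says that unstable curves recover long smooth pieces at a uniform exponential rate, the expansion dominating the cutting by the finitely accumulated singular curves. Once a piece of $F^n W$ is long, uniform hyperbolicity together with the distortion bounds of Lemma \ref{regularity} gives it a probability bounded below of crossing $\Delta_0$ entirely at its next opportunity. Iterating this dichotomy --- short pieces are exponentially rare, while long pieces succeed with fixed probability --- yields $\mu_M(x\in M:\ R(x;F,\Delta_0)>m)\leq C\theta^m$ for some $\theta<1$, which is exactly (\ref{Yexp1}).

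With the product structure, bounded distortion, absolute continuity of the holonomy, and the exponential tail all in hand, the axioms of \cite{Y98} are verified, and Young's theorem delivers the exponential decay of correlations (\ref{expbound}) for dynamically H\"older observables $f,g\in\cH_{\gamma}$. The main obstacle is the tail estimate of the preceding paragraph: turning the heuristic ``short pieces are exponentially rare, long pieces return with fixed probability'' into a rigorous bound uniform over all unstable curves and all iterates is precisely where the Growth Lemma --- and behind it the one-step expansion (\ref{step1}) verified in Lemma \ref{onestep} for the carefully chosen induced map --- is indispensable. Since this programme is carried out in full in \cite{C99,CZ,CZ09}, I would quote those references for the construction rather than reproduce it here.
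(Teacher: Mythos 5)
Your proposal is correct and follows essentially the same route as the paper: the paper does not prove this lemma at all but simply cites \cite{C99,CZ,CZ09}, and your sketch (Young-tower base with product structure, exponential tail via the Growth Lemma, then Young's theorem) is a faithful outline of exactly the programme carried out in those references, to which you also ultimately defer. The one-step expansion of Lemma \ref{onestep} and the regularity properties of Lemma \ref{regularity} are, as you say, precisely the model-specific inputs that make the cited machinery apply.
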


The above results can be extended to  variables made at
multiple times. Let $f_0, f_1, \ldots, f_k\in
\cH_{\gamma}$, and
 $\|f_i\|_{\infty}=\|f\|_{\infty}, $ $i=1, \ldots, k$. Consider
 the product
$\tf=f_0\cdot (f_1\circ F)\cdots (f_k\circ F^k).$
Furthermore, let $g_0, g_1, ..., g_k\in \cH_{\gamma}$,
and $\|g_i\|_{\infty}=\|g\|_{\infty}$, $i=1, ..., k$. Consider
the product $\tg=g_0\cdot (g_1\circ F)\cdots (g_k\circ F^k).$
Then we can estimate the correlations between observables $\tf$
and $\tg$.
\begin{theorem} There exists $C>0$, such that for all $n\geq
0$,
\begin{equation*}
\cC_n(\tf,\tg,F,\mu_M)\leq C \|\tf\|_{C^{\gamma}}\|\tg\|_{C^{\gamma}}
\theta^n,
\end{equation*}
where $\theta$ is the same as in (\ref{expbound}).
\end{theorem}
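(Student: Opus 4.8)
The plan is to reduce the multiple-time estimate to the single-observable exponential bound (\ref{expbound}) already proved in Lemma~\ref{lem:1}. The key observation is that, although $\tf$ and $\tg$ are products of observables composed with iterates of $F$, each is itself a dynamically H\"older observable in $\cH_{\gamma}$ with finite $C^{\gamma}$-norm. Granting this, one applies (\ref{expbound}) to the single pair $(\tf,\tg)$ to obtain
$$\cC_n(\tf,\tg,F,\mu_M)\leq C\,\|\tf\|_{C^{\gamma}}\|\tg\|_{C^{\gamma}}\,\theta^n$$
with exactly the same $\theta$, which is the assertion. Thus no new dynamical input beyond Lemma~\ref{lem:1} is required; the whole content is the verification that the products lie in $\cH_{\gamma}$.

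The first step is to show that for a single $f\in\cH_{\gamma}$ and each fixed $i$, the composition $f\circ F^i$ again belongs to $\cH_{\gamma}$. I would use the dynamical (separation-time) description of the class: for $x,y$ on a common unstable manifold the forward separation time shifts as $s_+(F^i x,F^i y)=s_+(x,y)-i$, so $|f(F^i x)-f(F^i y)|\leq \|f\|_{C^{\gamma}}\,\vartheta^{\,s_+(x,y)-i}$, which keeps $f\circ F^i$ in the class at the cost of a factor $\vartheta^{-i}$ in its norm; along stable manifolds the backward separation time increases by $i$, so the constant only improves. Equivalently, in the static formulation (\ref{DHC-}) one checks that on each connected component of $\cM\setminus S_{n_1+i,n_2+i}$ the differential of $F^i$ is bounded, so $f\circ F^i$ is H\"older there. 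Either way $f\circ F^i\in\cH_{\gamma}$ with $\|f\circ F^i\|_{C^{\gamma}}\leq C\vartheta^{-i}\|f\|_{C^{\gamma}}$, finite for each fixed $i$.

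Next I would invoke the Leibniz-type bound $|(uv)(x)-(uv)(y)|\leq \|u\|_\infty|v(x)-v(y)|+\|v\|_\infty|u(x)-u(y)|$ to see that a finite product of bounded dynamically H\"older functions is again dynamically H\"older. Applying this to $\tf=f_0\cdot(f_1\circ F)\cdots(f_k\circ F^k)$ gives $\tf\in\cH_{\gamma}$ with $\|\tf\|_{C^{\gamma}}\leq C(k)\prod_{i=0}^k\|f_i\|_{C^{\gamma}}<\infty$, and likewise for $\tg$; since $k$ is fixed all the constants remain finite. Substituting $\tf,\tg$ into (\ref{expbound}) then closes the proof.

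The main obstacle is the verification that $f\circ F^i$ stays in $\cH_{\gamma}$: because $F$ expands along unstable directions, composition degrades regularity there, and one must track the separation-time shift (or the component-wise bound on $DF^i$) carefully to confirm that the $C^{\gamma}$-norm stays finite for each fixed $i$. Everything else is a direct reduction to the already-established single-observable estimate, so the theorem follows without revisiting the coupling/horseshoe construction of Lemma~\ref{lem:1}.
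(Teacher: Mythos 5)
Your reduction is essentially the intended one. The paper in fact gives \emph{no} proof of this theorem --- it is stated bare, immediately after Lemma \ref{lem:1}, with the justification delegated to the cited literature \cite{CZ,CZ09} --- and the standard argument there is exactly what you propose: verify that $\tf$ and $\tg$ are themselves dynamically H\"older and feed the single pair $(\tf,\tg)$ into (\ref{expbound}). Your separation-time version of this verification is correct and is the right way to run it: for $x,y$ on one unstable manifold with $s_+(x,y)>i$ the points $F^ix,F^iy$ stay on one smooth unstable piece with $s_+(F^ix,F^iy)=s_+(x,y)-i$ (and when $s_+(x,y)\leq i$ one simply uses $2\|f\|_\infty\leq 2\|f\|_\infty\vartheta^{s_+(x,y)-i}$), so $f_i\circ F^i$ remains in the class \emph{with the same base} $\vartheta$, only its constant inflating by $\vartheta^{-i}$; along stable manifolds $s_-$ increases by $i$ and the constant improves; the Leibniz inequality handles the finite product. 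Since $k$ is fixed, the inflated constants are finite and are absorbed precisely into $\|\tf\|_{C^{\gamma}}\|\tg\|_{C^{\gamma}}$, which is where the stated bound puts them, and the rate $\theta$ is untouched.

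The one genuine flaw is your claimed ``equivalent'' static verification: it is \emph{not} true that the differential of $F^i$ is bounded on the connected components of $\cM\setminus S_{n_1+i,n_2+i}$. For a billiard map $\|D_xF\|\sim 1/\cos\varphi_1$ blows up as the image collision becomes grazing, i.e.\ as $x$ approaches $F^{-1}\{\varphi=\pm\pi/2\}$, and these curves bound the components in question; this unbounded expansion is exactly why the homogeneity strips $\bH_k$ are introduced in Section 6. The most one can salvage is that $F$ extends H\"older-continuously with exponent $1/2$ to the closure of each component, which would only place $f\circ F^i$ in $\cH(\gamma/2^i)$; since the rate in (\ref{expbound}) depends on the H\"older exponent, that route cannot yield ``the same $\theta$'' asserted in the statement. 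So delete the static alternative (the phrase ``either way'' overstates it) and rest the proof entirely on the separation-time formulation, which is also the formulation under which Lemma \ref{lem:1} is actually invoked.
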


Hence we conclude that for $\beta\in(2, \infty)$, the return
map $F\colon M\to M$ has exponential mixing rates by the above Theorem, thus both condition (\textbf{F1}) and (\textbf{F2}) are verified.
The following lemma was proved in \cite{CZ}.
\begin{lemma} \label{LmMain2}
For systems under assumptions ({\textbf{F1-F2}}), and for the billiard map $\cF:\cM\to\cM$ and any piecewise
H\"older continuous functions $f,g\in \cH(\gamma)$ on $\cM$,  the correlations
(\ref{Cn})  decay as
\beq \label{main2}
   |\cC_n(f,g,\cF,\mu)|\leq\,C\|f\|_{C^{\gamma}}\|g\|_{C^{\gamma}}n^{-a}(\ln n)^{1+a},\eeq
   for some constant $C>0$.
\end{lemma}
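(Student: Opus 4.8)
The plan is to prove this exactly as in the coupling scheme of \cite{CZ}, so that once the hypotheses have been arranged, the statement is an application of their general theorem; I sketch the scheme itself. The overall idea is to lift $(\cM,\cF,\mu)$ to a Young tower built over the induced system and to run a coupling argument in which the polynomial tail \textbf{(F2)} plays the role that an exponential tail plays in the classical (uniformly hyperbolic) case.

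First I would use \textbf{(F1)}, in the form of Lemma \ref{lem:1}, to produce the hyperbolic horseshoe $\Delta_0\subset M$ whose return time $R(\cdot;F,\Delta_0)$ under the induced map $F$ has exponential tails. Quotienting $\Delta_0$ along its stable manifolds yields a Gibbs--Markov expanding base $\bar\Delta_0$, over which one builds a Markov tower $(\hat\Delta,\hat F,\hat\mu)$ whose factor map is $(\cM,\cF,\mu)$. The crucial point is the tail of the tower height, i.e.\ of the first $\cF$-return time $\hat R$ to $\Delta_0$ measured in the original time. Since each $F$-step from $\Delta_0$ back to $\Delta_0$ costs $R$ steps of $\cF$, one has $\hat R=\sum_{j=0}^{m-1}R(F^{j}x)$ where $m=R(x;F,\Delta_0)$ has an exponential tail while each $R$ obeys $\mu_M(R>k)\sim k^{-(1+a)}$ by \textbf{(F2)}, with $a=\tfrac{1}{\beta-1}$. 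A single large excursion then dominates the sum, so I would verify $\hat\mu(\hat R>n)\sim n^{-(1+a)}$; the tower inherits precisely the polynomial order of the return-time tail.

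Next I would lift $f,g\in\cH(\gamma)$ to the tower; their dynamical Hölder property together with the bounded curvature, distortion and absolute-continuity bounds of Lemma \ref{regularity} guarantees the needed regularity of the lifts along unstable leaves. Then apply the coupling lemma for towers: standard pairs on two copies of $\hat\Delta$ are matched whenever they simultaneously make a full return to the base, at which instant their conditional unstable densities agree up to a geometrically small error controlled by the Growth Lemma \ref{lmmstep}. The coupling time $\Upsilon$ then satisfies $\hat\mu(\Upsilon>n)\le C\sum_{m>n}\hat\mu(\hat R>m)\sim n^{-a}$. I would decompose $\cC_n(f,g,\cF,\mu)$ by splitting at an intermediate time $n-p$: the already-coupled part is bounded by $C\|f\|_{C^{\gamma}}\|g\|_{C^{\gamma}}\,\theta^{p}$, while the not-yet-coupled part is bounded by $C\|f\|_{\infty}\|g\|_{\infty}\,(n-p)^{-a}$. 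Choosing the truncation window $p\sim\frac{1+a}{|\ln\theta|}\ln n$ balances the exponentially small coupled contribution against the polynomial tail, and tracking this balance through the renewal estimate produces the stated bound $C\,n^{-a}(\ln n)^{1+a}$.

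The main obstacle is the coupling lemma with explicit polynomial tail control, that is, the estimate $\hat\mu(\Upsilon>n)\lesssim\sum_{m>n}\hat\mu(\hat R>m)$ together with the requirement that the matching of unstable densities on returning standard pairs be exponentially accurate \emph{uniformly} over the infinitely many singular cells $M_n$ accumulating at $x_D$. This uniformity is exactly what the Growth Lemma \ref{lmmstep} and the regularity of the homogeneous manifolds $\cW^{s/u}_H$ from Lemma \ref{regularity} are designed to supply; everything else (tower construction, the tail computation for $\hat R$, and the final logarithmic balancing) is bookkeeping built on the hyperbolicity and regularity already established in Sections \ref{Sec:3} onward. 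Since the resulting statement is precisely the general theorem of \cite{CZ}, I would in the end invoke that argument, having checked that its hypotheses --- namely \textbf{(F1)}, \textbf{(F2)}, and the regularity of $\cW^{s/u}_H$ --- hold in the present setting.
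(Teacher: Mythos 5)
Your bottom line---verify \textbf{(F1)}--\textbf{(F2)} and invoke the general theorem of \cite{CZ}---coincides with what the paper itself does: the paper gives no proof of Lemma \ref{LmMain2} at all, it simply states that the lemma ``was proved in \cite{CZ}.'' So at the citation level your proposal matches the paper. However, your reconstruction of the internal argument of \cite{CZ} contains a genuine error, and it is worth naming because it misplaces the origin of the factor $(\ln n)^{1+a}$.

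The problem is your claim that ``a single large excursion dominates the sum,'' so that $\hat\mu(\hat R>n)\sim n^{-(1+a)}$ with no logarithmic correction, and hence that the coupling time satisfies $\hat\mu(\Upsilon>n)\sim n^{-a}$. If these clean tails were really available from \textbf{(F1)}--\textbf{(F2)} alone, then your own final decomposition (coupled part $\le C\theta^{p}$, uncoupled part $\le C(n-p)^{-a}$, with $p\sim\ln n$) would yield $\cC_n=\cO(n^{-a})$ with no logarithm at all, i.e.\ the strictly stronger bound of Theorem \ref{TmMain}; balancing $\theta^{p}$ against $(n-p)^{-a}$ can never manufacture a $(\ln n)^{1+a}$, so your sketch is internally inconsistent when it asserts that ``tracking this balance'' produces the stated bound. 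In fact the single-big-jump asymptotic is not obtainable here by heuristics: the excursion lengths $R(F^{j}x)$ are strongly dependent, and removing the logarithm is exactly the content of \cite{CZ3} and of Section 7 of this paper (Proposition \ref{Cor4}, Lemmas \ref{lem3} and \ref{barMm}, which control transitions between the cells $M_n$). What \textbf{(F1)}--\textbf{(F2)} actually give, and what \cite{CZ} proves, is the log-corrected tail $\mu\bigl(x:\ R(x;\cF,\Delta_0)>n\bigr)\le C\,n^{-a}(\ln n)^{1+a}$: one splits off the event of more than $b\ln n$ returns to $M$ within $n$ steps (exponentially small by (\ref{Yexp1})), and on the complement the longest excursion has length $\gtrsim n/\ln n$, so a union bound over the $\sim n$ possible temporal positions of that excursion, combined with $\mu_M(R>k)\sim k^{-(1+a)}$, gives $n\cdot(\ln n/n)^{1+a}=n^{-a}(\ln n)^{1+a}$. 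The logarithm is born in this return-time tail estimate, not in the coupling/balancing step. To repair your write-up, weaken your tail claim for $\hat R$ to the log-corrected one and then apply Young's theorem (or the coupling argument) to that tail; proving the clean tail you asserted would require the Section 7 machinery, not the argument you outlined.
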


 Now we see that  Theorem~\ref{TmMain}  should
follow from (\textbf{F1}) - (\textbf{F2}) for $a=\frac{1}{\beta-1}$, except for the extra logarithmic factor.
To improve the upper bound for the decay rates, one needs to analyze the statistical properties of the return time function.  In \cite{CZ3}, the upper bound for decay rates of correlations was improved by dropping the logarithmic factor.

\section{Proof of the main Theorems}

   A general strategy for estimating the correlation function $\cC_m(f, g,\cF,\mu)$ for
systems with weak hyperbolicity was developed in \cite{CZ, CZ3}.

 Note that Proposition \ref{prop3} also leads to the the following fact about transitions between cells with different indices.
\begin{proposition}\label{Cor4} There exist positive constants $c_1< c_2$, such that for any $n\geq 1$, if
  $ M_{m}\cap F	 M_{n}\neq\emptyset$  then
\beq\label{tran}c_1\sqrt[\beta] {n^{\beta-1}}\leq m\leq c_2
\sqrt[\beta-1] {n^{\beta}}.\eeq
Moreover, for any $m\in [c_1n^{\frac{\beta-1}{\beta}}, c_2n^{\frac{\beta}{\beta-1}}]$, the transition probability satisfies
$$\mu(M_m| F(M_n))\sim m^{-1-\frac{\beta^2}{(\beta-1)(2\beta-1)}} n^{\frac{\beta}{2\beta-1}}.$$
\end{proposition}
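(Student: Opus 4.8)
The plan is to evaluate $\mu(M_m\mid FM_n)=\mu(M_m\cap FM_n)/\mu(FM_n)$ and reduce it to the local geometry of the cells near the landing point $x_D$. Since $F\colon M\to M$ preserves $\mu_M$ and is essentially invertible, $\mu(FM_n)=\mu(M_n)\sim n^{-2-\frac1{\beta-1}}=n^{-\frac{2\beta-1}{\beta-1}}$ by Lemma \ref{muC}. Hence the proposition splits into two claims: (i) $M_m\cap FM_n\neq\emptyset$ precisely when (\ref{tran}) holds, and (ii) $\mu(M_m\cap FM_n)\sim w_mw_n$ on that range, where $w_k\sim k^{-1-\frac{\beta}{2\beta-1}-\frac{\beta}{(2\beta-1)(\beta-1)}}$ is the width of $M_k$ from Lemma \ref{muC}. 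Indeed $w_mw_n/\mu(M_n)$ reproduces exactly the stated exponent $m^{-1-\frac{\beta^2}{(\beta-1)(2\beta-1)}}n^{\frac{\beta}{2\beta-1}}$, and the symmetry of $w_mw_n$ in $(m,n)$ is consistent with $\mu(M_m\cap FM_n)=\mu(M_n\cap F^{-1}M_m)$.

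For (ii), I would set up the picture at $x_D=(r_D,0)$, where $M_m$ and $FM_n$ are thin transversal strips. By Lemma \ref{muC}, $M_k$ is a strip of length $\ell_k\sim k^{-\frac{\beta}{2\beta-1}}$ elongated along the grazing curve $s_0$ and of width $w_k$ transverse to it, sitting at transverse offset $\delta_k\sim\sum_{j\ge k}w_j\sim k^{-\frac{\beta^2}{(2\beta-1)(\beta-1)}}$ from $s_0$; and $FM_n$, being the reflection of $M_n$ across $\varphi=0$, is a strip of the same dimensions elongated along the reflected curve, at offset $\delta_n$. Since $s_0$ has slope $-\cK(x_D)-l_D^{-1}$ and its reflection the opposite slope, the two families are uniformly transversal near $x_D$ and the density $\cos\varphi\sim1$ there. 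Therefore, whenever the two strips overlap, their intersection is, up to bounded distortion, a parallelogram with sides $w_m,w_n$ and opening angle bounded away from $0$, giving $\mu(M_m\cap FM_n)\sim w_mw_n$.

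It then remains to decide when the strips overlap, which is claim (i). Their elongation axes cross transversally near $x_D$, but each strip is pushed off its axis by $\delta_m$, resp.\ $\delta_n$, so the crossing point of the two axes lies at distance $\sim\max(\delta_m,\delta_n)$ from $x_D$; being on both axes, it lies inside both strips iff $\max(\delta_m,\delta_n)\lesssim\min(\ell_m,\ell_n)$. The self-conditions $\delta_k\lesssim\ell_k$ hold automatically (they amount to $\frac{\beta}{\beta-1}\ge1$), while the cross-conditions $\delta_n\lesssim\ell_m$ and $\delta_m\lesssim\ell_n$ read, after inserting the exponents, $m\lesssim n^{\frac{\beta}{\beta-1}}$ and $m\gtrsim n^{\frac{\beta-1}{\beta}}$ — exactly (\ref{tran}). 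As an independent check, summing (ii) with $\sum_{m\ge m_-}w_m\sim m_-w_{m_-}$ returns $\sum_m\mu(M_m\cap FM_n)\sim\mu(M_n)$ only when $m_-\sim n^{\frac{\beta-1}{\beta}}$, and the $m\leftrightarrow n$ symmetry then forces $m_+\sim n^{\frac{\beta}{\beta-1}}$. Dividing by $\mu(FM_n)$ gives the transition probability.

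The main obstacle is making step (ii) uniform across the whole range (\ref{tran}): I must show that the two strips genuinely cross — each long enough to span the other's width and positioned on the correct side of $s_0$ so the crossing point lands strictly inside both — so the intersection has full size $\sim w_mw_n$ rather than a lower-order sliver. This rests on controlling, via Proposition \ref{prop3}, the precise position and orientation of $FM_n$ relative to the entering cells $M_m$ through the single reflection off $\Gamma_3$ that links exit to re-entry, together with a distortion bound for this return map near the degenerate point $x_D$, where the two grazing curves meet and both corner-series invariants vanish simultaneously. The endpoints $m\sim m_\pm$, where the strips only barely cross and the parallelogram is truncated, are where the transversality and positioning estimates are most delicate and where the constants $c_1,c_2$ are ultimately fixed.
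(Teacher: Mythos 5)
Your proposal is correct and follows essentially the same route as the paper's proof: both rest on the cell dimensions from Lemma \ref{muC}, the mirror symmetry of $FM_n$ across $\varphi=0$, transversality of the two strip families near $x_D$, and the approximation of $FM_n\cap M_m$ by a rectangle of side lengths the two widths $w_m,w_n$ with density $\cO(1)$, divided by $\mu(M_n)$. The only cosmetic difference is that the paper obtains the admissible range (\ref{tran}) dynamically, by writing the length of an expanded unstable curve, $|FW|\sim n^{-\beta/(2\beta-1)}$, as the sum of the widths of the cells it crosses (with time-reversibility for the upper end) --- which is exactly your ``consistency check'' --- while your offset-versus-length criterion $\delta_m\lesssim\ell_n$, $\delta_n\lesssim\ell_m$ is an equivalent static reformulation of the same scaling relation, and like the paper you correctly flag the endpoint degeneracy as the place where the constants $c_1,c_2$ are fixed.
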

\begin{proof}  Without loss of generality  we only consider the singular curves near $x_D$. In particular note that the curves in $\cS_1$ and $\cS_{-1}$ are symmetric about $r_D$ in the vicinity of $x_D$.
Let $W\subset M_n$ be a  short unstable curve  that stretches completely in $M_n$ between two long sides. Then $|FW|\sim n^{-\frac{\beta}{2\beta-1}}$. On the other hand, note that by (\ref{hmk}), the width of cell $M_n$ is of order $\sim n^{-1-\frac{\beta^2}{(2\beta-1)(\beta-1)}}$. We assume $n_1, n_2$ are the two extreme indices, such that $$n_1=\min\{ m\geq 1\,:\, FM_n\cap M_m \neq \emptyset\},\,\,\,\,\,\,\,\,n_2=\max\{ m\geq 1\,:\, FM_n\cap M_m \neq \emptyset\}.$$
Then
\begin{align*}
|FW|& = \sum_{m\geq n_1} |FW\cap M_m| \\
&\sim \sum_{m\geq n_1} m^{-1-\frac{\beta^2}{(2\beta-1)(\beta-1)}} =n_1^{-\frac{\beta^2}{(2\beta-1)(\beta-1)}}.
\end{align*}
Now using the fact that $|FW|\sim n^{-\frac{\beta}{2\beta-1}}$, we can solve for $n_1 \sim n^{\frac{\beta-1}{\beta}}$.

By  time - reversibility, one can verify that  for the largest index $n_2$, if $F x\in M_{n_2}$, then $n$ is the minimal index, such that  $ F^{-1} M_{n_2} \cap M_n$ is not empty. Thus  above estimation implies $n\sim n_2^{\frac{\beta-1}{\beta}}$, which is equivalent to $n_2\sim n^{\frac{\beta}{\beta-1}}$.

Next we calculate the transition probability from $M_n$ to $M_m$, for $m\in [n_1, n]$. Note that $FM_n\cap M_m$ can be approximated by a rectangle  with dimensions given by the width of $FM_n$ and $M_m$. More precisely, $FM_n\cap M_m$ can be approximated as a rectangle with ``width'' (its
$r$-dimension) $\sim n^{-1-\frac{\beta^2}{(2\beta-1)(\beta-1)}}$, ``height'' (the
$\varphi$-dimension) $\sim m^{-1-\frac{\beta^2}{(2\beta-1)(\beta-1)}}$ and density weight $\cO(1)$.  This implies that
$$\mu(M_m|FM_n)\sim \frac{  n^{-1-\frac{\beta^2}{(2\beta-1)(\beta-1)}} m^{-1-\frac{\beta^2}{(2\beta-1)(\beta-1)}}}{\mu(M_n)} \sim m^{-1-\frac{\beta^2}{(2\beta-1)(\beta-1)}} n^{\frac{\beta}{2\beta-1}}.$$
\end{proof}

 Although it follows from the above lemma that some points in $M_n$ are mapped to cells with	higher indices, one can show that	 most of the points in $M_n$ indeed have images which belong to cells with much smaller indices.
\begin{lemma}\label{lem3} For any small $e>0$, we define $D_n(e)=\bigcup_{m\geq n^{\frac{\beta-1}{\beta}+e}} M_m$.
Then for any $n$ sufficiently large,
$$\mu(D_n(e)|FM_n)\sim n^{-\frac{e\beta^2}{(2\beta-1)(\beta-1)}} \mu(M_n).$$
In addition, let $\beta_0=\frac{3+\sqrt{5}}{2}$, then $\mathbb{E}(R\circ |M_n)(x)=e_n\bI_{M_n}(x)$, for some  $e_n>0$, with $e_n\leq C n^{1-\gamma}$, where $C>0$ is a constant and $\gamma= \frac{1}{(\beta-1)^2}$ for $\beta> \beta_0$ and $\gamma= \frac{1}{\beta}$ for $\beta\in (2, \beta_0]$, And $\gamma=\frac{1}{\beta}+\eps_0$ for $\beta=\beta_0$, where $\eps_0\in (0,(\frac{1}{\beta-1}-\frac{1}{\beta})/100)$.\end{lemma}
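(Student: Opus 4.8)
The lemma has two parts, and I would handle them separately. The first assertion—that $\mu(D_n(e)\,|\,FM_n)\sim n^{-\frac{e\beta^2}{(2\beta-1)(\beta-1)}}\mu(M_n)$—follows almost directly from the transition probability formula in Proposition~\ref{Cor4}. The plan is to write $\mu(D_n(e)\,|\,FM_n)=\sum_{m\geq n^{(\beta-1)/\beta+e}}\mu(M_m\,|\,FM_n)$, insert the asymptotic $\mu(M_m\,|\,FM_n)\sim m^{-1-\frac{\beta^2}{(\beta-1)(2\beta-1)}}\,n^{\frac{\beta}{2\beta-1}}$, and sum the resulting power series in $m$. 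Since the exponent on $m$ is strictly less than $-1$, the geometric-like tail sum is dominated by its first term, giving $\sum_m m^{-1-\frac{\beta^2}{(\beta-1)(2\beta-1)}}\sim \bigl(n^{(\beta-1)/\beta+e}\bigr)^{-\frac{\beta^2}{(\beta-1)(2\beta-1)}}$; multiplying by $n^{\frac{\beta}{2\beta-1}}$ and simplifying the exponents should collapse to the stated $n^{-\frac{e\beta^2}{(2\beta-1)(\beta-1)}}$ (after accounting for the $\mu(M_n)$ normalization). This step is essentially bookkeeping with the exponents already established.

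The second assertion—the bound $e_n=\mathbb{E}(R\circ F\,|\,M_n)\leq Cn^{1-\gamma}$—is the substantive one. Here I would compute the conditional expectation by decomposing over the target cells: $e_n\sim\sum_{m}m\cdot\mu(M_m\,|\,FM_n)$, where $m$ plays the role of the return time to the next corner series (since $R\sim m$ on $M_m$). Using Proposition~\ref{Cor4}, the summation range is $m\in[c_1 n^{(\beta-1)/\beta},\,c_2 n^{\beta/(\beta-1)}]$, and I would split the sum according to whether the dominant contribution comes from the lower endpoint $n^{(\beta-1)/\beta}$ (most points fall into much smaller cells) or the upper endpoint $n^{\beta/(\beta-1)}$ (the rare transitions to higher cells). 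The integrand is $m\cdot m^{-1-\frac{\beta^2}{(\beta-1)(2\beta-1)}}\,n^{\frac{\beta}{2\beta-1}}=m^{-\frac{\beta^2}{(\beta-1)(2\beta-1)}}\,n^{\frac{\beta}{2\beta-1}}$, so the exponent on $m$ determines whether the sum is front-loaded or tail-loaded. Comparing $\frac{\beta^2}{(\beta-1)(2\beta-1)}$ against $1$ is exactly where the threshold $\beta_0=\frac{3+\sqrt5}{2}$ enters: this critical value solves $\beta^2=(\beta-1)(2\beta-1)$, i.e.\ $\beta^2-3\beta+1=0$.

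The main obstacle, and the reason the final exponent $\gamma$ is case-dependent, is precisely this competition between the two endpoints of the transition range. For $\beta>\beta_0$ the exponent $\frac{\beta^2}{(\beta-1)(2\beta-1)}<1$, so the sum is dominated by its upper endpoint $m\sim n^{\beta/(\beta-1)}$ and one extracts the bound with $\gamma=\frac{1}{(\beta-1)^2}$; for $\beta\in(2,\beta_0)$ the exponent exceeds $1$, the sum is dominated by the lower endpoint $m\sim n^{(\beta-1)/\beta}$, and the bound improves to $\gamma=\frac1\beta$; at the boundary $\beta=\beta_0$ the sum is borderline and picks up a logarithmic factor, which I would absorb by sacrificing an arbitrarily small $\eps_0$, yielding $\gamma=\frac1\beta+\eps_0$. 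I expect the delicate part to be verifying the endpoint asymptotics rigorously (confirming that the cutoffs from Proposition~\ref{Cor4} are the operative ones and that no intermediate scale contributes a larger term), and checking that the resulting exponents simplify correctly to the claimed values of $\gamma$ in each regime.
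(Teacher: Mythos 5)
Your proposal is correct and follows essentially the same route as the paper: both decompose $\mu(D_n(e)\,|\,FM_n)$ and $\mathbb{E}(R\circ F\,|\,M_n)$ over target cells via the transition probabilities of Proposition \ref{Cor4}, and both identify $\beta_0=\frac{3+\sqrt5}{2}$ as the root of $\beta^2=(2\beta-1)(\beta-1)$, which decides whether the sum $\sum_m m^{-\frac{\beta^2}{(2\beta-1)(\beta-1)}}$ is dominated by the lower endpoint $m\sim n^{(\beta-1)/\beta}$ (giving $\gamma=\tfrac1\beta$) or the upper endpoint $m\sim n^{\beta/(\beta-1)}$ (giving $\gamma=\tfrac{1}{(\beta-1)^2}$), with a logarithmic correction at $\beta=\beta_0$ absorbed into an $\eps_0$-loss. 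Your endpoint analysis and exponent arithmetic are consistent with the paper's (terser) computation, so no gap to report.
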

\begin{proof}
For any $n$ large, we denote $\beta_n:=\{m\in [c_1 n^{\frac{\beta-1}{\beta}}, c_2n^{\frac{\beta}{\beta-1}}]\}$ as the index set of $m$, such that $M_m\cap F M_n$ is not empty.
 Combining with	 Proposition \ref{Cor4}, we know for any $m\in \beta_n$, $$\mu(M_m|F M_n)\sim m^{-1-\frac{\beta^2}{(2\beta-1)(\beta-1)}} n^{\frac{\beta}{2\beta-1}}.$$

Thus the conditional expectation of $R\circ F$ on $M_n$ satisfies
\begin{align*}
\mathbb{E}(R\circ F| M_n)&\sim\sum_{m\in\beta_n} m \cdot \mu(M_m|F M_n)\cdot \bI_{M_n}
\sim \sum_{m\in \beta_n} m \cdot m^{-1-\frac{\beta^2}{(2\beta-1)(\beta-1)}} \cdot n^{\frac{\beta}{2\beta-1}}\cdot \bI_{M_n}.\end{align*}
We let $\beta_0=\frac{3+\sqrt{5}}{2}$. Note that for $\beta=\beta_0$, the $r$-dimension of the cell $M_n$ is $\sim n^{-1-\frac{\beta^2}{(2\beta-1)(\beta-1)}}= n^{-2}$. Thus for $\beta\in (2,\beta_0)$,
 $$\mathbb{E}(R\circ F| M_n)\sim R^{1-\frac{1}{\beta}}\bI_{M_n};$$ while for $\beta=\beta_0$, $$\mathbb{E}(R\circ F| M_n)\sim R^{1-\frac{1}{\beta}}\ln R\cdot \bI_{M_n}=R^{1-\frac{1}{(\beta-1)^2}}\ln R\cdot \bI_{M_n};$$ and for $\beta>\beta_0$, $$\mathbb{E}(R\circ F| M_n)\sim R^{1-\frac{1}{(\beta-1)^2}}\cdot \bI_{M_n}.
 $$ We take $\gamma=\frac{1}{(\beta-1)^2}$  for $\beta> \beta_0$; $\gamma=\frac{1}{\beta}+\eps_0$ for $\beta=\beta_0$; and $\gamma=\frac{1}{\beta}$ for $\beta<\beta_0$. Here $\eps_0\in(0,(\frac{1}{\beta-1}-\frac{1}{\beta})/100)$.
  Thus we have shown that $\mathbb{E}(R\circ F| M_n)=e_n\bI_{M_n}$, with $e_n=\cO(n^{1-\gamma})$, and $e_n>0$.

For any small $e>0$, we define $D_n(e)=\cup_{m\geq n^{\frac{\beta-1}{\beta}+e}} M_m$. Then we have
 $$\mu(D_n(e)| FM_n)=\sum_{k\geq n^{\frac{\beta-1}{\beta}+e}} \mu( M_k|F M_n) \sim \sum_{k\geq n^{\frac{\beta-1}{\beta}+e}} k^{-1-\frac{\beta^2}{(2\beta-1)(\beta-1)}} \cdot  n^{\frac{\beta}{2\beta-1}}=n^{-\frac{e\beta^2}{(2\beta-1)(\beta-1)}}.$$

\end{proof}
This lemma also implies, essentially, that a typical trajectory stays away from	those long corner series  most of the time. But as $\beta$ goes to $\infty$, more points in $F M_n$	 tend to enter long corner series more frequently.

\begin{lemma}\label{barMm}
For sufficently large $m$, any $b\geq 1$, there exists
$E_m\subset M_m$, with $\mu(M_m\setminus E_m)\leq
m^{-\frac{\beta^2}{2(\beta-1)(3\beta^2-3\beta+1)}}\mu(M_m)$, and any $x\in E_m$, $Fx, F^2x, ..., F^{b\ln
m} x$ all belong to cells with index less than $m^{1-\frac{1}{2\beta}}$.
\end{lemma}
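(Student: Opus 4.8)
The plan is to fix the threshold $T:=m^{1-\frac{1}{2\beta}}$ and to show that, apart from an exceptional set of the claimed relative measure, the index of the cell containing $F^i x$ stays below $T$ for all $1\le i\le b\ln m$. Accordingly I set $B_i:=\{x\in M_m: F^i x\in \bigcup_{k\ge T}M_k\}$ and take $E_m:=M_m\setminus\bigcup_{i=1}^{b\ln m}B_i$; the whole task reduces to bounding $\mu\big(\bigcup_i B_i\mid M_m\big)$, where $\mu(\,\cdot\mid M_m)$ denotes $\mu$ normalized on $M_m$.

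The single ingredient I need is a tail version of the transition estimate of Proposition \ref{Cor4}. Summing $\mu(M_k\mid FM_n)\sim k^{-1-\frac{\beta^2}{(\beta-1)(2\beta-1)}}n^{\frac{\beta}{2\beta-1}}$ over $k\ge s$ (the exponent exceeds $1$, so the tail is governed by its lower endpoint) gives
\[
\mu\Big(\textstyle\bigcup_{k\ge s}M_k \ \Big|\ FM_n\Big)\ \le\ C\, n^{\frac{\beta}{2\beta-1}}\, s^{-\frac{\beta^2}{(\beta-1)(2\beta-1)}} .
\]
Taking $s=T$ and $n=m$ controls the first step, $\mu(B_1\mid M_m)\le C m^{-\frac{\beta}{2(2\beta-1)(\beta-1)}}$. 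Taking $s=T$ and any $n<T$ controls every later step uniformly: since $n^{\frac{\beta}{2\beta-1}}\le T^{\frac{\beta}{2\beta-1}}$, the right-hand side collapses to $C\,T^{-\frac{\beta}{(\beta-1)(2\beta-1)}}=C\,m^{-\frac{1}{2(\beta-1)}}$, independently of $n$.

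Then I run a union bound with a Markov-type conditioning, decomposing $\bigcup_i B_i$ into the disjoint first-entry events $B_i\cap\bigcap_{j<i}B_j^c$. Conditioning on the first $i-1$ iterates already lying in cells of index $<T$, the last of them sits in some $M_n$ with $n<T$, so by the uniform bound above the conditional probability that $F^i x$ escapes to $\bigcup_{k\ge T}M_k$ is at most $C m^{-\frac{1}{2(\beta-1)}}$. Summing over $1\le i\le b\ln m$ yields
\[
\mu\Big(\textstyle\bigcup_i B_i\ \Big|\ M_m\Big)\ \le\ C\,m^{-\frac{\beta}{2(2\beta-1)(\beta-1)}}+C\,b(\ln m)\,m^{-\frac{1}{2(\beta-1)}} ,
\]
in which the first (smaller-exponent) term dominates. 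Both exponents exceed the target $\frac{\beta^2}{2(\beta-1)(3\beta^2-3\beta+1)}$: the comparison $\frac{\beta}{2(2\beta-1)(\beta-1)}\ge \frac{\beta^2}{2(\beta-1)(3\beta^2-3\beta+1)}$ reduces to $3\beta^2-3\beta+1\ge\beta(2\beta-1)$, i.e. $(\beta-1)^2\ge0$, and $\frac{1}{2(\beta-1)}$ is larger still. Hence for $m$ large (depending on $b$) the logarithm is absorbed and the total is $\le m^{-\frac{\beta^2}{2(\beta-1)(3\beta^2-3\beta+1)}}$, which is exactly $\mu(M_m\setminus E_m)/\mu(M_m)$.

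The main obstacle is making the conditioning of the third paragraph rigorous: Proposition \ref{Cor4} supplies only the cell-averaged weights $\mu(M_k\mid FM_n)$, whereas the union bound needs the next-step escape probability to be $\le C m^{-1/(2(\beta-1))}$ uniformly over the position of the point inside the arriving cell, and conditioned on the whole past. This is precisely where the regularity package of Lemma \ref{regularity} (bounded distortion and absolute continuity of the unstable foliation, together with the exponential relaxation furnished by the Growth Lemma \ref{lmmstep}) enters: it guarantees that the distribution of $F^{i-1}x$, for $x$ drawn from $\mu$ on $M_m$ along good trajectories, has density on each $M_n$ comparable to the normalized restriction of $\mu$, so the averaged tail estimate may legitimately be iterated. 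Once this uniformity is secured, the remainder is the elementary exponent bookkeeping recorded above.
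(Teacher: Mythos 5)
Your one-step estimates are correct and agree with the paper: summing the transition weights of Proposition \ref{Cor4} over the tail gives exactly $\mu(B_1\mid M_m)\le C\,m^{-\frac{\beta}{2(2\beta-1)(\beta-1)}}$, which is the same bound the paper extracts from Lemma \ref{lem3} with $e=\tfrac{1}{2\beta}$. The genuine gap is your third paragraph, and the repair you sketch in the last paragraph does not close it. The cells $M_n$ are not a Markov partition for $F$: conditioned on an itinerary $M_m\to M_{n_1}\to\cdots\to M_{n_{i-1}}$, the law of $F^{i-1}x$ inside $M_{n_{i-1}}$ is carried by $F^{i-1}M_m\cap M_{n_{i-1}}$, a union of exponentially thin strips stretched along the unstable direction; it is \emph{not} absolutely continuous with bounded density with respect to $\mu$ normalized on $M_{n_{i-1}}$ (this already fails for $i=2$). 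Bounded distortion and absolute continuity (Lemma \ref{regularity}) are statements about individual unstable leaves and the holonomy between them; they do not yield the two-dimensional ``density comparability'' you assert, so the cell-averaged tail estimate of Proposition \ref{Cor4} cannot legitimately be iterated through the conditioning.

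The paper's proof avoids conditioning on the past altogether: it foliates $M_m$ by unstable curves, disintegrates $\nu_m=\mu(\cdot\mid M_m)$ along the leaves, and applies the Growth Lemma \ref{lmmstep} separately to each time $1\le l\le b\ln m$, obtaining
$F^l\nu_m(\cD_m)\le C\vartheta^l\bigl(F\nu_m(\cD_m)\bigr)^q+c\,m^{-\frac{3\beta^2-3\beta+1}{2\beta(\beta-1)}}$
with $q=\frac{\beta(2\beta-1)}{3\beta^2-3\beta+1}$, and then sums the geometric series in $l$. Note where the exponent in the statement of Lemma \ref{barMm} comes from: it is the one-step exponent degraded by the power $q<1$, namely $\frac{\beta}{2(2\beta-1)(\beta-1)}\cdot q=\frac{\beta^2}{2(\beta-1)(3\beta^2-3\beta+1)}$. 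The fact that your computation produces a strictly \emph{better} exponent than the one claimed in the lemma is itself a warning sign: that apparent gain is an artifact of the unjustified Markov-type conditioning, and it disappears once the iteration over $l$ is carried out rigorously via the Growth Lemma, which is exactly the step your argument is missing.
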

\begin{proof} For  any $e>0$ small, it follows from  Lemma \ref{lem3},
$$
\mu\left(D_m(e)|F M_m\right)=\sum_{n=m^{\frac{\beta-1}{\beta}+e}}^{\infty}
\mu\left(M_n|F M_m\right)=\cO(m^{-\frac{e\beta^2}{(2\beta-1)(\beta-1)}}).$$
Below we choose $e=\frac{1}{2\beta}$, and denote $\cD_m:=D_m(\frac{1}{2\beta})$. Thus we can neglect points  $x\in M_m$ such that $F(x)\in M_m$ with
$n>m^{1-\eps}=m^{1-\frac{1}{2\beta}}$, for $\eps=\frac{1}{\beta}-e=\frac{1}{2\beta}$.
It remains to estimate the probability that points $y \in M_m$
 will come
up to $M_i$, for $i\geq m^{1-\frac{1}{2\beta}}$, within $\cO(\ln m)$ iterations of
$F$.

 Note that each cell $M_m$ has dimension $\sim m^{-\frac{\beta}{2\beta-1}}$ in the stable direction, dimension $\sim m^{-1-\frac{\beta^2}{(2\beta-1)(\beta-1)}}$ in the unstable direction, and measure $\mu(M_m)\sim m^{-2-\frac{1}{\beta-1}}$.  We first foliate $M_m$ with unstable curves $W_{{\alpha}}\subset M_m$ (where $\alpha$ runs through an index set $\cA$). These curves have length $|W_{\alpha}|\sim m^{-1-\frac{\beta^2}{(2\beta-1)(\beta-1)}}$. Let $\nu_m:=\frac{1}{\mu(M_m)}\mu|_{M_m}$ be the conditional measure of $\mu$ restricted on $M_m$. Let $\cW_m=\cup_{\alpha\in \cA} W_{\alpha}$ be the collection of all unstable curves, which foliate the cell $M_m$. Then we can disintegrate the measure $\nu_m$ along the leaves $W_{\alpha}$. More precisely,
for any measurable set $A\subset M_m$,
$$\nu_m(A)=\int_{\cA}\nu_{\alpha}(W_{\alpha}\cap A) \, d\lambda(\alpha),$$
where  $\lambda$ is the probability factor measure on $\cA$.  For each unstable curve $W_{\alpha}\in\cW$, if $F^m W_{\alpha}$  crosses $\cD_m$, then $F^l W_{\alpha}$ is cut into pieces by the boundary of cells in $\cD_m$. Moreover, the largest length of these pieces is $\sim m^{-(1+\frac{\beta^2}{(2\beta-1)(\beta-1)})(1-\frac{1}{2\beta})}= m^{-\frac{3\beta^2-3\beta+1}{2\beta(\beta-1)}}$.  According to the growth lemma \ref{lmmstep}, there exists $\vartheta\in(0,1)$, such that we have
\beq\label{growthest}F^l\nu_m(\cD_m)\leq C_{\bg}\,\vartheta^{l}
F^l\nu_m(\cD_m)^{\frac{\beta(2\beta-1)}{3\beta^2-3\beta+1}}+c m^{-\frac{3\beta^2-3\beta+1}{2\beta(\beta-1)}}.\eeq

Moreover Lemma \ref{lem3} implies that
$$F\nu_m(\cD_m):=\mu(R(F(x))>m^{1-\frac{1}{2\beta}} |R(x)=m)\leq C m^{-\frac{\beta}{2(2\beta-1)(\beta-1)}},$$
for some uniform constant $C>0$.

Now we apply (\ref{growthest}) to get for any $l=1,\cdots, b\ln m$,
\begin{align*}\nu_m(R(F^l(x))&>m^{1-\frac{1}{2\beta}} )=F^l\nu_m(\cD_m)\\
&\leq C_{\bg}\vartheta^l  F\nu_m(\cD_m)^{\frac{\beta(2\beta-1)}{3\beta^2-3\beta+1}}+cm^{-\frac{3\beta^2-3\beta+1}{2\beta(\beta-1)}}\\
&\leq C\vartheta^l m^{-\frac{\beta^2}{2(\beta-1)(3\beta^2-3\beta+1)}}+c m^{-\frac{3\beta^2-3\beta+1}{2\beta(\beta-1)}}.\end{align*}

Thus we have
$$\sum_{l=1}^{b\ln m}\nu_m(R(F^l(x))>m^{1-\frac{1}{2\beta}} )\leq C_1 m^{-\frac{\beta^2}{2(\beta-1)(3\beta^2-3\beta+1)}}.$$
This also implies that  for any large $m$, there exists
$E_m\subset (R=m)$, with $$\mu((R=m)\setminus E_m)\leq
m^{-\frac{\beta^2}{2(\beta-1)(3\beta^2-3\beta+1)}}\mu(R=m)$$ and any $x\in E_m$, $Fx, F^2x, ..., F^{b\ln
m} x$ all belong to cells with index less than $m^{1-\frac{1}{2\beta}}$.
\end{proof}

Now we are ready to prove Theorem 1.

		The tower in
$M$ can be easily and naturally extended to $\cM$, thus we get a
the Young's tower with the same base $\Delta_0 \subset M$; and
a.e.\ point $x\in \cM$ again properly returns to $\Delta_0$ under
$\cF$ infinitely many times. Consider the return times to $M$
under $\cF$ for $x\in \cM$. According to Lemma \ref{muC},
   \beq
	\mu(x\in \cM:\ R(x)>n)\sim
	\, \frac{1}{n^{\frac{1}{\beta-1}}},\quad\quad \forall n\geq 1.
	   \label{Mpol1}
\eeq

 For every $m\geq 1$ and $x\in\cM$ denote
$$
  r(x;m,M)=\#\{1\leq i\leq m:\ \cF^i(x)\in M\}.
$$
Let\begin{align*}
	A_m &=\{x\in \cM\colon\ R(x;\cF,\Delta_0)>m\},\\
	B_{m,b} &=\{x\in \cM\colon\ r(x;m,M) > b\ln m\},
\end{align*}
where $b>0$ is a constant to be chosen shortly.

By (\ref{Yexp1}), we know that
	$$
	\mu(A_m \cap B_{m,b})\leq\,C\cdot m\,\theta^{b\ln m}.
$$
Choosing $b=-\frac{2}{\ln\theta}$, then \beq\label{bt}{\rm const}\cdot
m\,\theta^{b\ln m}\leq {\rm const}\cdot m\,
\theta^{-\frac{2}{\ln\theta}\ln m}={\rm const}\cdot m^{-1}.\eeq

The set $A_m \setminus B_{m,b}$ consists of points $x\in \cM$
whose images under $m$ iterations of the map $\cF$ return to $M$
at most $b \ln m$ times but never return to the `base' $\Delta_0$
of Young's tower. Our goal is to show that $\mu(A_m \setminus
B_{m,b}) = \cO(m^{\frac{1}{1-\beta}})$.

Let $I=[n_0, n_1]$ be the longest interval, within $[1, m]$,
between successive returns to $M$. Without loss of generality, we
assume that $m-n_1 \geq n_0$, i.e.\ the leftover interval to the
right of $I$ is at least as long as the one to the left of it
(because the time reversibility of the billiard dynamics allows us
to turn  time backwards).
	  Due to Lemma \ref{barMm}, for a large portion of typical points $y \in M_{|I|}$ we have $F^t (y) \in M_{m_t}$,
where $m_t$ decreases exponentially fast. So there exists $c>0$,
such that $$ m/2\leq |I|+b|I|^{1-\frac{1}{2\beta}}\ln |I|\leq c|I|,$$ which gives
$|I|\geq \tfrac{m}{2c}$.

Let $G_m=\{x\in A_m\setminus B_{m,b}\,:\, |I|\geq
\tfrac{m}{2c}\}$. Thus it is enough to estimate the size of $G_m$.
Since for any $x\in G_m$, one of its forward images belongs to
$m_{|I|}$ with $ |I|\geq \tfrac{m}{2c}$. Applying the bound
Lemma \ref{muC} to the interval $I$ gives
\beq\label{ABslack}
	 \mu(G_m)
	 \leq C\,m\cdot\, m\,\cdot\, m^{-2-\frac{1}{\beta-1}}=\,C m^{\frac{1}{1-\beta}},
\eeq
 (the extra factors of $m$ must be included because the
interval $I$ may appear anywhere within the longer interval
$[1,m]$, and the measure $\mu$ is invariant).

 In terms of Young's tower $\Delta$, we
obtain \beq
	\mu(x\in\Delta:\ R(x;\cF,\Delta_0)>m)\leq
	\,C  m^{\frac{1}{1-\beta}},\quad\quad \forall m\geq 1.
	   \label{Ypol3}
\eeq  This completes the proof of the theorem \ref{TmMain}.

\medskip
\medskip

\medskip
\medskip

\section{General models with cusps}
In this section, we extend the above results to more general billiard models with cusps at flat points. We mainly consider dispersing billiards with one cusp at a flat point.

The first model is a Lorentz gas with finite horizon, see Figure ??  Let $\mathbb{T}^2$ be a unit torus, and $\bB_i$, $i=1,\cdots, l_0$ be a finite number of convex scatterers in $\mathbb{T}^2$, for some $l_0>2$. We assume:\\
\\
(1) There exists $K_{\min}>0$, such that for any point $p\in \partial\bB_i$, the curvature $\cK(p)\geq K_{\min}$, for $i=3,\cdots, l_0$. \\
(2) $\bB_1$ and $\bB_2$ are tangent at a unique flat point $P$, and the cusp at $P$ satisfies assumption (\textbf{h1})-(\textbf{h2}). Moreover, we assume the tangent line of $\partial\bB_1$ is perpendicular to the scatter $\bB_3$.\\

We define the billiard table as $Q_2=\mathbb{T}^2\setminus (\cup_i \bB_i)$. Let $\cF_2$ be the billiard map on the collision space $\cM_2$ associated with $Q_2$. We  define the induced space $M_2$ as in (\ref{defM}), the return time function $R_2:M_2\to \mathbb{N}$, and the induced map $F_2: M_2\to M_2$, such that for any $x\in M_2$, $F_2 x=\cF_2^{R_2(x)} x$.

By taking $K_0$ large, we can check that $\{R_2=n, n>K_0\}=\{M_n, n>K_0\}$ are essentially identical for both systems $(\cF,\cM)$ and $(\cF_2,\cM_2)$. The only differences are those level sets with smaller indices, which play very minor  roles in the study of decay rates of correlations.  Thus one can obtain the same decay rates of correlations.

The second model is the dispersing billiard with corners and a cusp.
Let  $\Gamma_i$, $i=1,\cdots, l_0$ be a finite number of convex curves, for some $l_0>2$. We assume:\\
\\
(1) There exists $K_{\min}>0$, such that for any point $p\in \Gamma_i$, the curvature $\cK(p)\geq K_{\min}$, for $i=3,\cdots, l_0$. \\
(2) $\Gamma_1$ and $\Gamma_2$ are tangent at a unique flat point $P$, and the cusp at $P$ satisfies assumption (\textbf{h1})-(\textbf{h2}). Moreover, we assume the tangent line of $\Gamma_1$ is perpendicular to the boundary $\Gamma_3$.\\

We define the billiard table as $Q_3$, such that $\partial Q_3=\cup_i \Gamma_i$. Let $\cF_3$ be the billiard map on the collision space $\cM_3$ associated with $Q_3$. We  define the induced space $M_3$ as in (\ref{defM}), the return time function $R_3: M_3\to \mathbb{N}$, and the induced map $F_3: M_3\to M_3$, such that for any $x\in M_3$, $F_3 x=\cF_3^{R_3(x)} x$.

Again by taking $K_0$ large, we can check that $\{R_2=n, n>K_0\}=\{R_3>K_0\}$ are indeed identical for both systems $(\cF_2,\cM_2)$ and $(\cF_3,\cM_3)$. The only differences are those level sets with smaller indices, which play very minor  roles in the study of decay rates of correlations.  Thus one can again obtain the same decay rates of correlations.
\section{Appendix: Proof of Proposition \ref{KBbd}.}
Since  the Euclidean norm and the $p$-norm are uniformly equivalent at
the points $x\in M_N$ and $F x$, we can use the $p$-norm in our estimations.
Assume $W\subset M_N$ is an unstable curve.  Then the expansion factor in the p- metric along $x\in W$ satisfies
\beq\label{verticalLa}
  \Lambda(x) = \prod_{n=0}^{N} \bigl(1 + \tau(x_n)
  \cB(x_n)\bigr).\eeq
Let $x_n=\cF^n x=(r_n,\varphi_n)$, for $n=1,\cdots, N$, and $\cK_n=\cK(x_n)$, $\tau_n=\tau(x_n)$.
  Note that  $\cB (x_n)$ satisfies the recursive formula \beq
\label{cBXm}
   \cB(x_{n}) = \frac{2\cK_n}{\cos\varphi_{n}}
   +\frac{1}{\tau_{n-1} + 1/\cB(x_{n-1})}.
\eeq We denote $\lambda_n=\tau_n\cB(x_n)$, and use notation $\sin\gamma_n=\cos\varphi_n$, thus
\beq\label{cBXmtau}
  \lambda_n = \frac{2\tau_n\cK(s_{n})}{\sin\gamma_n}
   +\frac{\tau_n}{\tau_{n-1}}\cdot \frac{1}{1 + 1/\lambda_{n-1}},
\eeq
and
$$\Lambda(x)=\Pi_{n=0}^{N} (1+\lambda_n).$$
By Proposition \ref{prop3} and (\ref{cBXmtau}), to estimate the minimal expansion factor $\Lambda(x)$, we first will  assume \beq\label{4.8}\gamma_1\sim \gamma_N\sim N^{-2/(2+\bar a)}=N^{-\frac{\beta}{2\beta-1}}.\eeq

\begin{proposition} For any $x\in M_N$ satisfying (\ref{4.8}), we have\\
(1) $\lambda_n\sim 1/n$, for $1\leq n\leq N_1$;\\
(2) $\lambda_n\sim 1/N\sim 1/n$, for $N_1< n< N_3$;\\
(3) $\lambda_n\sim 1/(N-n)$, for $N_3<n<N$.
\end{proposition}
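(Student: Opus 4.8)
The plan is to read off a closed scalar recursion for $\lambda_n=\tau_n\cB(x_n)$ from (\ref{cBXmtau}), namely
\[
\lambda_n=\kappa_n+\rho_n\,\frac{\lambda_{n-1}}{1+\lambda_{n-1}},\qquad \kappa_n:=\frac{2\tau_n\cK_n}{\sin\gamma_n},\quad \rho_n:=\frac{\tau_n}{\tau_{n-1}},
\]
and to feed in the two asymptotics already established in Proposition~\ref{prop3}: by (\ref{bartaun}), $\kappa_n=\tfrac{2\bar a}{(2+\bar a)^2}\,n^{-2}+\cO(\alpha_n^3)$, and by (\ref{ratiotau}), $\rho_n=1-\tfrac{2\beta}{(2\beta-1)n}+\cO(\alpha_n^2)+\cO(n^{-2})$. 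Writing $c=\tfrac{2\beta}{2\beta-1}$ and $b=\tfrac{2\bar a}{(2+\bar a)^2}=\tfrac{\beta(\beta-1)}{(2\beta-1)^2}$, substituting the ansatz $\lambda_n\approx a/n$ and using $\tfrac{\lambda_{n-1}}{1+\lambda_{n-1}}=\lambda_{n-1}-\lambda_{n-1}^2+\cdots$ forces the constant $a$ to satisfy $a^2+(c-1)a-b=0$. The pleasant point is that the discriminant collapses, $(c-1)^2+4b=\tfrac{1+4\beta^2-4\beta}{(2\beta-1)^2}=1$, so the relevant root is $a=\tfrac{1-(c-1)}{2}=\tfrac{\beta-1}{2\beta-1}>0$. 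This is precisely the target profile $\lambda_n\sim a/n$ of item~(1), and it is attracting since linearization gives decay rate $\sim n^{-(2a+c)}=n^{-2}$.

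To make this rigorous on $1\le n\le N_1$ I would run a supersolution/subsolution induction, exploiting that $\lambda_{n-1}\mapsto \kappa_n+\rho_n\tfrac{\lambda_{n-1}}{1+\lambda_{n-1}}$ is monotone increasing. Fix $A>a>B>0$. Assuming $\lambda_{n-1}\le A/(n-1)$, monotonicity gives $\lambda_n\le \kappa_n+\rho_n\tfrac{A}{n-1+A}$, and expanding in powers of $1/n$ yields $\lambda_n\le A/n-\bigl(A^2+(c-1)A-b\bigr)n^{-2}+(\text{errors})$; since $A>a$ the bracket is strictly positive, producing a genuine gain of order $n^{-2}$, so $\lambda_n\le A/n$ closes the step, and the symmetric computation with $B<a$ gives $\lambda_n\ge B/n$. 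The base case is $\lambda_1\sim1$, guaranteed by the normalization (\ref{4.8}): from the recursion $\lambda_1\ge\kappa_1\sim1$, while $\lambda_1\le\kappa_1+\rho_1$ with $\rho_1=\tau_1/\tau_0\le \tau_1/\tau_{\min}$ small. The main obstacle is to show that the accumulated error terms never overwhelm the $n^{-2}$ gain across the whole range $n\le N_1\sim N$. For this I would use $\alpha_n\sim\alpha_1\,n^{-(\beta-1)/(2\beta-1)}$ with $\alpha_1\sim N^{-\beta/(2\beta-1)}$ (Proposition~\ref{prop3}) to verify that both $\alpha_n^3$ and $\alpha_n^2/n$ are $\cO(N^{-1})\cdot n^{-2}$, hence uniformly $o(n^{-2})$; being dominated by the strict gain at every step, the bounds $B/n\le\lambda_n\le A/n$ then propagate for all $n\in[1,N_1]$, giving $\lambda_n\sim1/n$.

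For the turning period $N_1<n<N_3$ the angles are of order one, so $\sin\gamma_n\sim1$, $\rho_n=1+\cO(1/N)$, and a direct estimate (or (\ref{bartaun}) with $n\sim N$) gives $\kappa_n\sim N^{-2}\sim n^{-2}$. Entering this block with $\lambda_{N_1}\sim1/N$ from item~(1), I would argue by an invariant-interval estimate: one checks that $g_n(\lambda)=\kappa_n+\rho_n\tfrac{\lambda}{1+\lambda}$ maps $[c_1/N,\,c_2/N]$ into itself for $c_1$ small and $c_2$ large, the quadratic term $-\lambda^2$ pushing the top of the interval down while the fresh term $\kappa_n\sim N^{-2}$ holds the bottom up. Hence $\lambda_n\in[c_1/N,c_2/N]$ throughout, i.e. $\lambda_n\sim1/N\sim1/n$, which is item~(2).

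Item~(3) I would deduce from the reflection symmetry of the cusp together with time reversibility. Applying item~(1) to the reversed corner series (which by the remark after Proposition~\ref{prop3} carries the same geometry under $n\leftrightarrow N-n$) shows that the \emph{backward} weighted front satisfies $\tau_{n-1}\cB^-(x_n)\sim 1/(N-n)$ for $N_3<n<N$. Since $\cB(x_n)=\cB^-(x_n)+\tfrac{2\cK_n}{\cos\varphi_n}$ by (\ref{cBt}), we have $\lambda_n=\tau_n\cB(x_n)=\rho_n\,\tau_{n-1}\cB^-(x_n)+\kappa_n$, where $\rho_n=1+\cO(1/(N-n))$ and $\kappa_n\sim(N-n)^{-2}$ is subdominant to $(N-n)^{-1}$; therefore $\lambda_n\sim1/(N-n)$. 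The only delicate bookkeeping here is to relate the forward front at $x_n$ to the backward front of the reversed trajectory, but this is exactly the fresh-term decomposition (\ref{cBt}), so no new estimate is required beyond item~(1).
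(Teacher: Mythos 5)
Your items (1) and (2) are correct and follow essentially the paper's own route: the paper's Lemma \ref{LmB} runs the same induction on the recursion (\ref{cBXmtau}), with the same fixed-point equation (your $a^2+(c-1)a=b$ is the paper's $A^2+A(B-1)=D$) and the same root $\tfrac{\beta-1}{2\beta-1}$, and your error control (both $\alpha_n^3$ and $\alpha_n^2/n$ are $\cO(N^{-1})n^{-2}$ uniformly on $[1,N_1]$) checks out. In fact your two-sided sub/supersolution version is slightly stronger than what the paper writes down, since Lemma \ref{LmB} only establishes the lower bound $\lambda_m\geq A[m+C_3\ln m+C_4]^{-1}$ even though the proposition asserts a two-sided $\sim$; your invariant-interval argument for the turning period likewise fills in what the paper dismisses in one line.

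Item (3), however, has a genuine gap. You claim that applying item (1) to the time-reversed corner series yields $\tau_{n-1}\cB^-(x_n)\sim 1/(N-n)$ with ``no new estimate required.'' But $\cB^-(x_n)$ in (\ref{cBt}) is the pre-collision curvature of the \emph{forward-iterated unstable} front, so $\tau_{n-1}\cB^-(x_n)=\lambda_{n-1}/(1+\lambda_{n-1})$: this is exactly the quantity item (3) asserts something about, and time reversal cannot deliver it from item (1), because reversal conjugates $\cF$ to $\cF^{-1}$ and therefore carries unstable fronts of the original trajectory to \emph{stable} fronts of the reversed one, not to its unstable fronts; your intermediate claim is circular. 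What reversibility legitimately transfers is only the geometric data: during the exit period, with $m=N-n$, one has $\kappa_n\approx D/m^2$ and $\rho_n=\tau_n/\tau_{n-1}=1+B/m+\cdots$, where now the $1/m$ term enters with a \emph{positive} sign because the free paths grow on the way out. Hence the exit recursion is structurally different from the entry one: repeating your fixed-point computation, the constant must solve $A^2-A(B-1)=D$, giving $\tfrac{\beta}{2\beta-1}$ rather than $\tfrac{\beta-1}{2\beta-1}$; moreover the linearization factor is $(1+B/m)(1-2A/m)=1+\cO(m^{-2})$ (since $B=2A$ here), so the exit recursion is only marginally stable and the conclusion cannot be borrowed from the attracting entry-period analysis. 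It must be re-proved by a separate induction run in decreasing $m$, seeded by $\lambda_{N_3}\sim 1/N$ from item (2) --- which is exactly what the paper's Lemma \ref{LmB2} does. Your sub/supersolution machinery does adapt to this flipped-sign recursion with minor changes (the gain per step becomes $\pm\bigl(A^2-A(B-1)-D\bigr)/m^2$), so the repair is short, but as written your deduction of item (3) does not stand.
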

Item (2) follows directly from Proposition \ref{prop3}(2). This also implies that the total expansion factor during the interval $[N_1,N_3]$ is of order $\cO(1)$. Thus we can ignore it when calculation the expansion factors. We will prove two lemmas below corresponding to item (1) and item (3), respectively.

Using (\ref{bartaun}), we know that
$$\frac{2\tau_n\cK_n}{\sin\gamma_n}=\frac{D}{n^2}+\cO(\alpha_n^3)$$ where $D=\frac{2\bar a}{(2+\bar a)^2}$,
and by (\ref{ratiotau}), $$\frac{\tau_{n+1}}{\tau_{n}}=1-\frac{B}{n}+\cO(\alpha_n^2),$$ where $B=\frac{4}{2+\bar a}=\frac{2\beta}{2\beta-1}.$

Now we use the relation (\ref{cBXm}) to  get the estimation for $\lambda_n$.\begin{lemma} \label{LmB}
For all $1\leq m < N_1$ we have
\beq \label{cBmain}
   \lambda_{m} \geq
   A\biggl[ m + C_3 \ln m
   + C_4\biggr]^{-1},
\eeq
where $A>0$ satisfies $A^2 + A(B-1) = D$, hence
$A=\frac{\beta-1}{2\beta-1}$, and $C_3,C_4>0$ are sufficiently
large constants. Moreover, $$ \Biggl[\prod_{m=0}^{N_1-1} \bigl(1 + \tau(X_m)
     \cB(X_m)\bigr)\Biggr]
     >C N^{\frac{\beta-1}{2\beta-1}}, $$
     for some constant $C>0$.
\end{lemma}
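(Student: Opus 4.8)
The plan is to establish the pointwise bound (\ref{cBmain}) by induction on $m$ and then deduce the product estimate by summing $\ln(1+\lambda_m)$. First I would rewrite the recursion (\ref{cBXmtau}) in a form adapted to the induction. Writing $\phi(\lambda)=\lambda/(1+\lambda)$, note that $\frac{1}{1+1/\lambda_{m-1}}=\phi(\lambda_{m-1})$ is increasing in $\lambda_{m-1}$, and insert the two asymptotic inputs already proved: by (\ref{bartaun}) the curvature term equals $D/m^2+\cO(\alpha_m^3)$ with $D=\frac{2\bar a}{(2+\bar a)^2}$, and by (\ref{ratiotau}) the free-path ratio equals $1-\frac{B}{m}+\cO(\alpha_m^2)$ with $B=\frac{2\beta}{2\beta-1}$. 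Thus
\beq
\lambda_m=\frac{D}{m^2}+\Bigl(1-\tfrac{B}{m}+\cO(\alpha_m^2)\Bigr)\phi(\lambda_{m-1})+\cO(\alpha_m^3).
\eeq
Since (\ref{alphan}) gives $\alpha_m\sim\alpha_1 m^{-\frac{\beta-1}{2\beta-1}}$ with $\alpha_1\sim N^{-\frac{\beta}{2\beta-1}}$, every error term here is uniformly small and summable for $N$ large, a fact I would record at the outset.

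For the induction I set $g(m)=m+C_3\ln m+C_4$ and assume $\lambda_{m-1}\ge A/g(m-1)$. Monotonicity of $\phi$ lets me replace $\lambda_{m-1}$ by $A/g(m-1)$, so $\phi(\lambda_{m-1})\ge A/(g(m-1)+A)$. Expanding both sides of the desired inequality $A/g(m)\le \lambda_m$ in powers of $1/m$, the crucial point is that the coefficient of $1/m^2$ vanishes \emph{exactly}, because the defining relation $A^2+A(B-1)=D$ --- equivalently $A=\frac{\beta-1}{2\beta-1}$ --- is precisely the condition that $A/m$ be a self-consistent fixed profile for the recursion. After this cancellation the residual discrepancy is of order $(\ln m)/m^3$ together with the contributions $\cO(\alpha_m^2/m)$ and $\cO(\alpha_m^3)$; the favorable term produced by the $C_3\ln m$ correction in $g$ dominates these once $C_3$ is chosen large, and $C_4$ is then fixed large enough to absorb a finite initial range of $m$ (where one only needs $\lambda_m>0$, with $\lambda_1\gtrsim D$ coming from the curvature term). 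This closes the induction and gives (\ref{cBmain}) for all $1\le m<N_1$.

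For the product bound I would start from (\ref{verticalLa}) restricted to the entering period, which reads $\prod_{m=0}^{N_1-1}(1+\lambda_m)$. Using (\ref{cBmain}) and $\ln(1+t)\ge t-\tfrac12 t^2$,
\beq
\ln\prod_{m=0}^{N_1-1}(1+\lambda_m)\ge \sum_{m=1}^{N_1-1}\frac{A}{g(m)}-C\sum_{m\ge1}\frac{1}{g(m)^2}.
\eeq
The second sum converges (since $g(m)\ge m$), while $g(m)=m+C_3\ln m+C_4\sim m$ gives $\sum_{m=1}^{N_1-1}A/g(m)=A\ln N_1+\cO(1)=A\ln N+\cO(1)$, using $N_1\sim N$ from Proposition \ref{prop3}. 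Exponentiating yields $\prod_{m=0}^{N_1-1}(1+\lambda_m)\ge C N^{A}=C N^{\frac{\beta-1}{2\beta-1}}$, as claimed.

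The main obstacle is the inductive step: the leading $1/m^2$ terms cancel identically, so the whole estimate rests on controlling the next order, where the slowly-growing $\ln m$ discrepancy between $g(m)$ and $m$ interacts with the error terms inherited from (\ref{bartaun})--(\ref{ratiotau}). Pinning down admissible constants $C_3,C_4$ so that the sign works out uniformly in $N$ --- rather than the algebra of the cancellation itself --- is the delicate part.
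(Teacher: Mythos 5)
Your proposal is correct and follows essentially the same route as the paper's own proof: induction on $m$ with the ansatz $g(m)=m+C_3\ln m+C_4$, the exact cancellation at order $1/m^2$ enforced by the defining relation $A^2+A(B-1)=D$ (i.e. $A=\frac{\beta-1}{2\beta-1}$), with $C_3$ chosen large to dominate the residual $\cO((\ln m)/m^3)$ and $\cO(\alpha_m^2/m)$ discrepancies and $C_4$ handling the initial range, followed by summing $\ln(1+\lambda_m)\geq A/g(m)-\cO(g(m)^{-2})$ to get $A\ln N_1+\cO(1)=A\ln N+\cO(1)$ and exponentiating. No substantive differences from the paper's argument.
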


\begin{proof} We use induction on $m$. For $m=1$ the validity of
(\ref{cBmain}) is guaranteed by choosing $C_4$ large enough.
Assume that (\ref{cBmain}) is valid for some $m <N_1$. Due to
(\ref{cBXmtau})  it is enough to verify
\begin{align*}
\frac{D}{\Bigl[ m + C_1' \ln m +
   + C_2'\Bigr]^{2}}+ \frac{A(1-\frac{B}{m}+\cO(\alpha_m^2))}{ A+ m + C_3 \ln m + C_4}   >
   \frac{A}{ m+1 + C_3 \ln (m+1)
   + C_4},
\end{align*}
provided $C_3,C_4>0$ are large enough. Here
$B=\frac{4}{2+\bar a}$.
It is easy to see that
\begin{align*}
   \frac{A}{ m+1 + C_3 \ln (m+1) +
    C_4}
   -\frac{A(1-\frac{B}{m}+\cO(\alpha_m^2))}{ A + m + C_3 \ln m +
    C_4} < \frac{A^2-A(B-1)}{\Theta},
\end{align*}
where $\Theta$ denotes the product of the two denominators. Thus
it is enough to verify
$$
   \frac{D}{\Bigl[ m + C_1' \ln m + C_2' \Bigr]^{2}} > \frac{A^2+A(B-1)}{\Theta}.
$$
We recall that $A^2 + A(B-1) = D$, since $A=\frac{\bar a}{2+\bar a}$. Thus it is enough to verify
\beq \label{Theta}
   \Theta > \biggl[ m + C_1' \ln m + C_2'\biggr]^{2}.
\eeq
The leading term $m^2$ appears on both sides and cancels out.
Keeping only the largest non-cancelling terms on both sides of
(\ref{Theta}) we obtain
$
  C_3m\ln m
  >
  C_1'm\ln m ,
$
which can be ensured by choosing $C_3$  large enough.
This implies (\ref{Theta}).
Note that due to (\ref{cBXmtau}), we
have
$$
     \ln \Biggl[\prod_{m=0}^{N_1-1} \bigl(1 + \tau(X_m)
     \cB(X_m)\bigr)\Biggr]
     >\sum_{m=1}^{N_1} \biggl[\frac{A}{m} +
     \frac{2C_3\,\ln m}{m^2}\biggr],
$$
with a sufficiently large constant $C_3>0$. Therefore,
$$
     \ln \Biggl[\prod_{m=0}^{N_1-1} \bigl(1 + \tau(X_m)
     \cB(X_m)\bigr)\Biggr]
     >A\ln N_1 + \text{const} > A\ln N +\text{const}.
$$
 Lastly, note
that $A =\frac{\beta-1}{2\beta-1}$, which completes the proof of
the lemma.

\end{proof}
Next we consider the expansion factor for $N_3<n<N$.
\begin{lemma} \label{LmB2} For any $n\in [N_3+1, N-1]$, we denote $m=N-n+1$.
Then we have
\beq \label{cBmain3}
   \lambda_{N-m} \geq
   A\biggl[ m + C_3 \ln m
   + C_4\biggr]^{-1},
\eeq
where $A>0$ satisfies $A^2 - A(B-1) = D$, with $B=\frac{4}{2+\bar a}$, hence
$A=\frac{\beta}{2\beta-1}$, and $C_3,C_4>0$ are sufficiently
large constants. Moreover, $$ \Biggl[\prod_{n=N_3}^{N-1} \bigl(1 + \lambda_{n}\bigr)\Biggr]
     >C N^{\frac{\beta}{2\beta-1}}, $$
     for some constant $C>0$.
\end{lemma}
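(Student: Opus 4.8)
The plan is to mirror the inductive argument of Lemma~\ref{LmB}, but run it on the exiting half of the corner series after a time-reversal reparametrization; this will flip one sign in the governing quadratic and thereby raise the exponent from $\frac{\beta-1}{2\beta-1}$ to $\frac{\beta}{2\beta-1}$. Concretely, I set $\tilde\lambda_m := \lambda_{N-m}$ and rewrite the recursion (\ref{cBXmtau}) at the index $n = N-m+1$:
$$\tilde\lambda_{m-1} = \frac{2\tau_{N-m+1}\cK_{N-m+1}}{\sin\gamma_{N-m+1}} + \frac{\tau_{N-m+1}}{\tau_{N-m}}\cdot\frac{1}{1+1/\tilde\lambda_m},$$
so that $\tilde\lambda_{m-1}$ is determined by $\tilde\lambda_m$. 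This is the natural orientation for an induction in $m$ run \emph{downward}, from the turning period toward the grazing exit.

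The inputs I would extract from the symmetry of the corner series (the time-reversibility remark at the end of Section~3, which transports every entering-period asymptotic to the exiting period under $m = N-n$) are the two expansions $\frac{2\tau_{N-m+1}\cK_{N-m+1}}{\sin\gamma_{N-m+1}} = D/(m-1)^2 + \cO(\alpha^3)$ and $\frac{\tau_{N-m+1}}{\tau_{N-m}} = 1 + \frac{B}{m} + \cO(\alpha^2)$, with $D = \frac{2\bar a}{(2+\bar a)^2}$ and $B = \frac{4}{2+\bar a} = \frac{2\beta}{2\beta-1}$ as before. The essential point I would stress is that the free-path ratio now carries $+B/m$ rather than the $-B/m$ of (\ref{ratiotau}): in the exiting period the free paths grow, so the reciprocal correction appears. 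Substituting the ansatz $\tilde\lambda_m \approx A/m$ and matching the $1/m^2$ coefficients then produces $A^2 - A(B-1) = D$ in place of $A^2 + A(B-1) = D$, and one checks directly that $A = \frac{\beta}{2\beta-1}$ is its positive root.

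With this in hand the induction runs exactly as in Lemma~\ref{LmB}. The base case is taken at $m = N-N_3 \sim N$, where the turning-period estimate $\lambda_n \sim 1/N$ for $N_1 < n < N_3$ (item (2) of the Proposition above) gives $\lambda_{N_3} \sim 1/(N-N_3)$, so $\tilde\lambda_{N-N_3} \geq A\,[(N-N_3) + C_3\ln(N-N_3) + C_4]^{-1}$ holds once $C_4$ is chosen large. For the inductive step, writing $g(m) = m + C_3\ln m + C_4$ and assuming $\tilde\lambda_m \geq A/g(m)$, monotonicity of $t \mapsto (1+1/t)^{-1}$ reduces the claim $\tilde\lambda_{m-1} \geq A/g(m-1)$ to the verification inequality
$$\frac{D}{(m-1)^2} + \Bigl(1+\frac{B}{m}\Bigr)\frac{A}{A + g(m)} \geq \frac{A}{g(m-1)}.$$
The leading $A/m$ terms cancel, the relation $A^2 - A(B-1) = D$ cancels the principal $1/m^2$ terms, and, just as in Lemma~\ref{LmB}, keeping the largest surviving terms reduces everything to an inequality of the form $C_3\,m\ln m > C_1'\,m\ln m$, secured by taking $C_3$ large; the errors $\cO(\alpha^2),\cO(\alpha^3)$ are, by Proposition~\ref{prop3}, polynomially small in $N$ and in particular dominated by the $m^{-2}\ln m$ slack. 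The hard part is precisely this sign bookkeeping: one must be certain that the $+B/m$ correction together with the downward orientation yields the modified quadratic, and that the logarithmic slack still points in the correct direction.

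Finally, the product bound follows by summation. Converting via $\lambda_n = \tilde\lambda_{N-n}$ and using $\tilde\lambda_m \geq A/g(m)$,
$$\ln\prod_{n=N_3}^{N-1}(1+\lambda_n) = \sum_{m=1}^{N-N_3}\ln\bigl(1+\tilde\lambda_m\bigr) \geq \sum_{m=2}^{N-N_3}\frac{A}{g(m)} - \cO(1) \geq A\ln(N-N_3) - \const \geq A\ln N - \const,$$
since $N-N_3 \sim N$ by Proposition~\ref{prop3}(1). Exponentiating gives $\prod_{n=N_3}^{N-1}(1+\lambda_n) \geq C\,N^{A} = C\,N^{\frac{\beta}{2\beta-1}}$, as claimed.
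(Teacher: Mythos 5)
Your proof is correct and follows essentially the same route as the paper's: the time-reversed reindexing $m=N-n+1$, a downward-in-$m$ induction on the recursion (\ref{cBXmtau}) with the exit-period free-path ratio $1+\frac{B}{m}$ replacing $1-\frac{B}{m}$, the modified quadratic $A^2-A(B-1)=D$ with positive root $A=\frac{\beta}{2\beta-1}$, and summation of $A/g(m)$ to obtain the product bound $\geq C N^{\frac{\beta}{2\beta-1}}$. The one point where you go beyond the paper is in spelling out a base case at $m\sim N-N_3$ (the paper leaves it implicit); note that there a large $C_4$ cannot absorb the implied constant in $\lambda_{N_3}\sim 1/(N-N_3)$, but since the contribution of the range $m\sim N$ to the product is only a bounded factor, this does not affect the stated conclusion.
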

\begin{proof}
By the time reversibility, we denote $m=N-n+1$.
Assume that (\ref{cBmain3}) is valid for some $m <N_1$. Due to
(\ref{cBXmtau}) and the time reversibility, it is enough to verify
\begin{align*}
\frac{D}{\Bigl[ m + C_1' \ln m
   + C_2'\Bigr]^{2}}+ \frac{A(1+\frac{B}{m}+\cO(\alpha_{N-m}))}{ A+ m + C_3 \ln m +  C_4}   >
   \frac{A}{ m-1 + C_3 \ln (m-1)
   + C_4},
\end{align*}
provided $C_3,C_4>0$ are large enough.
Thus one can check that a sufficient condition for the above inequality is $A^2-A(B-1)=D$, which implies that $A=\frac{\beta}{2(\beta-1)}$, as we claimed.

Note that  we
have
$$
     \ln \Biggl[\prod_{n=N_3}^{N-1} \bigl(1 + \lambda_n\bigr)\Biggr]
     >\sum_{m=1}^{N-N_3} \biggl[\frac{A}{m} +
     \frac{2C_3\,\ln m}{m^2}\biggr],
$$
with a sufficiently large constant $C_3>0$. Therefore,
$$
     \ln \Biggl[\prod_{n=N_3}^{N-1} \bigl(1 + \lambda_n\bigr)\Biggr]
     >A\ln (N-N_3) + \text{const} > A\ln N +\text{const}.
$$
 Lastly, note
that $A =\frac{\beta}{2\beta-1}$, which completes the proof of
the lemma.
\end{proof}
After the last collision, the particle leaves the cusp and
flies back to the boundary $\Gamma_3$. According to (\ref{cBmain3}), during the exit period, $\lambda_{n}\sim {(N-n+1)}^{-1}$, and $$\tau_n\sim \frac{\alpha_n^{\frac{\beta}{\beta-1}}}{(N-n+1)\alpha_n}\sim N^{-\frac{\beta}{(2\beta-1)(\beta-1)}} (N-n+1)^{-\frac{2\beta}{2\beta-1}}.$$ Thus \beq\label{cBN1}\cB(x_n)\sim \lambda_n/\tau_n\sim N^{\frac{\beta}{(2\beta-1)(\beta-1)} } (N-n+1)^{\frac{1}{2\beta-1}}.\eeq
Although the last collision we have $\tau_N\sim 1$, but (\ref{cBN1}) still holds for $n=N$, as it was derived from previous collisions before and at $x_N$, which does not depend on $\tau_N$. Thus the expanding factor contributed by the  collisions at $x_N$ satisfies
$$1+\tau_N\cB(x_N)\sim N^{\frac{\beta}{(2\beta-1)(\beta-1)}}, $$
under assumption (\ref{4.8}).

On the other hand, for the last collision, when $\gamma_N$ fails (\ref{4.8}), we have
\begin{align*}
\frac{|D_{x_N}\cF (d x_N)|_p}{|dx_N|_p}&=1+\tau(x_N)\cB(x_N)=1+\tau(x_N)\left(\cB^-(x_N)+\frac{2\cK(s_N)}{\cos\varphi_N}\right)\\
&\sim  \tau_{\min}\cB^-(x_N)+\tau_{\min}\frac{2\cK(s_N)}{\cos\varphi_N}\sim N^{\frac{\beta}{(2\beta-1)(\beta-1)} }+\frac{\alpha_1^{\frac{\beta-2}{\beta-1}}}{\cos\varphi_N} \\
&\sim N^{\frac{\beta}{(2\beta-1)(\beta-1)} }\left(1+\frac{N^{-\frac{\beta(\beta-3)}{(2\beta-1)(\beta-1)}} } {\cos\varphi_N}\right),
\end{align*}
where we have used the fact that $\cB^-(x_N)>0$, and the free path between $x_N$ and $F x$ is uniformly bounded away from $\tau_{\min}$.

Combining the above facts, we have
\begin{align*}
\Lambda(x)\sim N^{\frac{\beta-1}{2\beta-1}}\cdot N^{\frac{\beta}{2\beta-1}}\cdot N^{\frac{\beta}{(
2\beta-1)(\beta-1)}}=N^{1+\frac{\beta}{(
2\beta-1)(\beta-1)}}.\end{align*}

For points $x\in M_N$, when $\gamma_1$ fails to satisfy (\ref{4.8}), the expression between the first and second collision is
\begin{align*}
\frac{|D_{x_1}\cF (d x_1)|_p}{|dx_1|_p}&=1+\tau(x_1)\cB(x_1)=1+\tau(x_1)\left(\cB^-(x_1)+\frac{2\cK(s_1)}{\cos\varphi_1}\right)\\
&=1+\tau(x_1)\left(\frac{2\cK(s_1)}{\cos\varphi_1}+\frac{1}{\tau(x)+\frac{1}{\cB^-(x)+\frac{2\cK(r)}{\cos\varphi}}}\right)\\
&\geq 1+\tau(x_1)\left(\frac{2\cK(s_1)}{\cos\varphi_1}+\frac{1}{\tau_{\max}+\frac{1}{2\cK_{\min}}}\right)\\
&=1+\frac{2\tau(x_1)\cK(s_1)}{\cos\varphi_1}+\cO(\tau(x_1))\sim 1+\frac{N^\frac{-\beta}{2\beta-1}}{\cos\varphi_1},\end{align*}
where we have used the fact that $\cB^-(x)>0$ for any unstable vector in $\cC^u(x)$, and our estimations on  $\alpha_1$ and $\gamma_1$ in Proposition \ref{prop3}.

\medskip\noindent\textbf{Acknowledgement}.
This paper is written in memory of   Professor Nikolai Chernov. It was him who lead me to the beautiful research field of chaotic billiards.
The author  is also partially supported by NSF (DMS-1151762) and   a grant from the Simons
Foundation (337646, HZ).


\begin{thebibliography}{99}
\bibitem{BCD1}  Balint P.,  Chernov N. and Dolgopyat D. {\it Limit theorems for dispersing billiards with cusps},
Communications in Mathematical Physics, 308 (2011), 479-510.
\bibitem{BCD2}  Balint P.,  Chernov N. and Dolgopyat D. {\it Convergence of moments for dispersing billiards with cusps}, preprint.
\bibitem{BSC90}
Bunimovich L. A.; Sinai, Ya. G. \& Chernov, N. I. {\it Markov
partitions for two-dimensional hyperbolic billiards,} Russian
Math. Surveys {\bf 45} (1990) 105--152.

\bibitem{BSC91}
Bunimovich L. A.; Sinai, Ya. G. \& Chernov, N. I., {\it
Statistical properties of two-dimensional hyperbolic billiards,}
Russian Math. Surveys {\bf 46} (1991) 47--106.


 \bibitem{CH96}  Chernov N. and  Haskell C. \emph{Nonuniformly
hyperbolic K-systems are Bernoulli}, Ergod. Th. Dynam. Sys.
\textbf{16} (1996), 19--44.
\bibitem{C99}
Chernov, N., {\it Decay of correlations in dispersing billiards},
J. Statist. Phys. {\bf 94} (1999), 513--556.

\bibitem{CD}
 Chernov N. and  Dolgopyat D. {\it Hyperbolic billiards and statistical physics}, Proceedings of International Congress of Mathematicians (Madrid, Spain, August 2006), Vol. II, Euro. Math. Soc., Zurich, 2006, pp. 1679-1704.
\bibitem{CD09}
 Chernov N. and  Dolgopyat D. {\it Anomalous current in periodic Lorentz gases with infinite horizon},
Uspekhi Mat. Nauk, 64:4 (2009), 73-124.

\bibitem{CM}
Chernov, N. and Markarian, R.\emph{Chaotic Billiards}, Mathematical
Surveys and Monographs, \textbf{127}, AMS, Providence, RI, 2006.
\bibitem{CM05} Chernov N and Markarian R, \emph{Dispersing billiards with cusps: slow decay of
correlations}, Communications in Mathematical Physics,
 {\bf 270}, 727--758, (2007).


\bibitem{CY} Chernov, N. and  Young L.-S., {\it Decay of correlations for Lorentz gases and hard balls}, in: Hard Ball Systems and the Lorentz Gas, Ed. by D. Szasz,
Encyclopaedia of Mathematical Sciences {\bf 101}: 89--120, Springer, 2000.
\bibitem{CZ}
 Chernov, N. and  Zhang H.-K.,  Billiards with polynomial mixing
rates .   {\em Nonlineartity}, 2005, {\bf 4}: 1527--1553
\bibitem{CZ2}
 Chernov, N. and  Zhang H.-K.,  A family of chaotic billiards with
variable mixing rates. {\em Stochastics and Dynamics}, 2005, {\bf
5}: 535-553

\bibitem{CZ3}
 Chernov, N. and  Zhang H.-K.,  \emph{Improved
 estimates for correlations in billiards},  Communications in Mathematical Physics,  {\bf 277}  (2008), 305--321.
 \bibitem{CZ09} Chernov, N. and  Zhang H.-K., {\em On Statistical Properties of Hyperbolic Systems with Singularities}.Journal of Statistical Physics, {\bf 136} (2009),  615--642.
 \bibitem{DSzV08a}   Dolgopyat D., Sz\'{a}sz D. and  Varj\'{u} T., \emph{Recurrence properties of planar Lorentz process},  Duke Math.
J. \textbf{142} (2008), 241-281.

\bibitem{DSzV08b}  Dolgopyat D., Sz\'{a}sz D. and Varj\'{u} T.,\emph{ Limit theorems for locally perturbed Lorentz processes}, preprint.
 \bibitem{GO}   Gallavotti G. and  Ornstein D., \emph{Billiards and Bernoulli scheme}, Commun. Math.
Phys. \textbf{38} (1974), 83--101.

\bibitem{M04}  Markarian R., {\it Billiards with polynomial decay
of correlations}, Er. Th. Dynam. Syst. {\bf 24} (2004), 177--197.
\bibitem{OW98}  Ornstein D. and  Weiss B. \emph{On the Bernoulli nature
of systems with some hyperbolic structure}, Ergod. Th. Dynam. Sys.
\textbf{18} (1998), 441--456.
\bibitem{Sin70}
Sinai Ya. G.,  Dynamical systems with elastic reflections.
Ergodic properties of diepersing billiards [J]. {\em Russian Math.
Surveys,} {\bf 25} (1970) 137--189.
\bibitem{SC87}  Sinai Ya. G.  and  Chernov N., {\em Ergodic
properties of some systems of two-dimensional discs and
three-dimensional spheres }, Russian Math. Surveys {\bf 42}
(1987), 181--207.



\bibitem{Wo85} Wojtkowski M., Invariant families of cones and Lyapunov exponents, Ergod. Th. Dynam. Syst. {\bf{ 5}} (1985), 145--161.
	
\bibitem{Y98} Young L.-S.,  Statistical properties of systems
with some hyperbolicity including certain billiards [J]. {\em Ann.
Math.}, 1998, {\bf 147}, 585--650.
\bibitem{Z12}  Zhang H.-K. \emph{Free path for Lorentz gas with flat points}. {\em Continuous and Discrete Dynamical Systems}, 2012, {\bf 32}, 4445-4466.
\bibitem{Z15} Zhang H.-K., \emph{Decay of correlations for billiards with flat points I: channel effect}, submitted.

\end{thebibliography}
\end{document}